%& -shell-escape
%!TeX program = xelatex
%!TEX encoding = UTF-8 Unicode 
\documentclass[11pt, a4paper]{article}
%\font\emailfont=cmtt10

\usepackage{amsmath, amsfonts,amscd,flafter,epsf,amssymb,epsfig,subfigure,bm,url, mathrsfs, amsthm, comment}
\usepackage[margin=.9in]{geometry}

\allowdisplaybreaks

\usepackage{mathtools}

% ÉèÖÃtikz»·Ÿ³
% ==========================================================
\usepackage{pgfplots,tikz}
\usepackage{xkeyval,tkz-base,tikz-3dplot}
\usepackage{tqft}
\usetikzlibrary{arrows, calc, shapes, automata, backgrounds, petri, positioning,fadings,decorations.pathreplacing, spy, intersections}
\usetikzlibrary{external}
\usetikzlibrary{fillbetween}
\usetikzlibrary{decorations.markings}
\usetikzlibrary{lindenmayersystems}
\tikzset{
	external/system call={
	xelatex \tikzexternalcheckshellescape
	-halt-on-error -interaction=batchmode --shell-escape --enable-write18
	-jobname "\image" "\texsource"}
}
\usepgfplotslibrary{external} 
\tikzexternalize[
	mode=list and make,
	prefix=figures_tikz/,
	figure name=sec\thesection_no_,
	%up to date check=simple
	up to date check=md5,
	optimize=false
] % activate external

% ÓÃÀŽ»­ÇúÏß£¬Ÿö¶šœÇ¶È
\def\nextAngle{0}
\tikzset{
	next angle/.style={
		in=#1+180,
		out=\nextAngle,
		prefix after command= {\pgfextra{\def\nextAngle{#1}}}
	},
	start angle/.style={
		out=#1,
		nangle=#1,
	},
	nangle/.code={% used only internally
		\def\nextAngle{#1}
	}
}

% tikzcd²»ÄÜexternalize¡£Èç¹û²»ÕâÑù×ö£¬»á³öŽí
\usepackage{etoolbox}
\AtBeginEnvironment{tikzcd}{\tikzexternaldisable}
\AtEndEnvironment{tikzcd}{\tikzexternalenable}

% ==========================================================

\newtheorem{thm}{Theorem}[section]
\newtheorem{lemma}[thm]{Lemma}
\newtheorem{prop}[thm]{Proposition}
\newtheorem{cor}[thm]{Corollary}
\newtheorem{conj}[thm]{Conjecture}

\theoremstyle{definition}

\theoremstyle{remark}

\newcommand{\bbh}{\mathbb{H}}

\newcommand{\cald}{\mathcal{D}}
\newcommand{\caln}{\mathcal{N}}
\newcommand{\calp}{\mathcal{P}}
\newcommand{\calx}{\mathcal{X}}
\newcommand{\caly}{\mathcal{Y}}

% 微分符号
\makeatletter \newcommand\dd[1]{\ensuremath{%
  \mathop{}\!\mathrm{d}#1\@ifnextchar\dd{\!}{}}}
\makeatother

\makeatletter
\DeclareFontFamily{U}{tipa}{}
\DeclareFontShape{U}{tipa}{m}{n}{<->tipa10}{}
\newcommand{\arc@char}{{\usefont{U}{tipa}{m}{n}\symbol{62}}}%

\newcommand{\arc}[1]{\mathpalette\arc@arc{#1}}

\newcommand{\arc@arc}[2]{%
  \sbox0{$\m@th#1#2$}%
  \vbox{
    \hbox{\resizebox{\wd0}{\height}{\arc@char}}
    \nointerlineskip
    \box0
  }%
}
\makeatother

\newcommand{\abs}[1]{\left| #1 \right|}
\newcommand{\gauss}[1]{\left[ #1 \right]}
\newcommand{\set}[1]{\left\{ #1 \right\}}
\newcommand{\bracket}[1]{\left( #1 \right)}

%\author{
%  LastName1, FirstName1\\
%  \texttt{first1.last1@xxxxx.com}
%}

\author{
	Wujie Shen\\
	\texttt{shenwj22@mails.tsinghua.edu.cn}
}

%\date{%
%	Department of Mathematics, Beijing 100084, P.R. China 
%	\\[2ex]%
%	\today
%}

\title{Nonsimple closed geodesics with given intersection number on hyperbolic surfaces}

\pgfplotsset{compat=1.18} 

\begin{document}

\maketitle

\begin{abstract}
We prove that the minimal length of a closed geodesic with self-intersection number $k$ on any finite-type hyperbolic surface is $2\cosh^{-1}(1+2k)$ for $k>1750$. This improves the previously known threshold $k > 10^{13350}$ established in \cite{Y2801}. Our proof is independent of the methods in \cite{Y2801}.
\end{abstract}

\section{Introduction}

The study of nonsimple closed geodesics on hyperbolic surfaces plays a fundamental role in two-dimensional hyperbolic geometry, spectral theory, and Teichmüller theory. A natural question arises: For a hyperbolic surface, let $M_k$ denote the minimal length among all closed geodesics with exactly $k$ self-intersections. Does $M_k \to \infty$ as $k \to \infty$, and if so, what is the precise asymptotic behavior of $L_k$?

There has been extensive work on this problem. Hempel \cite{H2797} first established a universal lower bound $2\log(1+\sqrt{2})$ for nonsimple closed geodesics, which Yamada \cite{Y2799} later improved to the sharp bound $4\log(1+\sqrt{2}) = 2\cosh^{-1}(3)$, proving it is attained on ideal pairs of pants. Basmajian \cite{B2794} proved that the lengths of nonsimple geodesics grow arbitrarily large with the self-intersection number (\cite[Corollary 1.2]{B2794}). For the specific case of hyperbolic pairs of pants, Baribaud \cite{B2795} computed exact minimal lengths for geodesics with prescribed self-intersection numbers.

Let $\omega$ be either a closed geodesic or a geodesic segment on a hyperbolic surface, with $\ell(\omega)$ denoting its length and $|{\omega\cap\omega}|$ its self-intersection number. Here $|{\omega\cap\omega}|$ counts transverse self-intersections with multiplicity, where each intersection point having $n$ preimages contributes $\binom{n}{2}$ to the total count.

For a fixed hyperbolic surface, Basmajian \cite{B2803} proved that any $k$-geodesic (a closed geodesic with exactly $k$ self-intersections) has length at least $C\sqrt{k}$, where $C > 0$ is a constant depending only on the hyperbolic structure. Later, Hanh Vo \cite{Y2805} established the exact minimal length of $k$-geodesics for all sufficiently large $k$ (depending on the surface) in the case of hyperbolic surfaces with at least one cusp.

For a hyperbolic surface $X$, define $I(k,X)$ as the minimal self-intersection number among all shortest closed geodesics with at least $k$ self-intersections. By definition, $I(k,X) \geq k$. Erlandsson and Parlier \cite{EP2802} proved that $I(k,X)$ is bounded above by a function depending only on $k$ that exhibits linear growth as $k \to \infty$. However, to the best of our knowledge, no hyperbolic surface $X$ is known to satisfy $I(k,X) = k$ for all $k \geqslant 1$.

Let $M_k$ be the infimum of lengths of geodesics of self-intersection number at least $k$ among all finite-type hyperbolic surfaces, i.e. metric complete hyperbolic surfaces without boundary, and have finite number of genuses and cusps. Basmajian showed (\cite[Corollary 1.4]{B2803}) that
\begin{equation}\label{eqn:growth_bas}
\tfrac12\log \frac{k}2\leqslant M_k \leqslant 2\cosh^{-1}(2k+1)
\end{equation}
He also showed that $M_k$ is realized by a $k$-geodesic on some hyperbolic surface. 

\begin{conj}
When $k\geqslant1$,
\begin{equation}\label{eqn:length_asymptotic}
M_k=2\cosh^{-1}(1+2k)=2\log(1+2k+2\sqrt{k^2+k})
\end{equation}
and the equality holds when $\Gamma$ is a corkscrew geodesic(See definition below) on a thrice-punctured sphere.
\end{conj}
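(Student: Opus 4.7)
The upper bound $M_k \leq 2\cosh^{-1}(1+2k)$ is already part of Basmajian's estimate \eqref{eqn:growth_bas}, so proving the conjecture reduces to (i) verifying that the corkscrew geodesic $\Gamma$ on the thrice-punctured sphere $\Sigma_{0,3}$ achieves length exactly $2\cosh^{-1}(1+2k)$ with $\abs{\Gamma\cap\Gamma}=k$, and (ii) showing the matching lower bound $M_k\geq 2\cosh^{-1}(1+2k)$ on every finite-type hyperbolic surface. Task (i) is a direct calculation: realize $\pi_1(\Sigma_{0,3})$ as the principal congruence subgroup $\Gamma(2)\subset\mathrm{PSL}_2(\bbz)$ with parabolic generators $a=\left(\begin{smallmatrix}1&2\\0&1\end{smallmatrix}\right)$ and $b=\left(\begin{smallmatrix}1&0\\2&1\end{smallmatrix}\right)$, and take the corkscrew of index $k$ to be the free-homotopy class of $a^k b$. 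A short computation gives $\mathrm{tr}(a^k b)=2+4k$ and hence $\ell(\Gamma)=2\cosh^{-1}(1+2k)$. The axis of $a^k b$ is the semicircle of radius $\sqrt{k^2+k}$ centered at $z=k$, and counting its transverse crossings with the translates $a^i\cdot\mathrm{axis}$ for $1\leq\abs{i}\leq k$ (precisely the indices with $2\abs{i}<2\sqrt{k^2+k}$) modulo the $\pi_1$-action delivers the intersection count $\abs{\Gamma\cap\Gamma}=k$.

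For (ii), let $\gamma$ be a closed geodesic on $X=\bbh^2/\Gamma$ realizing $M_k$ with $\abs{\gamma\cap\gamma}\geq k$, fix a lift $\tilde\gamma\subset\bbh^2$ and a fundamental arc $\sigma\subset\tilde\gamma$ of length $L=\ell(\gamma)$. Self-intersections of $\gamma$ on $X$ correspond bijectively to $\langle h_\gamma\rangle$-orbits of crossings $\tilde\gamma\cap g\tilde\gamma$ for $g\in\Gamma\setminus\langle h_\gamma\rangle$, and since the two branches of each self-intersection on $X$ each produce a crossing in $\sigma$, a standard double-counting yields at least $2k$ distinct translates $g_1\tilde\gamma,\dots,g_{2k}\tilde\gamma$ whose axes cross $\sigma$ transversely.

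The principal obstacle is then converting the crossing count into the sharp estimate $\cosh(L/2)\geq 1+2k$. One natural approach is to single out the two outermost crossings along $\sigma$, corresponding to translates $g_-\tilde\gamma$ and $g_+\tilde\gamma$, and bound $L$ below by the hyperbolic distance (along $\tilde\gamma$) between these extremal crossings. A hyperbolic-trigonometric analysis in the ideal polygon spanned by $\tilde\gamma,g_-\tilde\gamma,g_+\tilde\gamma$, combined with a monodromy-type argument that leverages the $2k-2$ intermediate crossings, should yield a lower bound on the translation length of $g_+g_-^{-1}$ matching $2\cosh^{-1}(1+2k)$. The equality case is expected to force all the crossing translates to become mutually asymptotic (parabolic degeneration), thereby pinning the extremal surface to a thrice-punctured sphere and the extremal geodesic to a corkscrew. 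Making this parabolic-degeneration argument quantitatively tight is where most of the work will go, and presumably accounts for the threshold $k>1750$ in the abstract: the trigonometric inequalities close up cleanly only once $k$ is large enough for the outermost crossings to dominate the geometry, with small values of $k$ (starting from Yamada's $k=1$ case) requiring either separate handling or appeal to the earlier bound of \cite{Y2801}.
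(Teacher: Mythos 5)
This statement is labeled as a \emph{Conjecture} in the paper, and the paper does not prove it in the generality you are attempting. What the paper actually establishes (Theorem~\ref{thm:main}) is that Conjecture~1.1 holds when $k>1750$; the authors explicitly say that no surface $X$ is known for which $I(k,X)=k$ for all $k$, and that the range $k\leqslant1750$ would require a separate (e.g.\ computer-assisted) verification. So a ``proof'' of the conjecture as stated is not on offer in the paper, and your write-up should not present itself as one.

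On the content of your sketch: part~(i) is fine as far as it goes. The computation $\mathrm{tr}(a^kb)=2+4k$ in $\Gamma(2)$, hence $\ell=2\cosh^{-1}(1+2k)$, is correct, and this is indeed the upper-bound side, already contained in Basmajian's estimate~\eqref{eqn:growth_bas}; you would still need to verify carefully that the self-intersection count of the corkscrew is exactly $k$ rather than merely bounded by $k$, but that is a manageable exercise. The real issue is part~(ii). As written it is a plan, not a proof: ``should yield a lower bound\dots matching $2\cosh^{-1}(1+2k)$,'' ``is expected to force,'' ``presumably accounts for the threshold.'' None of the decisive estimates are produced. In particular, the step from ``at least $2k$ crossing translates along $\sigma$'' to the sharp inequality $\cosh(L/2)\geqslant1+2k$ is precisely where the difficulty lies, and you have not identified a mechanism that makes the crossings cost enough length. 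There is no a priori reason that the ``outermost crossings'' control the translation length of $g_+g_-^{-1}$ in a way that closes up to the exact constant; many short translates can cluster near a thin collar without accounting for the length you need. This is, in fact, the phenomenon the paper has to work hard to control.

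The paper's actual argument for $k>1750$ takes a genuinely different route. It performs a thick--thin decomposition $\Sigma=\caln_T\sqcup\caln_t$ adapted to the set $\calx\cup\caly$ of short geodesics and cusps (Section~2), proves a uniform injectivity-radius lower bound on the thick part (Lemma~\ref{lemma:inj_radius_outside_nbhds}), and bounds the thick-part self-intersections by covering $\Gamma_1$ with $O(L_1)$ short arcs each pair of which meets at most once (Theorem~2.7, giving $\abs{\Gamma_1\cap\Gamma_1}<\tfrac12(\tfrac{25}{12}L_1+m)^2$). The thin-part self-intersections are then controlled via \emph{winding numbers} of the arcs $\gamma_p$ around the short geodesics or cusps (Lemmas~\ref{lemma_31}, \ref{lemma_32}, and Theorem~3.4), after which a careful convexity optimization over the parameters $(m,m_1,m_2,L_2',L_2'')$ in Section~4 produces the bound $4k+2<e^{L/2}$, i.e.\ $L\geqslant2\cosh^{-1}(2k+1)$, once $k>1750$; the case $m=1$ is handled separately by a corkscrew-recognition argument. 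None of this resembles your ``outermost-crossings plus parabolic degeneration'' plan. If you want to pursue your approach, the honest framing is: part~(i) is a routine verification, and part~(ii) is an open problem for small $k$ and requires, for large $k$, a quantitative substitute for the paper's thick--thin analysis that you have not supplied.
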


In \cite[Theorem 1.1]{Y2800} Shen-Wang improved the lower bound of $M_k$, that $M_k$ has explicit growth rate $2\log{k}$, and for a closed geodesic of length $L$, the self intersection number is no more than $9L^2e^{\frac{L}{2}}$. The exact value for $M_k$ for sufficiently large $k$ is computed in \cite[Theorem 1.1]{Y2801}:

\begin{thm}\label{thm:main00}
 Conjecture 1.1 holds when $k>10^{13350}$. 
\end{thm}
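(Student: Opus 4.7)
The plan is to establish the matching lower bound $M_k \geq 2\cosh^{-1}(1+2k)$, since the reverse inequality is already contained in \eqref{eqn:growth_bas}, with equality attained by a corkscrew geodesic on a thrice-punctured sphere. By the realization result of \cite{B2803}, the infimum $M_k$ is attained by some $k$-geodesic $\gamma_0$ on some finite-type hyperbolic surface $X_0$, so it suffices to analyze this minimizer.

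The strategy has two stages: a \emph{topological reduction} forcing $X_0$ to be a thrice-punctured sphere, and a \emph{dynamical classification} identifying the minimizer there. For the reduction, I would argue: (a) if $X_0$ has a closed geodesic boundary, pinch it to a cusp, which weakly decreases $\ell(\gamma_0)$ by standard monotonicity of geodesic length under pinching and preserves the self-intersection number; (b) if $X_0$ has positive genus or more than three cusps, find a simple essential closed curve $\alpha$ either disjoint from $\gamma_0$ or crossing it minimally, and pinch it, producing a simpler surface $X_0'$ with $\ell_{X_0'}(\gamma_0) \le \ell_{X_0}(\gamma_0)$. Iterating, $X_0$ reduces to a thrice-punctured sphere. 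The existence of such essential curves is where one uses the hypothesis $|\gamma_0\cap\gamma_0|=k$ together with the linear-in-$k$ bound $I(k,X)\le Ck$ of \cite{EP2802}, plus the length bound of \cite{Y2800} constraining how many disjoint curves $\gamma_0$ can cross.

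For the dynamical classification on a thrice-punctured sphere $X=\bbh^2/\Gamma$, I would parameterize hyperbolic conjugacy classes by words in the two parabolic generators of $\Gamma$ and use $\ell(\gamma)=2\cosh^{-1}(|\mathrm{tr}(\gamma)|/2)$. The corkscrew word $w_k$ has $|\mathrm{tr}(w_k)|=4k+2$, giving length exactly $2\cosh^{-1}(2k+1)$. The remaining task is to show every word $w$ with self-intersection number $\geq k$ satisfies $|\mathrm{tr}(w)|\geq 4k+2$, a combinatorial statement provable by induction on word length using Fricke trace identities, together with a direct comparison against the Baribaud computations \cite{B2795} on pants and the Yamada sharp bound \cite{Y2799}.

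The main obstacle, and the apparent source of the enormous threshold $k>10^{13350}$, is step (b) of the topological reduction. The Shen-Wang inequality $k\leq 9L^2 e^{L/2}$ only yields $L\gtrsim 2\log k$, falling short of the sharp $L\geq 2\log(4k)$ needed to drive the reduction; bridging this gap requires delicate control of how $\gamma_0$ can be packed into a surface of given complexity without producing a disjoint essential curve. The delicate sub-cases are the once-punctured torus and the four-times-punctured sphere, whose $\mathrm{SL}(2,\bbz)$-type length spectra admit words with small trace relative to their self-intersection count; ruling these out requires a quantitative strengthening of the Shen-Wang estimate whose sub-optimal constants compound through the iteration to produce the astronomical threshold.
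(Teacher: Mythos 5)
The statement you are proving is not one the present paper establishes itself: it is quoted from \cite{Y2801}, and the paper's entire account of its proof is one sentence — the minimizer must lie on a cusped surface, and then the main theorem of \cite{Y2805} applies. Your proposal attempts to reconstruct such a proof, but it departs from that strategy and has a genuine gap at its hinge point.

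The gap is in step (b) of your topological reduction. You propose that when $X_0$ has excess complexity one can find a simple essential closed curve $\alpha$ ``either disjoint from $\gamma_0$ or crossing it minimally,'' pinch $\alpha$, and obtain $\ell_{X_0'}(\gamma_0)\le\ell_{X_0}(\gamma_0)$. If $\alpha$ is disjoint from $\gamma_0$ this is the standard Schwarz-lemma monotonicity and is fine; but if $\alpha$ crosses $\gamma_0$ at all, pinching $\alpha$ forces $\ell(\gamma_0)\to\infty$, so that branch of the argument is simply false. And the disjoint branch need not be available: $\gamma_0$ may fill the surface it lives on. A curve with $k$ self-intersections traces out a $4$-valent graph with $k$ vertices and $2k$ edges; Euler's formula then permits the filled subsurface to have $|\chi|$ as large as $k-1$. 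Thus a priori the minimizer can fill a surface of very large complexity, and there is no disjoint essential curve to pinch. This is precisely why the argument in \cite{Y2801} only reduces to \emph{cusped} surfaces (by passing to the completion of the filled subsurface with boundary geodesics — which \emph{are} disjoint from $\gamma_0$ — pinched to cusps) and does not reduce to the thrice-punctured sphere; that further reduction is equivalent to the heart of the conjecture, not a preliminary step.

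Your second stage, a Fricke-trace induction on words in the two parabolic generators of a thrice-punctured sphere, is also not how \cite{Y2801} or \cite{Y2805} proceed, and is not a routine induction: the self-intersection number of a conjugacy class is not a quantity that propagates cleanly through the Cayley–Hamilton/Fricke recursions, and relating it to trace is exactly the analytic content of the problem. The actual argument of \cite{Y2805} (and of the present paper's Theorem \ref{thm:main}) is geometric, via a thick-thin decomposition together with winding-number estimates of the kind proved in Lemma \ref{lemma_31} and Lemma \ref{lemma_32}, applied to the (possibly complicated) cusped surface on which $\gamma_0$ fills. Your concluding diagnosis — that the giant threshold comes from iterating the reduction through the once-punctured torus and four-punctured sphere cases — is therefore not the right explanation either; the threshold in \cite{Y2801} arises from the uniformity needed in Hanh Vo's thick-thin estimates, which is exactly what the present paper sharpens to get $k>1750$.
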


In \cite{Y2801}, the authors first noticed that when the length of a $k$-geodesic is smallest, it must lies on a cusped hyperbolic surface. And following the main result of \cite{Y2805} to finish the proof.

In the present paper we give a different proof and a better result: 
\begin{thm}\label{thm:main}
 Conjecture 1.1 holds when $k>1750$. 
\end{thm}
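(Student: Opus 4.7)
The plan is to give a non-asymptotic lower bound on $\ell(\gamma)$ in terms of $\abs{\gamma\cap\gamma}$ that matches the corkscrew construction for all $k>1750$. This replaces the two-step strategy of \cite{Y2801}—which first argues that the infimum is realized on a cusped surface and then appeals to \cite{Y2805}—by a single self-contained argument whose numerical constants can be tracked explicitly, avoiding the extremely loose reduction that is responsible for the threshold $10^{13350}$.

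First I would analyze how a candidate minimizer $\gamma$ interacts with the thin part of the underlying finite-type surface. Each deep excursion into a cusp neighborhood that winds $n$ times produces on the order of $\binom{n}{2}$ self-intersections localized to that excursion at a specific, computable length cost coming from the cusp geometry; collars around short simple geodesics contribute length without producing many self-intersections. Summed over all excursions and combined with Basmajian's stable neighborhood estimate from \cite{B2794}, these local inequalities yield a global bound
\[
\ell(\gamma)\ \ge\ 2\cosh^{-1}(1+2k)\qquad\text{for } k>1750,
\]
with equality forcing the surface to be a thrice-punctured sphere and $\gamma$ to be a corkscrew geodesic winding around a single cusp. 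The equality case is then verified by direct trace computation in $\Gamma(2)=\langle A,B\rangle\subset\mathrm{PSL}_2(\mathbb{Z})$ with $A=\bigl(\begin{smallmatrix}1&2\\0&1\end{smallmatrix}\bigr)$, $B=\bigl(\begin{smallmatrix}1&0\\2&1\end{smallmatrix}\bigr)$: the corkscrew $A^k B$ has trace $2+4k$, hence length $2\cosh^{-1}(1+2k)$, and a direct geometric picture on the horoball at the $A$-cusp confirms that its self-intersection number equals $k$.

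The main obstacle is the non-asymptotic step: the threshold $k>1750$ is dictated by when the local cusp-excursion inequality becomes strict enough to dominate boundary effects from the finite-depth portion of the cusp, where the excursion is not yet well modeled by pure winding on a horoball. Unlike the asymptotic estimates of \cite{Y2800,Y2801,Y2805}, every constant—the Margulis constant controlling the cusp neighborhood, the stable-neighborhood width, and Basmajian's contribution per self-intersection—must be tracked by explicit numerics rather than swallowed into $O(\cdot)$ notation. The hard part of the proof will therefore lie in the explicit case analysis for $k$ near $1750$, rather than in any new conceptual input for large $k$.
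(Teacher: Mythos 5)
Your proposal captures the broad strategy of the paper---a thick-thin decomposition with explicitly tracked numerical constants, reduction to the corkscrew case in the end---but it has a substantive mathematical error and omits the ideas that make the argument actually close.

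The error: you write that ``collars around short simple geodesics contribute length without producing many self-intersections.'' This is false, and the paper treats it as a central difficulty. A single excursion into the collar of a short simple geodesic $c_i$ can wind around $c_i$ on the order of $\ell(\gamma_p)/\ell(c_i)$ times (with $\ell(c_i)$ arbitrarily small), producing self-intersection number comparable to that winding number (Lemma~\ref{lemma_31}). The paper's thin part is $\caln_t=\bigcup_{c\in\calx\cup\caly}N_0(c)$, the union over both short geodesics $\calx$ and cusps $\caly$, and the winding-number bound $w(\gamma_p)\le 2\sinh(\ell(\gamma_p)/2)$ of Lemma~\ref{lemma_32} is where the dominant exponential term $e^{L/2}$ in $k$ comes from; by treating the thin part as essentially cusps-only, you lose the case that is actually extremal, since short-geodesic collars model the cusp case and degenerate to it.

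The gaps: you never state the local inequality that converts length into a self-intersection bound, and all the real work in the paper is precisely in making that quantitative. Concretely: (i) in the thick part, the bound $\abs{\Gamma_1\cap\Gamma_1}<\tfrac12\left(\tfrac{25}{12}L_1+m\right)^2$ comes from cutting $\Gamma_1$ into subarcs shorter than the injectivity-radius lower bound $\log\tfrac{1+\sqrt5}{2}$ and observing each pair meets at most once; (ii) in the thin part, the self- and cross-intersection numbers of the arcs $\gamma_p$ are controlled via their winding numbers, separately in the ``general'' and ``special'' cases, and then one needs a genuine optimization (the convexity arguments of Section~4) over the number of arcs $m$, over $m_0,m_1,m_2$, and over the split of length $L=L_1+L_2$ to verify $4k+2\le e^{L/2}$ (equivalently $L\ge 2\cosh^{-1}(1+2k)$) in every regime; (iii) in the borderline case $m=1$ with short thick part one still needs a geometric argument (the paper's Theorem~4.7, replacing $\Gamma$ by a shorter geodesic with more self-intersections to derive a contradiction unless $\Gamma$ is a corkscrew) --- this is not a consequence of any local inequality and is not supplied by Basmajian's stable neighborhood theorem. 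As it stands, your plan correctly names the target and the relevant inputs but does not produce the argument, and the one concrete claim it makes about the thin part is wrong in the direction that matters.
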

We begin by applying the thick-thin decomposition to the surface (Section~\ref{section2}). The uniform lower bound on the injectivity radius in the thick part enables precise control of self-intersection numbers, which we develop in Section~\ref{section3}. To analyze the thin parts, we adapt the methodology of \cite[Lemma 2.5]{Y2805}, yielding an exact count of self-intersections (Section~\ref{section4}). Combining these results, we conclude the proof in Section~\ref{section5}.

Our results represent a significant improvement, as the bound $1750$ is substantially smaller than the previous estimate of $10^{13350}$. Moreover, this work opens two new possibilities: first, computer-assisted verification of Conjecture 1.1 becomes feasible for all $k \leq 1750$; second, it suggests a potential pathway to prove that $I(k,X) = k$ holds for all $k \geq 1$ on the thrice-punctured sphere.

Theorem \ref{thm:main} can be generalized to general orientable finite-type hyperbolic surfaces, possibly with geodesic boundaries, since they can be doubled to get a surface as in Theorem \ref{thm:main}.

\subsubsection*{Acknowledgement}

The author would like to thank Professor Jiajun Wang for introducing me this question, Professor Shing-Tung Yau for some helpful discussions, and Yuhao Xue for his interest in the problem and pointing mistakes in the paper.

\begin{comment}

The proof of Theorem \ref{thm:main} benefits from the following upper bound of $M_k$ by Baribaud in \cite{B2795}% (also a corollary of \eqref{eqn:growth_bas})

\begin{thm}[Baribaud, \cite{B2795}]\label{thm:baribaud}
For any $\varepsilon>0$, $M_k<(2+\varepsilon)\log k$ for $k$ sufficiently large.
\end{thm}

We study the self-intersection number of a closed geodesic with given length.

\begin{thm}\label{thm:bound_intersection_by_length}
The self-intersection number of a closed geodesic on a hyperbolic surface with length $L$ is less than $9L^2e^{L/2}$.
\end{thm}

From the proof of Theorem \ref{thm:bound_intersection_by_length} and Lemma \ref{lemma:inj_radius_outside_nbhds} we actually have
$$\varlimsup_{L\to+\infty}\frac{\abs{\Gamma\cap\Gamma}}{L^2e^{L/2}}\leqslant8$$

Theorem \ref{thm:main} follows from Theorem \ref{thm:baribaud} and the following lower bound estimate of $M_k$.

\begin{cor}\label{thm:lower_bound}
For any $\varepsilon>0$, $M_k>(2-\varepsilon)\log{k}$ for $k$ sufficiently large.
\end{cor}

Theorem \ref{thm:bound_intersection_by_length} is obtained by estimating the self-intersection number in certain thin and thick parts of a hyperbolic surface for a closed geodesic with length sufficiently large.

\end{comment}

\section{Neighborhoods of sufficiently short geodesics and cusps}\label{section2}

In this section, we establish a thick-thin decomposition for hyperbolic surfaces that may include cusps, following an approach similar to \cite{Y2800}. Since the injectivity radius admits a universal lower bound on the thick part, we can effectively bound the self-intersection number in this region.

Let $L\geqslant4\log(1+\sqrt{2})>3.5$ be a constant. Let $\Sigma$ be an oriented, metrically complete hyperbolic surface of finite type without boundary. Topologically $\Sigma$ is an orientable surface of genus $g$ with $n$ punctures such that $2g+n\geqslant 3$. Denote the length of a curve $c$ on $\Sigma$ by $\ell(c)$. 

Let $\Gamma$ be a closed geodesic with length $L=\ell(\Gamma)\geqslant4\log(1+\sqrt{2})>3.5$ on $\Sigma$. Suppose $\Gamma$ is represented as a local isometry $f:S^1\to\Sigma$, where $S^1$ is a circle with length $L$. Let $\cald\subset\Sigma$ be the set of self-intersection points of $\Gamma$, that is,
$$\cald=\set{x\in\Sigma\,:\, \exists\,s,t\in S^1, f(s)=f(t)=x, s\neq t}$$
The \emph{self-intersection number} of $\Gamma$ is defined as
\begin{equation}\label{eqn:self_intersection_number_defn}
\abs{\Gamma\cap\Gamma}:=\sum_{x\in \cald}\binom{\# f^{-1}(x)}{2}
\end{equation}

\subsection{A thick-thin decomposition}

Similar as \cite{Y2800} we define the collection $\calx=\{c_1,...,c_d\}$ of simple closed geodesics of length less than 1 and we have 

\begin{lemma}
The geodesics in $\calx$ are pairwisely disjoint. 
\end{lemma}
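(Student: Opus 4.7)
My plan is to invoke the classical collar lemma of Buser-Keen. In its trigonometric form, this lemma says that if two simple closed geodesics $c_1, c_2$ on an oriented hyperbolic surface meet transversally at a point, then
$$\sinh(\ell(c_1)/2)\cdot\sinh(\ell(c_2)/2)\geqslant 1.$$
Equivalently, around each simple closed geodesic $c$ of length $\ell$ there is an embedded annular collar of half-width $w(\ell)=\sinh^{-1}(1/\sinh(\ell/2))$, and the collars around disjoint simple closed geodesics are themselves disjoint; two simple closed geodesics whose collars overlap must coincide.

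I would argue by contradiction. Suppose $c_i,c_j\in\calx$ meet, necessarily transversally since both are simple closed geodesics. By definition of $\calx$ both have length strictly less than $1$, so
$$\sinh(\ell(c_i)/2)\cdot\sinh(\ell(c_j)/2)<\sinh^2(1/2)<0.3<1,$$
which contradicts the inequality above. Hence no two members of $\calx$ can intersect, and they are pairwise disjoint.

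The trigonometric inequality itself is standard: lift the configuration to $\bbh^2$, pick a lift of the intersection point $p$, and consider the right-angled hexagon (or equivalently the Saccheri-style quadrilateral) formed by lifts of $c_1,c_2$ together with the translates produced by the deck transformations corresponding to each simple loop; simplicity is used to guarantee that the relevant lifts do not cross themselves and bound a genuine embedded piece. I would either include a two-line derivation along these lines or simply cite Buser's book, \emph{Geometry and Spectra of Compact Riemann Surfaces}, for the collar lemma.

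The argument has essentially no obstacle: the only hypothesis being used is simplicity of the curves, which is built into the definition of $\calx$, together with the uniform length bound $\ell<1$, which is comfortably below the threshold $2\sinh^{-1}(1)=2\log(1+\sqrt{2})$ at which the collar inequality can fail. So the main thing to be careful about is just to state the collar lemma in a form valid for all finite-type surfaces (including cusped ones), which is classical.
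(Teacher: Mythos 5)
Your argument is correct, and it relies on the same external ingredient as the paper, namely the collar lemma, but you apply it in a different form. You quote the trigonometric version (two transversally intersecting simple closed geodesics satisfy $\sinh(\ell_1/2)\sinh(\ell_2/2)\geqslant 1$) and get a one-line numerical contradiction from $\ell<1$. The paper instead uses the embedded-collar form: if $c_j$ meets $c_i$, the short length of $c_j$ forces $c_j\subseteq N(c_i)$, where $N(c_i)$ is the embedded annular collar of half-width $\sinh^{-1}(1/\sinh(\ell(c_i)/2))$, and an embedded annulus contains only one simple closed geodesic, so $c_j=c_i$. Your route is shorter and requires no geometric reasoning beyond the quoted inequality; the paper's route has the modest advantage of exercising the collar neighborhoods $N(c_i)$, $N_3(c_i)$, $N_0(c_i)$ that it will use repeatedly afterward. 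Both are complete proofs.
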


\begin{proof}
If $c_i,c_j\in\calx$ with $c_i\cap c_j\neq\emptyset$, the collar lemma \cite[Lemma 13.6]{FM2012} implies that the collar
$$N(c_i):=\set{x\in\Sigma:d(x,c_i)<\sinh^{-1}\bracket{\frac1{\sinh(\ell(c_i)/2)}}}$$
is an embedded annulus. Suppose $y\in c_i\cap c_j$, for all $y'\in c_j$ we have
$$d(y',c_i)\leqslant d(y',y)\leqslant \frac{\ell(c_j)}2<\sinh^{-1}(1)<\sinh^{-1}\left(\frac1{\sinh(\ell(c_i)/2)}\right)$$
hence $y'\in N(c_i)$, so $c_j\subseteq N(c_i)$, since $N(c_i)$ is an annulus, the only simple closed geodesic in $N(c_i)$ is $c_i$ itself, a contradiction.

\end{proof}

For each $1\leqslant{i}\leqslant{d}$ we define neighborhood
\begin{equation}\begin{aligned}
N_3(c_i)=&\set{x\in{\Sigma}:d(x,c_i)<{\log{\frac{4}{\ell(c_i)}}}}
\end{aligned}\end{equation}
Since for all $t>0$
$$\sinh^{-1}\bracket{\frac1{\sinh(t/2)}}\geqslant\log\frac4{t}$$
we have $N_3(c_i)\subseteq N(c_i)$(\cite[Lemma 2.1]{Y2800}). Additively we define
\begin{equation}\begin{aligned}
N_0(c_i)=&\set{x\in{\Sigma}:d(x,c_i)<{\log{\frac{4}{\ell(c_i)}}-\log2}}
\end{aligned}\end{equation}
$N_0(c_i),N_3(c_i)$ both an annulus, and $N_0(c_i)\subseteq N_3(c_i)$.

We also defined the set $\caly$ of punctures of $\Sigma$ in \cite{Y2800}.

In the upper half-plane model for $\bbh^2$, let $\Gamma_0$ be a cyclic group generated by a parabolic isometry of $\bbh^2$ fixing the point $\infty$, assume $\Gamma_0(-1,1)=(1,1)$ in $\bbh^2$. Let $H_c=\set{(x,y)\in\bbh^2\,\big|\,y\geqslant c}$ be a horoball. Each cusp can be modelled as $H_c/\Gamma_0$ for some $c$ up to isometry, and is diffeomorphic to $S^1\times[c,\infty)$ so that each circle $S^1\times\set{t}$ with $t\geqslant c$ is the image of a horocycle under $p$. Each circle is also called a \emph{horocycle} by abuse of notation. The circle $S^1\times\set{t}$ with $t\geqslant c$ is called an \emph{Euclidean circle}. A cusp is \emph{maximal} if it lifts to a union of horocycles with disjoint interiors such that there exists at least one point of tangency between different horocycles.

 Each puncture $c_i$ of $\Sigma$ has a maximal cusp whose boundary Euclidean circle $c$ has %length 
$\ell(c)\geqslant4$(Adams, \cite{A2817}), and the cusp of area $4$ that can be lifted to $\bbh^2$.
The projection $p$ maps the the triangle $\infty PQ$ to the cusp of area $4$ at $c_i$ and $p$ maps the interior of the triangle homeomorphically. Choose points $A_0, A_3$ on the ray from $P$ to $\infty$, and $B_0, B_3$ on the ray from $Q$ to $\infty$ so that
$$d(P,A_3)=d(Q,B_3)=\log2,\qquad d(A_3,A_0)=d(B_3,B_0)=\log2$$

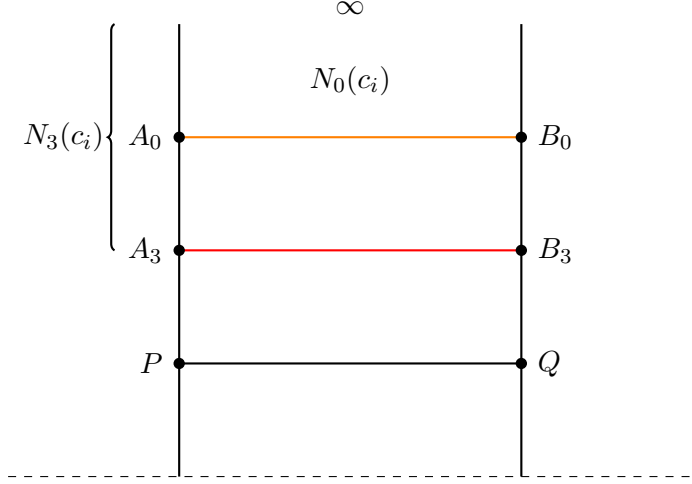
\begin{figure}[htbp]
\begin{center}
\tikzexternaldisable
\begin{tikzpicture}
\def\x{1.5}
\draw[thick] (-1.5*\x,0) to (-1.5*\x,4*\x);
\draw[thick] (1.5*\x,0) to (1.5*\x,4*\x);
\draw[thick] (-1.5*\x, \x) to (1.5*\x,\x);
\draw[thick, color=red] (-1.5*\x, 2*\x) to (1.5*\x,2*\x);
\draw[thick, color=orange] (-1.5*\x, 3*\x) to (1.5*\x,3*\x);
\draw[dashed] (-3*\x,0) to (3*\x,0);
\path (-1.5*\x,\x) node[circle, fill, inner sep=1.5pt, label=left:{$P$}]{};
\path (1.5*\x,\x)  node[circle, fill, inner sep=1.5pt, label=right:{$Q$}]{};

\path (-1.5*\x,2*\x) node[circle, fill, inner sep=1.5pt, label=left:{$A_3$}]{};
\path (1.5*\x,2*\x)  node[circle, fill, inner sep=1.5pt, label=right:{$B_3$}]{};

\path (-1.5*\x,3*\x) node[circle, fill, inner sep=1.5pt, label=left:{$A_0$}]{};
\path (1.5*\x,3*\x)  node[circle, fill, inner sep=1.5pt, label=right:{$B_0$}]{};
\draw [thick, decoration={
	brace, raise=0.1cm
	}, decorate
] (-2*\x, 2*\x) -- (-2*\x, 4*\x) 
	node [pos=0.5,anchor=east,xshift=-0.1cm] {$N_3(c_i)$}; 
\draw (0,3.5*\x) node {$N_0(c_i)$};
\draw (0,4*\x) node[above] {$\infty$};	
\end{tikzpicture}
\end{center}
\tikzexternalenable
\caption{\label{fig:nbhd_cusps}
Neighborhoods of cusps.}
\end{figure}

(Similar as \cite[Section 2.2]{Y2800})

For $j=0,3$, let $N_j(c_i)$ be the image of the triangle $\infty A_jB_j$ under $p$, $N(c_i)$ is the image of $\infty PQ$.

The following theorem is a generalization of the collar lemma, wherever the collar lemma in the compact case is proved in \cite[Lemma 13.6]{FM2012}:

\begin{lemma}
  For distinct $c_i,c_j\in\calx\cup\caly$, $N_3(c_i)\cap N_3(c_j)=\emptyset$.
\end{lemma}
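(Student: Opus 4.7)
The plan is to split into three cases depending on whether each of $c_i, c_j$ lies in $\calx$ or in $\caly$, and in each case to lift to the upper half-plane model and measure the hyperbolic distance between the two candidate neighborhoods. In all three cases I would argue by contradiction: a hypothetical common point of $N_3(c_i) \cap N_3(c_j)$ is lifted to $\bbh^2$ together with the relevant lifts of the two neighborhoods, and a direct distance computation forces a configuration inconsistent with the assumption that $c_i \neq c_j$.

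\textbf{Case 1: both in $\calx$.} The previous lemma has already shown that the curves in $\calx$ are pairwise disjoint. For pairwise disjoint simple closed geodesics the standard generalization of the collar lemma guarantees that the collars $N(c_i)$ of half-width $\sinh^{-1}(1/\sinh(\ell(c_i)/2))$ are themselves pairwise disjoint, not merely individually embedded; one proves this by taking closest lifts in $\bbh^2$ and analyzing the common perpendicular with the standard hyperbolic trigonometric identities. Since $N_3(c_i) \subseteq N(c_i)$ by the inequality recorded in the text, the claim follows immediately.

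\textbf{Cases 2 and 3: at least one in $\caly$.} For each $c \in \caly$, the region $N_3(c)$ lifts to a union of horoballs at parabolic fixed points of the deck group, bounded by horocycles whose projection to $\Sigma$ has length $4/2 = 2$, since moving up by hyperbolic distance $\log 2$ divides the quotient horocyclic length by $e^{\log 2} = 2$. In Case 2 one normalizes two parabolic fixed points to $0$ and $\infty$, uses Adams' theorem to note that the two maximal area-$4$ horoballs can at worst be mutually tangent, and then verifies that pushing both up by $\log 2$ separates them. Case 3 is analogous, with one horoball replaced by the $\log(4/\ell(c_i))$-neighborhood of a geodesic lift, and reduces to a distance estimate between a Euclidean half-plane and a Euclidean disk in the upper half-plane.

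The main obstacle is Case 2: unlike the collar case, the estimate is not purely local to a single cusp but must account for the interaction between different parabolic fixed points, and one has to verify that the uniform $\log 2$ shrinkage built into the definition of $N_3$ really is enough to separate a priori tangent configurations. Once this numerical check is in place, Case 3 follows by the same method with only minor changes in the underlying geometry (a geodesic in place of a second horoball), and together the three cases cover all distinct pairs $c_i, c_j \in \calx \cup \caly$.
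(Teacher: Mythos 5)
Your overall strategy --- working directly with lifts in $\bbh^2$ rather than reducing to pairs of pants --- is a genuinely different route from the paper's, which fixes a pants decomposition adapted to $\calx\cup\caly$ and then carries out explicit right-angled hexagon trigonometry inside a single pair of pants. But the linchpin of your Case 2 is wrong. Adams' theorem concerns a \emph{single} cusp: the maximal embedded cusp neighborhood around one puncture has boundary horocycle of length at least $4$. It says nothing about how two such neighborhoods around \emph{distinct} punctures sit relative to each other, and in fact the area-$4$ regions $N(c_i)$ and $N(c_j)$ for distinct $c_i,c_j\in\caly$ can overlap. Take the thrice-punctured sphere uniformized by $\Gamma(2)$, with parabolics $T(z)=z+2$ at $\infty$ and $S(z)=z/(2z+1)$ at $0$: the area-$4$ horoball at $\infty$ is $\set{y>1/2}$, the area-$4$ horoball at $0$ has Euclidean diameter $2$, and they overlap in the slab $1/2<y<2$. (A sanity check forces this: each of the three maximal cusps has area $4$, yet the whole surface has area $2\pi<12$.) So the starting configuration is not ``at worst mutually tangent,'' and the ``numerical check'' you defer to the end is not a finishing touch but the entire content of the case.

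What is actually true, and what your argument would need, is that the boundary-length-$2$ neighborhoods $N_3(c_i)$ (one step of $\log 2$ higher) are pairwise at worst tangent, and this comes from a Shimizu--J{\o}rgensen trace inequality rather than from Adams: normalizing $T(z)=z+a$ and writing the second parabolic as a matrix with lower-left entry $c$, discreteness and torsion-freeness force $\abs{\operatorname{tr}(TS^{-1})}=\abs{2-ac}\geqslant 2$, hence $ac\geqslant 4$, which translates exactly into the lift $\set{y>a/2}$ of $N_3(c_i)$ and the diameter-$2/c$ horoball at $0$ being disjoint; and one must then run this for every pair of $\Gamma$-translates, not just one pair of fixed points. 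Case 3 is also not ``analogous with minor changes'': the lift of $N_3(c_j)$ for a short geodesic $c_j\in\calx$ is a hypercyclic (banana) region bounded by equidistant curves, not a horoball, so the separation estimate has a different shape --- the paper performs it concretely inside a hexagon. The pants-decomposition reduction in the paper is precisely what localizes every interaction to finitely many explicit hexagon configurations and thereby avoids the global questions your direct-lift sketch leaves open.
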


\begin{proof}
 For all $c_i,c_j\in\calx\cup\caly$ $c_i$ and $c_j$ are not homotopy equivalent, so we choose a pants decomposition that for each $c_i\in\calx\cup\caly$, $c_i$ is a boundary(or infinity boundary) of a pair of pants. Only to prove that if $c_i$, $c_j$ are boundary components of same pair of pants $P$, then $N_3(c_i)\cap N_3(c_j)\neq\emptyset$ in $P$.

Suppose the boundary components of $P$ are $c_i,c_j,c_k$. When $c_i,c_j,c_k$ are all short geodesics, the proof is completed in \cite{FM2012}. So we only need to prove the case when one of $c_i,c_j,c_k$ is a cusp, without loss of generality assume $c_i$ is a cusp and we prove $N_3(c_i), N_3(c_j), N_3(c_k)$ pairwisely disjoint in $P$. $P$ can be constructed by gluing 2 hexagons along 3 nonadjacent boundary segments. In the upper half plane model of $\bbh^2$, let $P_1(-1,0)$, $P_2(1,0)$, $\ell_1$ and $\ell_2$ are the lines $x=-1$ and $x=1$. Assume $A_1(-1,a_j)$, $B_1(1,a_k)$, $\gamma_j$ and $\gamma_k$ are the half circle centered at $P_1, P_2$ and of radius $a_j,a_k$(in Euclidean coordinate). $A_2B_2$ is the geodesic orthogonal to half circle $\gamma_j$ and $\gamma_k$, $\widetilde{c_j}\subseteq\gamma_j$ and $\widetilde{c_k}\subseteq\gamma_k$ are the arcs $A_1A_2$ and $B_1B_2$. Note that if $c_j$ is a cusp, then $a_j=0$ and $A_1=A_2=P_1$, similarly when $c_k$ is a cusp. Then  the segment $\ell_1$ from $\infty$ to $A_1$, $\widetilde{c_j}$ ,geodesic arc $A_2B_2$, $\widetilde{c_k}$,and the segment $\ell_2$ from $B_1$ to $\infty$ are boundary components of an ideal hexagon $P'$, $P$ is constructed by gluing two copies of $P'$ along $\infty A_1$, $A_2B_2$, $B_1\infty$.

Assume $C_1(-1,2)$, $C_2(1,2)$, then $\partial N_3(c_i)$ is by gluing 2 copies of Euclidean segment $C_1C_2\subseteq P'$, and $N_3(c_i)$ is by gluing 2 copies of the region between $l_1,l_2$ and above $C_1C_2$. 
\begin{enumerate}
\item
If $c_j\in\calx$ is a short geodesic, let $Q_1(-b_j,0),Q_2(b_k,0)$ be the endpoints of the geodesic containing $A_2B_2$, we have $-1\leqslant b_j<b_k\leqslant1$. $\widetilde{c_j}$ perpendicular to $A_2B_2$ implies that
$$\bracket{1+\frac{b_k-b_j}2}^2=\bracket{\frac{b_k+b_j}2}^2+a_j^2$$ 
Assume $Q$ is the midpoint of segment $Q_1Q_2$ and $\theta:=\angle{P_1QA_2}$, we have
\begin{align*}
\frac{\ell(c_j)}2=d(A_1,A_2)=\log\tan\bracket{{\frac{\pi}4+\frac{\theta}2}}=\log\bracket{\frac{2a_j+\sqrt{4a_j^2+(b_k+b_j)^2}}{b_k+b_j}}
\end{align*}
Hence we have
\begin{align*}
\sinh{d(A_1,C_1)}\sinh{d(A_1,A_2)}&\geqslant \sinh\log\frac2{a_j}\sinh\log\bracket{\frac{2a_j+\sqrt{4a_j^2+(b_k+b_j)^2}}{b_k+b_j}}\\
&=\bracket{\frac1{a_j}-\frac{a_j}4}\cdot\frac{2a_j}{b_k+b_j}=1+\frac{(3-b_j)(1-b_k)}{2(b_k+b_j)}\geqslant1
\end{align*}
Hence $d(A_1,C_1)\geqslant w(\ell(c_j))$. Hence $N_3(c_i)\cap N_3(c_j)\cap P=\emptyset$.

\item
If $c_j$ is a cusp, then $Q_1(-1,0)$. Let $O'$ be the midpoint of $C_1P_1$ and $C_1'$ is the intersection of $A_2B_2$ and circle $C$ $(x+1)^2+(y-1)^2=1$. By gluing 2 copies of $P'$, 2 copies of the arc $C_1C_1'$ of $C$ is a horocycle of $c_j$. Since the vertical coordinate of $C_1'$ is no more than 1, let $C_1''(0,1)$ is on the arc $C_1C_1'$, hence we have
$$\ell(C_1C_1')\geqslant\ell(C_1C_1'')\geqslant\int^{\frac{\pi}2}_{0}\frac1{1+\cos x}dx=1$$

 hence the length of the horocycle is no less than 2, hence $N_3(c_i)\cap N_3(c_j)=\emptyset$ in $P$, similarly $N_3(c_i)\cap N_3(c_k)=\emptyset$ in $P$.

\item
Next we prove $N_3(c_j)\cap N_3(c_k)=\emptyset$. When one of $c_j$, $c_k$ is cusp we already proved above, we only need to consider the case $c_j,c_k$ are both short geodesics. Let $l$ be the line $x=\frac{b_k-b_j}2$, we only need to prove: the half $N_3(c_j)\cap P'$ of $N_3(c_j)\cap P$ lies in the left half $\{(x,y)\in\bbh^2:x<\frac{b_k-b_j}2,y>0\}$ of $\bbh^2$. Only to prove $d(A_1A_2,l)\geqslant w(\ell(c_j))$. In fact,
\begin{align*}
 d(A_1A_2,l)=d(A_2,l)&=-\log\tan\frac{\theta}2=\log\bracket{\frac{b_k+b_j+\sqrt{4a_j^2+(b_k+b_j)^2}}{2a_j}}
\end{align*}
Hence we have
\begin{align*}
 \sinh{d(A_1A_2,l)}\sinh{\frac{\ell(c_i)}2}&=\frac{b_k+b_j}{2a_j}\frac{2a_j}{b_k+b_j}=1
\end{align*}

Hence $N(c_j)\cap N(c_k)=\emptyset$, we proved the lemma.

\end{enumerate}

\end{proof}

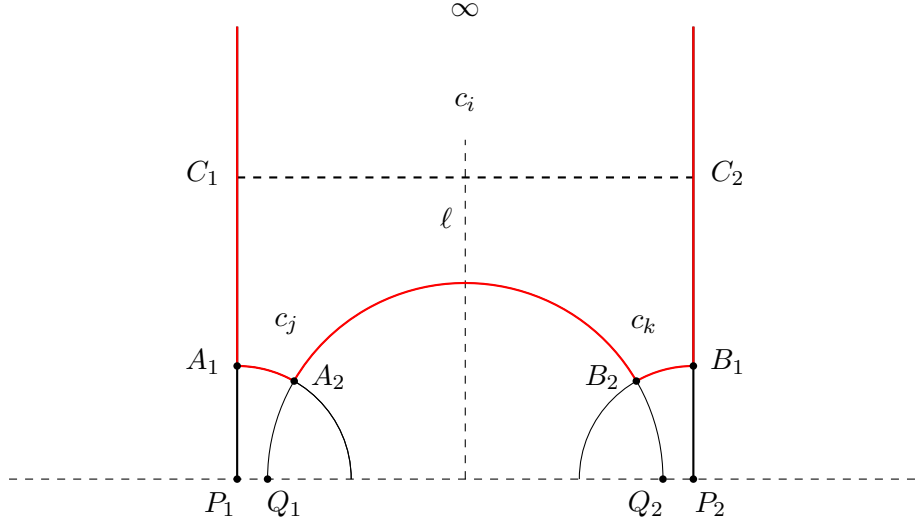
\begin{figure}[htbp]
\begin{center}
\tikzexternaldisable
\begin{tikzpicture}
\def\x{2}

\draw[thick] (-1.5*\x,0) to (-1.5*\x,3*\x);
\draw[thick] (1.5*\x,0) to (1.5*\x,3*\x);

\draw[thick, dashed] (-1.5*\x, 2*\x) to (1.5*\x,2*\x);
\draw[thin, dashed] (0,0) to (0,4.5);

\draw[dashed] (-3*\x,0) to (3*\x,0);

\draw[thin] (3,1.5) arc (90:180:1.5); 
\draw[thin] (-3,1.5) arc (90:0:1.5); 
\draw[thin] (-3,1.5) arc (90:0:1.5);  

\draw[thin] (2.6,0) arc (0:180:2.6); 

\draw[thick, color=red] (2.25,1.3) arc (30:150:2.6); 
\draw[thick, color=red] (3,1.5) arc (90:120:1.5); 
\draw[thick, color=red] (-3,1.5) arc (90:60:1.5); 
\draw[thick, color=red] (-1.5*\x,1.5) to (-1.5*\x,3*\x);
\draw[thick, color=red] (1.5*\x,1.5) to (1.5*\x,3*\x);

\draw (0,2.5*\x) node {$c_i$};

\draw (0,3*\x) node[above] {$\infty$};

\path (-1.5*\x,4) node[circle, inner sep=1pt, label={[shift={(180:0.45)}, anchor=center]{$C_1$}}]{};
\path (1.5*\x,4) node[circle, inner sep=1pt, label={[shift={(0:0.45)}, anchor=center]{$C_2$}}]{};
\path (-1.5*\x,0) node[circle, fill, inner sep=1pt, label={[shift={(240:0.45)}, anchor=center]{$P_1$}}]{};
\path (1.5*\x,0) node[circle, fill, inner sep=1pt, label={[shift={(300:0.45)}, anchor=center]{$P_2$}}]{};
\path (-2.6,0) node[circle, fill, inner sep=1pt, label={[shift={(300:0.45)}, anchor=center]{$Q_1$}}]{};
\path (2.6,0) node[circle, fill, inner sep=1pt, label={[shift={(240:0.45)}, anchor=center]{$Q_2$}}]{};
\path (-3,1.5) node[circle, fill, inner sep=1pt, label={[shift={(180:0.45)}, anchor=center]{$A_1$}}]{};
\path (1.5*\x,1.5) node[circle, fill, inner sep=1pt, label={[shift={(0:0.45)}, anchor=center]{$B_1$}}]{};
\path (2.25,1.3) node[circle, fill, inner sep=1pt, label={[shift={(180:0.45)}, anchor=center]{$B_2$}}]{};
\path (-2.25,1.3) node[circle, fill, inner sep=1pt, label={[shift={(0:0.45)}, anchor=center]{$A_2$}}]{};
\path (2.8,2) node[circle, inner sep=1pt, label={[shift={(180:0.45)}, anchor=center]{$c_k$}}]{};
\path (-2.8,2) node[circle, inner sep=1pt, label={[shift={(0:0.45)}, anchor=center]{$c_j$}}]{};
\path (0.2,3.4) node[circle, inner sep=1pt, label={[shift={(180:0.45)}, anchor=center]{$\ell$}}]{};

%\path (-1.5*\x,\x) node[circle, fill, inner sep=1pt, label={[shift={(180:0.45)}, anchor=center]{$P_1$}}]{};
%\path (1.5*\x,\x) node[circle, fill, inner sep=1pt, label={[shift={(0:0.45)}, anchor=center]{$Q_1$}}]{};
%\path (Q1) node[circle, fill, inner sep=1pt, label={[shift={(315:0.45)}, anchor=center]{$Q_1$}}]{};
	  			
\end{tikzpicture}
\end{center}
\tikzexternalenable
\caption{\label{fig:nbhd_cusps2}
A hexagon of $P$}
\end{figure}

Let $\caln_t:=\bigcup_{c\in\calx\cup\caly}N_0(c)$ be the thin part and $\caln_T:=\Sigma\setminus\caln_t$ be the thick part.

\subsection{Injective radius estimate}

The injective radius of each point in $\caln_T$ has a universal lower bound:

\begin{lemma}\label{lemma:inj_radius_outside_nbhds}
  For all $x\in\caln_T$, the injective radius of $x$ is no less than 
$$\log\bracket{\frac{1+\sqrt5}2}>0.48$$
\end{lemma}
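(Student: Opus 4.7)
The plan is to argue by contradiction. Suppose some $x \in \caln_T$ has injective radius less than $\log\phi$, where $\phi = \tfrac{1+\sqrt{5}}{2}$. Pick a lift $\tilde{x} \in \bbh^2$ and let $g \in \pi_1(\Sigma,x)\setminus\{e\}$ minimize $d(\tilde{x}, g\tilde{x})$; by hypothesis $d(\tilde{x}, g\tilde{x}) < 2\log\phi$. Since $d(\tilde{x}, h^k\tilde{x})$ is strictly increasing in $|k|$ for any fixed hyperbolic or parabolic $h$, the minimizer $g$ must be primitive. The proof hinges on the numerical identities $\cosh(2\log\phi) = \tfrac{3}{2}$ and $\sinh(\log\phi) = \tfrac{1}{2}$ (both from $\phi^2 = \phi + 1$), together with $2\log\phi < 1 < 4\log(1+\sqrt{2})$, so that Yamada's bound forces any closed geodesic of length less than $2\log\phi$ to be simple and hence to lie in $\calx$.

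If $g$ is hyperbolic, its axis projects to a simple closed geodesic $c_g \in \calx$. Setting $\rho = d(x, c_g)$ and using Fermi coordinates along the axis,
\[
d(\tilde{x}, g\tilde{x}) \;=\; \cosh^{-1}\!\bigl(1 + 2\sinh^2(\ell(c_g)/2)\cosh^2\rho\bigr),
\]
so the hypothesis gives $\sinh(\ell(c_g)/2)\cosh\rho < \tfrac{1}{2}$. Meanwhile, $x\notin N_0(c_g)$ gives $\rho \geq \log(2/\ell(c_g))$, whence $\cosh\rho \geq \tfrac{1}{\ell(c_g)} + \tfrac{\ell(c_g)}{4}$. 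The contradiction then reduces to the elementary one-variable inequality $\sinh(\ell/2)\bigl(\tfrac{1}{\ell}+\tfrac{\ell}{4}\bigr) > \tfrac{1}{2}$ for all $\ell > 0$.

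If $g$ is parabolic, I would normalize so that $g$ is $z\mapsto z+2$ fixing $\infty$, and $g$ corresponds to a puncture $c_i \in \caly$. For $\tilde{x} = (x_0, y_0)$ in the upper half plane, $d(\tilde{x}, g\tilde{x}) = \cosh^{-1}(1 + 2/y_0^2) < \cosh^{-1}(3/2)$ forces $y_0 > 2$. Since the paper's $N_0(c_i)$ is the region in the cusp at heights above the horocycle through $A_0 = (-1,2)$ and $B_0 = (1,2)$, this lift places $x$ in $N_0(c_i)$, contradicting $x\in\caln_T$.

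The only nonroutine step is the elementary inequality in the hyperbolic case, which is sharp in the limit $\ell \to 0^+$ (both sides tend to $\tfrac{1}{2}$); this sharpness is exactly what pins down the constant $\log\phi$ in the statement and matches the injective radius at the boundary horocycle of $N_0$ in the parabolic case. Strict inequality for $\ell > 0$ follows from $\sinh(\ell/2) > \ell/2$, yielding $\sinh(\ell/2)/\ell > \tfrac{1}{2}$ while the remaining term $\ell\sinh(\ell/2)/4$ is strictly positive, closing both cases.
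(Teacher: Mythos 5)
Your proof is correct, and while it has the same high-level structure as the paper's (contradiction; split into the hyperbolic case of a short simple geodesic in $\calx$ and the parabolic case of a cusp in $\caly$; use that $x\notin N_0(\cdot)$ forces a distance bound), it is a genuinely more self-contained route. The paper invokes two lemmas from the companion work \cite{Y2800} (Lemma 2.4 on annuli and Lemma 2.5 on cusps) and then splits the hyperbolic case further according to whether $c_i\cap\gamma=\emptyset$. You instead work directly from the displacement formulas in $\bbh^2$, namely $\cosh d(\tilde x,g\tilde x)=1+2\sinh^2(\ell/2)\cosh^2\rho$ for a hyperbolic $g$ with translation length $\ell$ and $\cosh d(\tilde x,g\tilde x)=1+2/y_0^2$ for a parabolic $g$ normalized as $z\mapsto z+2$, together with the identities $\cosh(2\log\phi)=\tfrac32$, $\sinh(\log\phi)=\tfrac12$. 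This avoids both the citation and the $c_i\cap\gamma$ case split (the displacement formula holds regardless of whether the geodesic loop through $x$ meets $c_g$), reduces the hyperbolic case to the elementary inequality $\sinh(\ell/2)(1/\ell+\ell/4)>\tfrac12$, and makes the origin of the golden-ratio constant transparent as the sharp limit $\ell\to 0^+$ matching the parabolic boundary $y_0=2$. Your explicit check that the minimizer $g$ is primitive is a small extra precaution not spelled out in the paper but harmless. Two minor notational caveats worth tightening if this were to be written out in full: the $\rho$ in the displacement formula should be $d_{\bbh^2}(\tilde x,\mathrm{axis}(g))$, which is $\geq d_\Sigma(x,c_g)$ (the inequality goes the right way, so the argument is fine); and the identification of the horoball $\{y>2\}$ with $N_0(c_i)$ relies on the paper's normalization via Adams's area-$\geq 4$ bound for maximal cusps, which you are implicitly inheriting.
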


\begin{proof}
  
We prove the lemma by contradiction. Let $x$ be a point in the thick part $\Sigma\setminus\caln_t$ and suppose that the injectivity radius $r_0$ at $x$ satisfies $r_0<\log\bracket{\frac{1+\sqrt5}2}$.

There exists a homotopically nontrivial simple closed curve $\gamma$ through $x$ with $\ell(\gamma)=2r_0<1$. $\gamma$ is freely homotopic to either a (unique) simple closed geodesic $\gamma^\prime$, or a puncture $c$ of $\Sigma$.

Suppose that $\gamma$ is freely homotopic to a puncture $c_i$ of $\Sigma$. Let $\widetilde{x}_1$ and $\widetilde{x}_2$ be lifts of $x$ as in Lemma 2.5. Then $d(\widetilde{x}_1,\widetilde{x}_2)=2r_0<0.97$ and hence $x\in N_3(c_i)$. Now $x\notin N_0(c_i)$ and $d(x,\partial N_3(c_i))\leqslant\log2$. By Lemma 2.5, we have $\sinh(r_0)\geqslant\frac12$, hence $r_0\geqslant\log\bracket{\frac{1+\sqrt5}2}$, contradiction.

Now suppose that $\gamma$ is freely homotopic to a simple closed geodesic $\gamma^\prime$. Then $\ell(\gamma^\prime)\leqslant 2r_0<0.97$ and hence $\gamma^\prime$ is a curve $c_i$ in $\calx$. There are two cases according to whether $c_i\cap \gamma=\emptyset$.

\begin{enumerate}
\item If $c_i\cap\gamma=\emptyset$, then $c_i$ and $\gamma$ co-bound an annulus since they are homotopic and disjoint in $\Sigma$. Since $x\notin N_0(c_i)$, we have $d(x,c_i)\geqslant\log\frac{2}{\ell(c_i)}$. By Lemma 2.4, we have
$$\sinh(r_0)>\frac14e^{d(x,c_i)}\ell(c_i)\geqslant\frac14\exp\bracket{\log\frac{2}{\ell(c_i)}}\ell(c_i)=\frac12$$
hence $r_0\geqslant\log\bracket{\frac{1+\sqrt5}2}$

\item 
If $c_i\cap\gamma\neq\emptyset$, let $x_0\in c_i\cap\gamma$. If $2r_0<0.97$, then $\ell(c_i)\leqslant\ell(\gamma)<0.97$, then $\log{\frac{2}{\ell(c_i)}}>\log2>0.69$. Hence 
$$d(c_i,x)\leqslant d(x_0,x)\leqslant\frac{\ell(\gamma)}2<0.485<0.69<\log{\frac{2}{\ell(c_i)}}$$
If follow that $x\in N_0(c_i)$ and we get a contradiction.
\end{enumerate}

\end{proof}

The proof of this lemma using the lemmas in \cite{Y2800}:

\begin{lemma}[\cite{Y2800}, Lemma 2.4]\label{lemma:injectivity_radius_annulus}

Let $A$ be an annulus in $\Sigma$ with boundary circles $\gamma_1$ and $\gamma_2$, where $\gamma_1$ is a geodesic and $\gamma_2$ is piecewise smooth. If there exists $x\in \gamma_2$ such that $d(\gamma_1, x)=d>0$, then
\begin{equation}
\sinh\bracket{\frac{\ell(\gamma_2)}2}>\frac14 e^d\ell(\gamma_1)
\end{equation}
If we further have $\ell(\gamma_1)<e^{-d}$, then
\begin{equation}
\ell(\gamma_2)>\frac{12}{25}e^{d}\ell(\gamma_1).
\end{equation}
\end{lemma}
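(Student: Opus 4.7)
The plan is to lift to the universal cover $\bbh^2$ and reduce everything to a single application of the Saccheri quadrilateral summit formula, followed by elementary inequalities for $\cosh$, $\sinh$, and $\sinh^{-1}$.

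First I would lift $\gamma_1$ to a complete geodesic $\widetilde{\gamma}_1 \subseteq \bbh^2$, and let $T$ denote the hyperbolic isometry generating the cyclic subgroup of $\pi_1(\Sigma)$ associated to $\gamma_1$, so that $T$ translates along $\widetilde{\gamma}_1$ with translation length $\ell(\gamma_1)$. Choose a lift $\widetilde{x}$ of $x$ with $d(\widetilde{x}, \widetilde{\gamma}_1) = d$, and lift $\gamma_2$ starting at $\widetilde{x}$. Since $\gamma_1$ and $\gamma_2$ are the two boundary components of the annulus $A$, they are freely homotopic in $A$, so this lift is a piecewise smooth arc of length $\ell(\gamma_2)$ ending at $T(\widetilde{x})$. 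In particular, comparing with the geodesic segment from $\widetilde{x}$ to $T(\widetilde{x})$ gives $d(\widetilde{x}, T(\widetilde{x})) \leqslant \ell(\gamma_2)$.

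Next, drop perpendiculars from $\widetilde{x}$ and $T(\widetilde{x})$ onto $\widetilde{\gamma}_1$, with feet $\widetilde{p}$ and $T(\widetilde{p})$. The quadrilateral $\widetilde{p}\, T(\widetilde{p})\, T(\widetilde{x})\, \widetilde{x}$ is a Saccheri quadrilateral with base of length $\ell(\gamma_1)$ and both legs of length $d$. The standard summit formula gives
$$\sinh\bracket{\frac{d(\widetilde{x}, T(\widetilde{x}))}{2}} = \cosh(d) \sinh\bracket{\frac{\ell(\gamma_1)}{2}},$$
which combined with the previous step yields the master inequality
$$\sinh\bracket{\frac{\ell(\gamma_2)}{2}} \geqslant \cosh(d) \sinh\bracket{\frac{\ell(\gamma_1)}{2}}.$$
For the first bound, use $\cosh(d) > e^d/2$ (strictly, since $e^{-d} > 0$) and $\sinh(t) > t$ for $t > 0$, giving $\sinh(\ell(\gamma_2)/2) > e^d \ell(\gamma_1)/4$.

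For the second bound, set $y := e^d \ell(\gamma_1)/4$; the hypothesis $\ell(\gamma_1) < e^{-d}$ places $y$ in $(0, 1/4)$. Since $\sinh^{-1}$ is increasing, $\ell(\gamma_2)/2 > \sinh^{-1}(y)$, and it remains to verify that $\sinh^{-1}(y)/y > 24/25$ on $(0, 1/4]$. This follows because $\sinh^{-1}(y)/y$ is a decreasing function of $y > 0$, while at $y = 1/4$ one has $\sinh^{-1}(1/4) = \log\bracket{(1+\sqrt{17})/4} > 6/25$. Multiplying by $2$ delivers $\ell(\gamma_2) > (12/25) e^d \ell(\gamma_1)$.

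The essential geometric step is the Saccheri calculation; the only subtlety is confirming that lifting $\gamma_2$ from $\widetilde{x}$ lands at $T(\widetilde{x})$ for the same $T$ associated to $\gamma_1$, which follows from the homotopy equivalence of $\gamma_1$ and $\gamma_2$ in $A$. Everything else is routine hyperbolic trigonometry and the one-variable inequality comparing $\sinh^{-1}$ to its linearization on a short interval, so I do not anticipate a serious obstacle.
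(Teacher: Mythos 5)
Your argument is correct, and it is worth noting that the present paper does not actually prove this lemma; it simply imports it verbatim from \cite{Y2800} (Lemma~2.4 there), so there is no internal proof here to compare against. Your route -- lift $\gamma_1$ to an axis $\widetilde\gamma_1$ with deck transformation $T$ of translation length $\ell(\gamma_1)$, lift $\gamma_2$ from $\widetilde{x}$ to $T(\widetilde{x})$, and apply the Saccheri summit relation $\sinh(s/2)=\cosh(h)\sinh(b/2)$ to the quadrilateral $\widetilde p\,T(\widetilde p)\,T(\widetilde x)\,\widetilde x$ -- is the standard one for this type of estimate and gives the master inequality $\sinh(\ell(\gamma_2)/2)\geqslant\cosh(d)\sinh(\ell(\gamma_1)/2)$; the two displayed bounds then follow exactly as you say, since $\cosh d>e^d/2$, $\sinh t>t$, $y\mapsto\sinh^{-1}(y)/y$ is decreasing, and $\sinh^{-1}(1/4)=\log\bigl((1+\sqrt{17})/4\bigr)>6/25$. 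One harmless imprecision: you cannot in general \emph{choose} the lift so that $d(\widetilde x,\widetilde\gamma_1)=d$, since the shortest path in $\Sigma$ from $x$ to $\gamma_1$ need not lift to end on the chosen component $\widetilde\gamma_1$; but you always have $d(\widetilde x,\widetilde\gamma_1)\geqslant d(x,\gamma_1)=d$, and since $\cosh$ is increasing this only strengthens the inequality, so the argument goes through unchanged. Your observation that the lift of $\gamma_2$ terminates at $T(\widetilde x)$ because $\gamma_1,\gamma_2$ cobound the annulus $A$ is exactly the point that needs saying, and you said it.
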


\begin{lemma}[\cite{Y2800}, Lemma 2.5]\label{lemma:injectivity_radius_cusp}

Let $x\in{N_3(c_i)}$ for a cusp $c_i$ in $\mathcal{Y}$. If $d(x,\partial{N_3(c_i)})=d$, then the injective radius at $x$ is $\sinh^{-1}(e^{-d})$.
\end{lemma}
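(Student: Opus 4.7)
The plan is to lift to the upper half-plane model $\bbh^2$, realize $N_3(c_i)$ explicitly as the quotient of a horoball, and reduce the statement to an elementary hyperbolic distance computation.

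First, I would normalize so that $c_i$ lifts to the cusp at $\infty$ with parabolic stabilizer $\Gamma_0=\langle z\mapsto z+2\rangle$, matching the normalization $\Gamma_0(-1,1)=(1,1)$ used earlier in the section. Unwinding the explicit description of $N_3(c_i)$ as the image of the triangle $\infty A_3 B_3$ with $A_3=(-1,1)$ and $B_3=(1,1)$, one sees that $\partial N_3(c_i)$ lifts to the horocycle $y=1$ and that $N_3(c_i)$ itself is the image of the horoball $H:=\{(x,y)\in\bbh^2:y\geqslant 1\}$ under the covering projection $p:\bbh^2\to\Sigma$.

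Second, I would convert $d(x,\partial N_3(c_i))=d$ into a statement about height. Pick any lift $\widetilde{x}=(x_0,y_0)$ of $x$ with $y_0\geqslant 1$. The geodesic perpendicular to the horocycle $y=1$ through $\widetilde{x}$ is the vertical ray $x=x_0$, and along it the hyperbolic distance from height $y_0$ down to height $1$ is $\log y_0$. Since vertical geodesics realize the distance from a point to a horizontal horocycle, $d=\log y_0$, so $y_0=e^{d}$.

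Third, I would compute the contribution of the cusp stabilizer to the injective radius. The shortest loop through $x$ freely homotopic to $c_i$ lifts to a geodesic segment from $\widetilde{x}$ to $\widetilde{x}\pm(2,0)$, whose hyperbolic length $D$ is given by the standard formula
\[\cosh D=1+\frac{|2|^2}{2\,y_0^2}=1+\frac{2}{y_0^2}.\]
The half-angle identity $\sinh(D/2)=\sqrt{(\cosh D-1)/2}$ then yields $\sinh(D/2)=1/y_0=e^{-d}$, so this loop has half-length $\sinh^{-1}(e^{-d})$.

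The only subtle step, and the one I expect to demand the most care, is ruling out that some deck transformation $\gamma\notin\Gamma_0$ yields a shorter loop. Every such $\gamma$ sends $H$ to a horoball $\gamma H$ based at a different boundary point of $\bbh^2$, and by the disjointness of $N_3$-preimages (the previous lemma, together with its version for $\Gamma_0$-translates at other parabolic fixed points) we have $\gamma H\cap H=\emptyset$. A short Euclidean calculation, using that $\gamma H$ is a Euclidean disk tangent to $\mathbb{R}$ contained in $\{y\leqslant 1\}$, then shows $d(\widetilde{x},\gamma\widetilde{x})\geqslant D$, with equality exactly in tangency configurations such as those appearing in the thrice-punctured sphere. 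Combining the two cases, the injective radius at $x$ equals $D/2=\sinh^{-1}(e^{-d})$, as claimed.
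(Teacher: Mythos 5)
The paper itself does not prove this lemma; it is quoted from \cite{Y2800}, so there is no in-paper proof to compare against. Judging your argument on its own merits: the first three paragraphs are correct and give the standard computation. Normalizing so that $N_3(c_i)$ lifts to the horoball $H=\{y\geqslant 1\}$, a lift $\widetilde x=(x_0,y_0)$ with $y_0=e^d$, and the displacement of the parabolic generator satisfying $\cosh D=1+2/y_0^2$, i.e.\ $\sinh(D/2)=e^{-d}$ --- this correctly identifies the cusp contribution to the injectivity radius.

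The gap is in the fourth paragraph, and it is not merely a matter of care: the claim that ``$\gamma H\cap H=\emptyset$ plus a short Euclidean calculation gives $d(\widetilde x,\gamma\widetilde x)\geqslant D$'' is false as stated. Writing $\gamma=\bigl(\begin{smallmatrix}a&b\\ c&d'\end{smallmatrix}\bigr)$, the horoball $\gamma H$ has Euclidean diameter $1/c^2$, so disjointness from $H=\{y\geqslant 1\}$ only gives $|c|\geqslant 1$. That is not strong enough. For instance take $\widetilde x=i$ (so $d=0$ and $D=\cosh^{-1}3$) and the parabolic $\gamma(z)=z/(1-z)$, for which $\gamma H$ is the diameter-$1$ horoball tangent to $H$ at $(-1,1)$; then $\gamma(i)=(-1+i)/2$ and $\cosh d(i,\gamma i)=3/2<3$. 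Such a $\gamma$ cannot occur in a torsion-free Fuchsian group with a cusp at $\infty$, but the information you invoke (disjointness of the $N_3$-preimages alone) does not rule it out, so the Euclidean estimate you gesture at does not exist.

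What saves the lemma, and what the paper sets up for precisely this purpose, is Adams's theorem that the maximal cusp has area at least $4$: the area-$4$ cusp $\{y>1/2\}/\Gamma_0$ is embedded, so $\gamma H_{1/2}\cap H_{1/2}=\emptyset$ for all $\gamma\notin\Gamma_0$, which forces $2/c^2\leqslant 1/2$, i.e.\ $|c|\geqslant 2$. With that in hand, since $\gamma$ is hyperbolic or parabolic with fixed points $z_1,z_2\in\mathbb R$ (real and finite because $\Gamma$ is torsion-free and $\gamma$ does not fix $\infty$), the identity $cz^2+(d'-a)z-b=c(z-z_1)(z-z_2)$ together with $|z-z_j|\geqslant y_0$ gives
\begin{equation*}
\cosh d(\widetilde x,\gamma\widetilde x)
=1+\frac{|c(z-z_1)(z-z_2)|^2}{2y_0^2}
\geqslant 1+\frac{c^2 y_0^2}{2}
\geqslant 1+2y_0^2
\geqslant 1+\frac{2}{y_0^2}
=\cosh D,
\end{equation*}
using $y_0\geqslant 1$. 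So the architecture of your proof is right, but the hypothesis you cite is strictly weaker than what the inequality requires; you have to go one horoball deeper, from the $N_3$-preimage to the embedded area-$4$ cusp, before the estimate closes.
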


\section{Self-intersection estimate in thick part}\label{section3}

Let $\Gamma_2:=\Gamma\cap\caln_t$ be the part of $\Gamma$ in the thin part and $\Gamma_1:=\Gamma\setminus\Gamma_2$ in the thick part. Note that $\Gamma_2$ could be empty. Let $L_1:=\ell(\Gamma_1)$ and $L_2:=\ell(\Gamma_2)$, then $L=L_1+L_2$. The set $\cald$ of self-intersection points of $\Gamma$ consists of the self-intersection points of $\Gamma_1$ and $\Gamma_2$ since $\Gamma_1\cap\Gamma_2=\emptyset$. Let $\cald_1$ and $\cald_2$ be the sets of self-intersection points of $\Gamma_1$ and $\Gamma_2$ respectively. Then $\cald=\cald_1\cup\cald_2$ and
\begin{equation}\label{eqn:split_intersection_number}
\abs{\Gamma\cap\Gamma}=\abs{\Gamma_1\cap\Gamma_1}+\abs{\Gamma_2\cap\Gamma_2}
\end{equation}

In the case $\gamma_2\neq\emptyset$, since $\Gamma$ is closed, suppose $\Gamma_1$ is a collection of arcs $\delta_1,...,\delta_m$, $\Gamma_2$ is a collection of arcs $\gamma_1,...,\gamma_m$ where $m$ is the number of arcs. Since $d(x,\partial N_3(c_i))$ increases first and then decreases for each arc $\gamma'_0$ in $\Gamma\cap N_3(c_i)$, then $\gamma_0'\cap N_0(c_i)$ is either empty or an embedded arc, hence $\Gamma\cap\bracket{\bigcup_{c_i\in\calx\cup\caly}N_3(c_i)}$ is a collection of pairwisely disjoint arcs $\gamma_1',...,\gamma_n'$ where $m\leqslant n$. And for $1\leqslant k\leqslant m$, $\gamma_k\subseteq\gamma_k'$, for $m+1\leqslant k\leqslant n$, $\gamma_k'\subseteq\Gamma_1$. $\Gamma\setminus\bigcup_{j=1}^{m}\gamma_j'$ consists of $m$ arcs $\delta_1',...,\delta_m'$, for $1\leqslant j\leqslant m$, $\delta_j'\subseteq\delta_j$. Then if $\Gamma_2\neq\emptyset$, $\Gamma_1$ is also a colletion of $m$ arcs. We also have
$$L_1=\sum_{k=1}^{m}\ell(\delta_k)\qquad L_2=\sum_{k=1}^{m}\ell(\gamma_k)$$

We have two possiblities of each arc $\gamma_k$ of $\Gamma_2$:

\begin{enumerate}
 
\item \emph{General case}: $\Gamma$ intersects $N_0(c_i)$ for some $c_i\in\calx\cup\caly$ and $\Gamma$ intersects only one boundary of $\partial{N_0(c_i)}$. Assume In this case, as $x$ moves along $\gamma_k$, $d(x,\partial N_0(c_i))$ increases first and then decreases on arc $\gamma_k$, and $\gamma_k\cap c_i=\emptyset$.

\begin{figure}[htbp]
\centering
\tikzexternaldisable	
\begin{tikzpicture}[declare function={
	R = 42;
	f(\x) = R+.5-sqrt(R*R-\x*\x);
	ratio = 3;
}]
\def\x{2.5}
\draw [thick] (-4*\x,{f(4*\x)}) arc ({270-asin(4*\x/R)}:270:{R} and {R});
\draw [thick] (-4*\x,{-f(4*\x)}) arc ({90+asin(4*\x/R)}:90:{R} and {R});

\draw [thick, color=red] (-4*\x,0) ellipse ({f(4*\x)/ratio} and {f(4*\x)});

\foreach \a/\b in {-3.2*\x/red, 0/blue} {
	\draw [color=\b, thick] (\a,{-f(\a)}) arc (-90:90:{f(\a)/ratio} and {f(\a)});
	\draw [dashed, color=\b, thick] (\a,{f(\a)}) arc (90:270:{f(\a)/ratio} and {f(\a)});
}
%\draw [color=black, thick] (4*\x,{-f(4*\x)}) arc (-90:90:{f(4*\x)/ratio} and {f(4*\x)});
%\draw [color=black, thick, dashed] (4*\x,{f(4*\x)}) arc (90:270:{f(4*\x)/ratio} and {f(4*\x)});

\draw [thick, decoration={
	brace, mirror, raise=0.2cm
	}, decorate
] (-4*\x, {-f(4*\x)}) -- (0,{-f(3*\x)}) 
	node [pos=0.5,anchor=north,yshift=-0.2cm] {$N_3(c_i)$}; 

\draw [thick, decoration={
	brace, raise=0.1cm
	}, decorate
] (-3.2*\x,{f(3.2*\x)}) -- (0,{f(2*\x)}) 
	node [pos=0.5,anchor=south,yshift=0.1cm] {$N_0(c_i)$}; 

\draw[smooth, color=blue, thick] (-5*\x,0.4*\x)
  to [start angle=-20, next angle=0] ($(-4*\x,0)+(160:{f(4*\x)/ratio} and {f(4*\x)})$)
  to [next angle=40] ($(-4*\x,0)+(40:{f(4*\x)/ratio} and {f(4*\x)})$) coordinate(x);
\draw[smooth, color=blue, thick, dashed] (x) 
	to [start angle=40, next angle=-5] (-9,{f(9)}) coordinate (x);
\draw[smooth, color=blue, thick] (x) 
	to [start angle=-5, next angle=280] (-8.3,0) 
	to [next angle=5] (-7.8,{-f(7.8)}) coordinate (x);
\draw[smooth, color=blue, thick, dashed] (x) 
	to [start angle=5, next angle=80] (-7.35,0) 
	to [next angle=-5] (-6.7,{f(6.7)}) coordinate (x);
\draw[smooth, color=blue, thick] (x) 
	to [start angle=-5, next angle=280] (-6.1,0) 
	to [next angle=5] (-5.7,{-f(5.7)}) coordinate (x);
\draw[smooth, color=blue, thick, dashed] (x) 
	to [start angle=5, next angle=80] (-5.25,0) 
	to [next angle=-5] (-4.8,{f(4.8)}) coordinate (x);
\draw[smooth, color=blue, thick] (x) 
	to [start angle=-5, next angle=280] (-4.3,0) 
	to [next angle=5] (-4,{-f(4)}) coordinate (x);
\draw[smooth, color=blue, thick, dashed] (x) 
	to [start angle=5, next angle=80] (-3.65,0) 
	to [next angle=-5] (-3.3,{f(3.3)}) coordinate (x);
\draw[smooth, color=blue, thick] (x) 
	to [start angle=-5, next angle=270] (-2.8,0) 
	to [next angle=185] (-3.3,{-f(3.3)}) coordinate (x);
\draw[smooth, color=blue, thick, dashed] (x) 
	to [start angle=185, next angle=100] (-3.65,0) 
	to [next angle=175] (-4,{f(4)}) coordinate (x);
\draw[smooth, color=blue, thick] (x) 
	to [start angle=175, next angle=260] (-4.3,0) 
	to [next angle=185] (-4.8,{-f(4.8)}) coordinate (x);
\draw[smooth, color=blue, thick, dashed] (x) 
	to [start angle=185, next angle=100] (-5.25,0) 
	to [next angle=175] (-5.7,{f(5.7)}) coordinate (x);
\draw[smooth, color=blue, thick] (x) 
	to [start angle=175, next angle=260] (-6.1,0) 
	to [next angle=185] (-6.7,{-f(6.7)}) coordinate (x);
\draw[smooth, color=blue, thick, dashed] (x) 
	to [start angle=185, next angle=100] (-7.35,0) 
	to [next angle=175] (-7.8,{f(7.8)}) coordinate (x);
\draw[smooth, color=blue, thick] (x) 
	to [start angle=175, next angle=260] (-8.3,0) 
	to [next angle=185] (-9,{-f(9)}) coordinate (x);
\draw[smooth, color=blue, thick, dashed] (x) 
	to [start angle=185, next angle=140] ($(-4*\x,0)+(-40:{f(4*\x)/ratio} and {f(4*\x)})$) coordinate(x);
\draw[smooth, color=blue, thick] (x) 
	to [start angle=140, next angle=180] ($(-4*\x,0)+(200:{f(4*\x)/ratio} and {f(4*\x)})$)
	to [next angle=200] (-5*\x,-0.4*\x);

%\draw [thick, decoration={
%	brace, raise=0.1cm
%	}, decorate
%] (-4*\x,{f(4*\x)}) -- (0,{f(4*\x)}) 
%	node [pos=0.5,anchor=south,yshift=0.1cm] {$N(c_i)$}; 
  			
\path ({f(0)/ratio},0) node[circle, inner sep=1pt, label={[label distance=.4em, anchor=center]0:$c_i$}]{};
  			
\end{tikzpicture}
\caption{\label{fig:general_case}
The general case}
\end{figure}
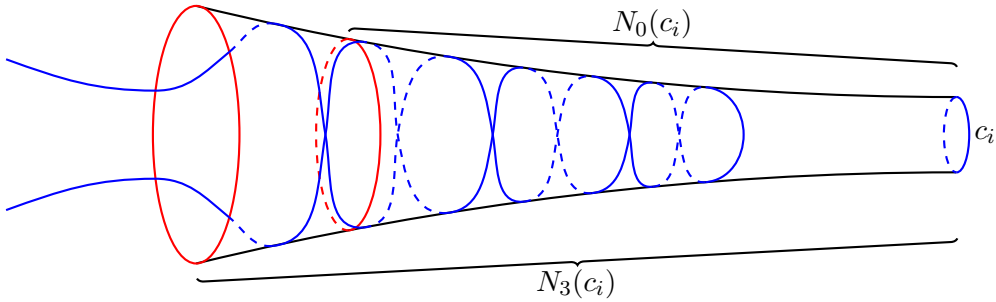

\item \emph{Special case}: $\Gamma$ intersects some $N_0(c_i)$ and $\Gamma$ intersects only both boundaries of $\partial{N_0(c_i)}$. In this case $\gamma_k$ is an embedded arc in $\Sigma$. Note that the special case does not happen for cusps.

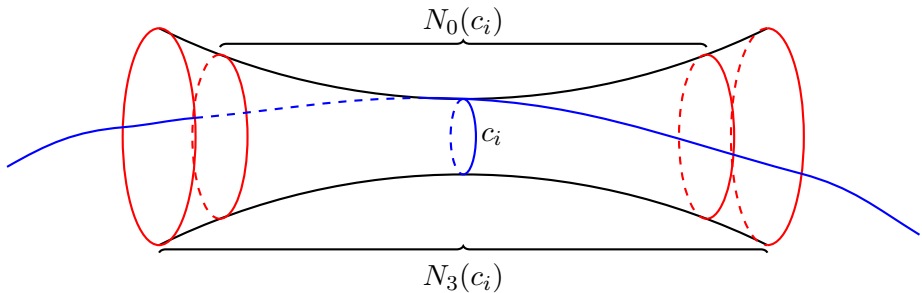
\begin{figure}[htbp]
\centering
\tikzexternaldisable
\begin{tikzpicture}[declare function={
	R = 9;
	f(\x) = R+.5-sqrt(R*R-\x*\x);
	ratio = 3;
}]
\def\x{1}
\draw [thick] (-4,{f(4)}) arc ({270-asin(4/R)}:{270+asin(4/R)}:{R} and {R});
\draw [thick] (-4,{-f(4)}) arc ({90+asin(4/R)}:{90-asin(4/R)}:{R} and {R});

\draw [thick, color=red] (-4,0) ellipse ({f(4)/ratio} and {f(4)});

\foreach \a/\b in {-3.2/red, 0/blue, 3.2/red, 4/red} {
	\draw [color=\b, thick] (\a,{-f(\a)}) arc (-90:90:{f(\a)/ratio} and {f(\a)});
	\draw [dashed, color=\b, thick] (\a,{f(\a)}) arc (90:270:{f(\a)/ratio} and {f(\a)});
}

%\draw [color=black, thick] (4,{-f(4)}) arc (-90:90:{f(4)/ratio} and {f(4)});
%\draw [color=black, thick, dashed] (4,{f(4)}) arc (90:270:{f(4)/ratio} and {f(4)});

\draw[smooth, color=blue, thick] (-6,-0.4)
  to [start angle=30, next angle=5] ($(-4*\x,0)+(175:{f(4*\x)/ratio} and {f(4*\x)})$)
	to [next angle=5] ($(-4*\x,0)+(10:{f(4*\x)/ratio} and {f(4*\x)})$) coordinate(x);
  %to [next angle=40] ($(-4*\x,0)+(40:{f(4*\x)/ratio} and {f(4*\x)})$) coordinate(x);
\draw[smooth, color=blue, thick, dashed] (x) 
	to [start angle=5, next angle=0] (-0.5,{f(-0.5)}) coordinate (x);
\draw[smooth, color=blue, thick] (x)
  to [start angle=0, next angle=-15] ($(4*\x,0)+(-20:{f(4*\x)/ratio} and {f(4*\x)})$)
  to [next angle=-32] (6, -1.3);
%  to [next angle=10] ($(-4*\x,0)+(178:{f(4*\x)/ratio} and {f(4*\x)})$)
%	to [next angle=5] ($(-4*\x,0)+(10:{f(4*\x)/ratio} and {f(4*\x)})$) coordinate(x);

\draw [thick, decoration={
	brace, raise=0.1cm
	}, decorate
] (-3.2,{f(3.2)}) -- (3.2,{f(3.2)}) 
	node [pos=0.5,anchor=south,yshift=0.1cm] {$N_0(c_i)$}; 

\draw [thick, decoration={
	brace, raise=-0.1cm
	}, decorate
] (-4,-{f(4)}) -- (4,-{f(4)}) 
	node [pos=0.5,anchor=north,yshift=-0.1cm] {$N_3(c_i)$};

\path ({f(0)/ratio},0) node[circle, inner sep=1pt, label={[label distance=.4em, anchor=center]0:$c_i$}]{};
  			
\end{tikzpicture}

\caption{\label{fig:special_case}
The special case}
\end{figure}

\end{enumerate}

\begin{prop}
 $$m\leqslant\frac{L}{2\log2}$$
\end{prop}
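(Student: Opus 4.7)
The plan is to show that every arc $\delta_j$ of $\Gamma_1$ has length at least $2\log 2$, from which
$$
L \;\geqslant\; L_1 \;=\; \sum_{j=1}^m \ell(\delta_j) \;\geqslant\; 2m\log 2
$$
and hence $m \leqslant L/(2\log 2)$. When $\Gamma_2=\emptyset$ we set $m=0$ and the bound is trivial, so we may assume $\Gamma_2\neq\emptyset$, in which case the arcs $\delta_1,\ldots,\delta_m$ are well defined as in the excerpt.

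The key structural ingredients are already in place. Directly from the definitions, the set $N_3(c)\setminus N_0(c)$ is a collar of width exactly $\log 2$, so every path from $\partial N_0(c)$ to $\partial N_3(c)$ has length at least $\log 2$ (for $c\in\calx$ this is the difference of the two collar radii, and for $c\in\caly$ it is $d(A_3,A_0)=d(B_3,B_0)=\log 2$ from the horocyclic picture). The preceding lemma gives $N_3(c)\cap N_3(c')=\emptyset$ for distinct $c,c'\in\calx\cup\caly$. Since $\delta_j$ is a maximal sub-arc of $\Gamma$ contained in $\caln_T=\Sigma\setminus\bigcup N_0(c)$, each of its endpoints lies on some $\partial N_0(c)$.

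Let $p,q$ denote the endpoints of $\delta_j$, with $p\in\partial N_0(c_p)$ and $q\in\partial N_0(c_q)$. If $c_p\neq c_q$, then disjointness of $N_3(c_p)$ and $N_3(c_q)$ forces $\delta_j$ to exit $N_3(c_p)$ before entering $N_3(c_q)$; the sub-arc from $p$ to the first point of $\partial N_3(c_p)$ and the sub-arc from the last point of $\partial N_3(c_q)$ to $q$ are disjoint and each has length $\geqslant\log 2$, giving $\ell(\delta_j)\geqslant 2\log 2$. If instead $c_p=c_q=:c$, I claim $\delta_j$ cannot lie entirely inside $N_3(c)$. Otherwise, since $\Gamma\cap N_3(c)$ decomposes into the components $\gamma'_k$, the connected arc $\delta_j$ would be contained in a single $\gamma'_k$. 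By the monotonicity of $d(\cdot,\partial N_3(c))$ along $\gamma'_k$ noted in the excerpt, $\gamma'_k\cap N_0(c)$ is empty or a single sub-arc, so $\gamma'_k\setminus N_0(c)$ has at most two connected components, each with at most one endpoint on $\partial N_0(c)$. Thus any connected subset of $\gamma'_k\setminus N_0(c)$ has at most one endpoint on $\partial N_0(c)$, contradicting $p,q\in\partial N_0(c)$. Hence $\delta_j$ exits $N_3(c)$, and the same two-collar-crossing argument yields disjoint sub-arcs near $p$ and $q$ of length $\geqslant\log 2$ each, so again $\ell(\delta_j)\geqslant 2\log 2$.

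The main obstacle is the degenerate case $c_p=c_q$: one must rule out $\delta_j$ being trapped inside $N_3(c)\setminus N_0(c)$, which is where the monotonicity of arcs of $\Gamma\cap N_3(c)$ together with the topology of $N_3(c)\setminus N_0(c)$ (two collar annuli when $c\in\calx$, a single cusped annulus when $c\in\caly$) enter decisively. Once $\ell(\delta_j)\geqslant 2\log 2$ is established in both cases, summing over $j$ and using $L\geqslant L_1$ finishes the proof.
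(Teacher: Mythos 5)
Your proof is correct, and the core observation---that the annulus $N_3(c)\setminus N_0(c)$ has width exactly $\log 2$, so that crossing it costs at least $\log 2$---is the same one the paper uses. The difference is which disjoint family of $m$ subarcs of $\Gamma$ you bound from below: you bound the thick arcs $\delta_j\subseteq\Gamma_1$, whereas the paper bounds the arcs $\gamma_k'$ of $\Gamma\cap\bigcup N_3(c_i)$ that meet $N_0$. The paper's choice is slightly more economical: since $\gamma_k'$ runs from $a_k\in\partial N_3(c_i)$ through a point $x_k\in\gamma_k\subseteq N_0(c_i)$ and back to $b_k\in\partial N_3(c_i)$, and $d(x_k,\partial N_3(c_i))>\log 2$, the deep point $x_k$ automatically separates the two collar crossings and $\ell(\gamma_k')\geqslant d(x_k,a_k)+d(x_k,b_k)\geqslant 2\log 2$ with no case distinction. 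Your dual choice has to rule out the degenerate possibility that a $\delta_j$ with both endpoints on the same $\partial N_0(c)$ stays trapped in $N_3(c)\setminus N_0(c)$; you handle that correctly via the monotonicity of $d(\cdot,\partial N_3(c))$ along components of $\Gamma\cap N_3(c)$ (a fact the paper has already invoked when setting up the arcs $\gamma_k'$), but it is an extra step the paper's choice of arcs avoids.
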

\begin{proof}
For every $1\leqslant k\leqslant m$, there uniquely exists $c_i\in\calx\cup\caly$ such that $\gamma_k\subseteq N_0(c_i)$. Suppose the endpoints of $\gamma_k'$ are $a_k,b_k\in\partial N_3(c_i)$, and $x_k\in\gamma_k$, but $d(x_k,\partial N_3(c_i))>\log2$, so 
$$\ell(\gamma_k')\geqslant d(x_k,a_k)+d(x_k,b_k)\geqslant2\log2$$
Since $\gamma_1',...,\gamma_m'\subseteq\Gamma$ are disjoint arcs, so $m\leqslant\frac{L}{2\log2}$ holds.

\end{proof}

Similar as \cite[Theorem 3.3]{Y2800} we have the estimate of self-intersection number of $\Gamma_1$ as follows.

\begin{thm}
\begin{equation}\label{eqn:estimate_thick_part}
\abs{\Gamma_1\cap\Gamma_1}<\frac12\bracket{\frac{25}{12}L_1+m}^2
\end{equation}
\end{thm}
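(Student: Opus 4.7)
The plan is to partition $\Gamma_1$ into sub-arcs so short that any two distinct ones meet in at most one point, and then bound the self-intersection count by the number of pairs of sub-arcs, following the strategy of \cite[Theorem 3.3]{Y2800}. The geometric input is Lemma~\ref{lemma:inj_radius_outside_nbhds}: at every point of $\caln_T$ the injectivity radius of $\Sigma$ is at least $\log\frac{1+\sqrt 5}{2}>\frac{12}{25}$.

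First, I would subdivide each component $\delta_k$ of $\Gamma_1$ consecutively into $\lceil\frac{25}{12}\ell(\delta_k)\rceil$ sub-arcs of length at most $12/25$. Letting $N$ denote the total number of sub-arcs so produced,
\[
    N \;\leqslant\; \sum_{k=1}^{m}\bracket{\frac{25}{12}\ell(\delta_k)+1} \;=\; \frac{25}{12}L_1+m.
\]
Since each sub-arc has length strictly less than the injectivity radius, it sits inside an embedded disk about any of its points and is in particular embedded in $\Sigma$.

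Next, I would show that any two distinct sub-arcs $\alpha,\alpha'$ share at most one point. If $x\in\alpha\cap\alpha'\subseteq\caln_T$, then because $\mathrm{length}(\alpha),\mathrm{length}(\alpha')\leqslant 12/25$, both sub-arcs lie inside the embedded disk $B(x,12/25)$, which lifts isometrically to a disk in $\bbh^2$. Two distinct geodesic segments in a hyperbolic disk meet in at most one point, so $\#(\alpha\cap\alpha')\leqslant 1$.

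Finally, for each $x\in\cald_1$ the $k_x\geqslant 2$ pre-images on $\Gamma_1$ lie on $k_x$ pairwise distinct sub-arcs, since two distinct pre-images cannot sit on a single embedded sub-arc. The self-intersection at $x$ therefore produces $\binom{k_x}{2}$ unordered pairs of distinct sub-arcs sharing the common point $x$, and by the previous paragraph these assignments are injective as $x$ ranges over $\cald_1$. Hence
\[
    \abs{\Gamma_1\cap\Gamma_1}\;=\;\sum_{x\in\cald_1}\binom{k_x}{2}\;\leqslant\;\binom{N}{2}\;<\;\frac{1}{2}\bracket{\frac{25}{12}L_1+m}^2.
\]
The one subtlety I expect to verify carefully is that subdivision endpoints shared by two consecutive sub-arcs do not spuriously inflate the count; but any such endpoint has a single pre-image on $\Gamma_1$ and contributes $\binom{1}{2}=0$ to $\abs{\Gamma_1\cap\Gamma_1}$, so the bound is unaffected.
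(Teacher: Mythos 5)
Your proposal is correct and follows essentially the same route as the paper: subdivide each arc $\delta_k$ of $\Gamma_1$ into sub-arcs of length below the injectivity-radius bound from Lemma~\ref{lemma:inj_radius_outside_nbhds}, show that any two distinct sub-arcs meet in at most one point via uniqueness of geodesics in $\bbh^2$, and conclude $\abs{\Gamma_1\cap\Gamma_1}\leqslant\binom{N}{2}<\tfrac12 N^2\leqslant\tfrac12\bigl(\tfrac{25}{12}L_1+m\bigr)^2$. One small remark: your closing claim that a shared subdivision endpoint ``has a single pre-image on $\Gamma_1$'' is not justified as stated, since such a point could in principle be a self-intersection point of $\Gamma$ with several pre-images; the paper sidesteps this cleanly by choosing the subdivision (after a small perturbation, possible since the set of pre-images of $\cald_1$ is finite) so that no segment endpoint is a pre-image of an intersection point, and you should do the same rather than argue the contribution away.
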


\begin{proof}

\begin{comment}
Let $\calp_1$ be the preimages of $\cald_1$ with multiplicity, that is, if $x\in\cald_1$ has $n$ preimages, any $s\in f^{-1}(x)$ has multiplicity $(n-1)$ in $\calp_1$. Then
$$\#\calp_1=2\abs{\Gamma_1\cap\Gamma_1}$$
\end{comment}

Recall that $\Gamma$ can be represented by local isometry $f:S^1\rightarrow\Sigma$. For $1\leqslant k\leqslant m$ divide $\delta_k$ into $M(k):=\gauss{\frac{\ell(\delta_k)}{0.48}}+1$ short closed segments with equal length 
$$\frac{\ell(\delta_k)}{M(k)}<0.48<\log\bracket{\frac{1+\sqrt5}2}$$
Suppose $M$ is the number of segments, then
$$M=\sum_{k=1}^{m}M(k)=\sum_{k=1}^{m}\gauss{\frac{\ell(\delta_k)}{0.48}}+1\leqslant\sum_{k=1}^{m}{\frac{\ell(\delta_k)}{0.48}}+1=\frac{25}{12}L_1+m$$
The set of these segments is $S$. Since $\calp_1$ is finite, $S$ can be chosen by small purturbing such that the endpoints of the segments do not contain points in $\calp_1$. 
%Let $S=\{I_1,...,I_{M}\}$ be the intersection of these segments with $f^{-1}(\Gamma_1)$. Each $I_\alpha\in S$ is either empty or a connected segment. 
$f(I_\alpha)\subset\Gamma_1$ is in the thick part and $\ell(I_\alpha)<0.48$, hence $f(I_\alpha)$ has no self-intersections by Lemma \ref{lemma:inj_radius_outside_nbhds}. 

We claim that any two distinct $I_\alpha, I_\beta\in S$ have at most one intersection. If there exist $s_1,s_2\in I_\alpha$ ($s_1\neq s_2$) and $t_1,t_2\in I_\beta$ such that $f(s_1)=f(t_1)$ and $f(s_2)=f(t_2)$. Let $\gamma_\alpha$ be the segment in $I_\alpha$ between $t_1$ and $t_2$, and $\gamma_\beta$ be the segment in $I_\beta$ between $s_1$ and $s_2$. Then $f(\gamma_\alpha)$ and $f(\gamma_\beta)$ are two distinct geodesics between $f(s_1)$ and $f(s_2)$. Since the injectivity radius at $f(s_1)=f(t_1)$ is at least $0.48$ and $\ell(f(\gamma_\alpha)),\ell(f(\gamma_\beta))<0.48$, the existence of $f(\gamma_\alpha)$ and $f(\gamma_\beta)$ contradicts the uniqueness of geodesics in $\bbh^2$.

Since any two distinct $I_\alpha, I_\beta\in S$ contribute at most 1 to $\abs{\Gamma_1\cap\Gamma_1}$, we have
$$\abs{\Gamma_1\cap\Gamma_1}\leqslant\frac12M(M-1)<\frac12M^2\leqslant \frac12\bracket{\frac{25}{12}L_1+m}^2$$

In addition, from the proof we know when $\Gamma_2=\emptyset$, we have
$$\abs{\Gamma\cap\Gamma}=\abs{\Gamma_1\cap\Gamma_1}\leqslant \frac12\bracket{\frac{25}{12}L+1}^2$$

\end{proof}

\section{Self-intersection estimate in thin part}\label{section4}

In this section, we give an estimate on the self-intersection number $|\Gamma_2\cap\Gamma_2|$. We have
$$\abs{\Gamma_2\cap\Gamma_2}=\sum_{p=1}^{m}\abs{\gamma_p\cap\gamma_p}+\sum_{1\leqslant p<q\leqslant m}\abs{\gamma_p\cap\gamma_q}$$

\subsection{Intersection number calculation}

We define the \emph{winding number} $w(\gamma_p)$ of the arc $\gamma_p$ for $1\leqslant p\leqslant m$, the winding number $w(\gamma_0)$ of any arc $\gamma_0\subseteq\gamma_p$, and $w(\gamma_p')$ can be similarly defined. Assume $\gamma_p\subseteq N_0(c_i)$. The definitions are similar as \cite{Y2805} and \cite{EP2802}.

\begin{enumerate}
\item
When $c_i\in\calx$ is a short geodesic, every point of $\gamma_p$ projects orthogonally to a well-defined point of $c_i$. The winding number of $\gamma_p$ is given by the quotient of the length of the projection of $\gamma_p$ divided by $\ell(c_i)$.

\item
When $c_i\in\caly$ is a cusp, every point of $\gamma_p$ projects orthogonally to a well-defined point of the length $h$ horocycle. The winding number of $\gamma_p$ is given by the quotient of the length of the projection of $\gamma_p$ divided by $h$.

\end{enumerate}

If $c_i\in\calx$ is a short geodesic, consider the Poincar\'{e} disk model of $\bbh^2$, the universal covering $p:\bbh^2\to\Sigma$ restricts to a universal covering $p:\Omega\to N_0(c_i)$ of $N_0(c_i)$. We may assume that $p^{-1}(c_i)$ is the horizontal line $\bbh^1\subset\bbh^2$. Let $\widetilde{\gamma_p}$, $\widetilde{\gamma_p'}$ be a lift of $\gamma_p$, $\gamma_p'$. Assume $P_1,P_2\in \widetilde{\gamma_p}$ are endpoints of $\widetilde{\gamma_p}$, $p(P_1),p(P_2)\in\partial N_0(c_i)$, and $P_1',P_2'\in \widetilde{\gamma_p'}$ are endpoints of $\widetilde{\gamma_p'}$, $p(P_1'),p(P_2')\in\partial N_3(c_i)$. $\widetilde{x'}$ is the midpoint of $P_1P_2$.

For $j=1,2$, define $Q_j, Q_j'\in\bbh^1$ are the unique points in $\bbh^1$ satisfies $d(P_j,Q_j)=d(P_j,\bbh^1)=\log\frac2{\ell(c_i)}$ and $d(P_j',Q_j')=d(P_j',\bbh^1)=\log\frac4{\ell(c_i)}$.

\begin{figure}[htbp]
\centering
\tikzexternaldisable
\begin{tikzpicture}
\def\x{4}
\coordinate (O) at (0,0);
\coordinate (P) at (-2.2*\x,0);
\coordinate (Q) at (2.2*\x,0);
\coordinate (K) at (0, 1.3*\x);

\begin{pgfinterruptboundingbox}
\path[name path=circle_O] (O) circle (\x);
\path[name path=PQ] (P) to (Q);
\coordinate[name intersections={of=circle_O and PQ, by={N,M}}];

\path[name path=circle_P] let
  \p1 = ($ (M) - (P) $),  \n1 = {veclen(\x1,\y1)},
  \p2 = ($ (N) - (P) $),  \n2 = {veclen(\x2,\y2)} in
  (P) circle ({sqrt(\n1)*sqrt(\n2)});
\coordinate[name intersections={of=circle_P and PQ, by={A}}];
\coordinate[name intersections={of=circle_P and circle_O, by={G,S}}];

\path[name path=circle_Q] let
  \p1 = ($ (M) - (Q) $),  \n1 = {veclen(\x1,\y1)},
  \p2 = ($ (N) - (Q) $),  \n2 = {veclen(\x2,\y2)} in
  (Q) circle ({sqrt(\n1)*sqrt(\n2)});
\coordinate[name intersections={of=circle_Q and PQ, by={B}}];
\coordinate[name intersections={of=circle_Q and circle_O, by={H,T}}];

\path[name path=circle_K] (K) circle ({sqrt(0.3*2.3)*\x});
\coordinate[name intersections={of=circle_P and circle_K, by={X,C}}];
\coordinate[name intersections={of=circle_Q and circle_K, by={X,D}}];
\coordinate[name intersections={of=circle_O and circle_K, by={E,F}}];

\end{pgfinterruptboundingbox}

\draw (O) circle (\x);
\draw[color=cyan, thin] (180:\x) to (0:\x);

\pgfmathanglebetweenpoints{\pgfpointanchor{P}{center}}{\pgfpointanchor{G}{center}}
\pgfmathsetmacro{\anglePG}{\pgfmathresult}
\draw[name path=arc_GS, color=cyan, thin]
  (-2.5,0) arc (0:-64.1:1.95);
\draw[name path=arc_GSS, color=cyan, thin]
  (-2.5,0) arc (0:64.1:1.95);
\draw[name path=arc_GS, color=cyan, thin]
  (2.5,0) arc (180:115.9:1.95);
\draw[name path=arc_GSS, color=cyan, thin]
  (2.5,0) arc (180:244.1:1.95);
 
\pgfmathanglebetweenpoints{\pgfpointanchor{K}{center}}{\pgfpointanchor{E}{center}}
\pgfmathsetmacro{\angleKE}{\pgfmathresult}
\draw[name path=arc_EF, color=cyan, thin]
  (0,0.5) arc (270:284.3:15.75);
\draw[name path=arc_EF, color=cyan, thin]
  (0,0.5) arc (270:255.7:15.75);
\draw[name path=arc_EF, color=red, thick]
  (0,0.5) arc (270:279.7:15.75);
\draw[name path=arc_EF, color=red,thick]
  (0,0.5) arc (270:260.3:15.75);

\draw[color=cyan] (O) to (0,0.5);

\node[below=0.3, anchor=center, color=blue] at (-0.8,1.2) {$\widetilde{\gamma}_p$};

\path (-2.5,0) node[circle, fill, inner sep=1pt, label={[shift={(305:0.45)}, anchor=center]{$Q_1$}}]{};
\path (2.5,0) node[circle, fill, inner sep=1pt, label={[shift={(235:0.45)}, anchor=center]{$Q_2$}}]{};
\path (-3,0) node[circle, fill, inner sep=1pt, label={[shift={(235:0.45)}, anchor=center]{$Q_1'$}}]{};
\path (3,0) node[circle, fill, inner sep=1pt, label={[shift={(305:0.45)}, anchor=center]{$Q_2'$}}]{};

\path (-2.2,1.3) node[circle, inner sep=1pt, label={[shift={(235:0.45)}, anchor=center]{$P_1$}}]{};
\path (2.2,1.3) node[circle, inner sep=1pt, label={[shift={(305:0.45)}, anchor=center]{$P_2$}}]{};
\path (-3.2,0.8) node[circle, fill, inner sep=1pt, label={[shift={(90:0.45)}, anchor=center]{$P_1'$}}]{};
\path (3.2,0.8) node[circle, fill, inner sep=1pt, label={[shift={(90:0.45)}, anchor=center]{$P_2'$}}]{};

\path (2.2,-1.3) node[circle, inner sep=1pt, label={[shift={(45:0.45)}, anchor=center]{$P_2''$}}]{};

\path (O) node[circle, fill, inner sep=1pt, label={[shift={(270:0.3)}, anchor=center]{$O$}}]{};
\path (0,0.5) node[circle, inner sep=1pt, label={[shift={(90:0.3)}, anchor=center]{$\widetilde{x'}$}}]{};

\draw [color=red, dashed, thin] (-2.6,0.7) to (2.6,-0.7);

\end{tikzpicture}
\caption{\label{fig:boundary_length_annulus}
A covering of $N_0(c_i)$ when $c_i\in\calx$}
\end{figure}

When $c_i$ is a cusp, consider the projection map $p$ from the upper half plane model of $\bbh^2$ to $\Sigma$, maps the ideal triangle $\infty A_0B_0$ to $N_0(c_i)$. Assume $A_0(-1,2)$, $B_0(1,2)$, $A_0'(-1,1)$, $B_0'(1,1)$ in Euclidean coordinate. $\widetilde{\gamma_p}$, $\widetilde{\gamma_p'}$ is a lift of the arc $\gamma_p$, $\gamma_p'$. Assume $\widetilde\gamma_p$ is the arc $x^2+y^2=R^2, y>2$ in Euclidean coordinate and $\widetilde\gamma_p$ is the arc $x^2+y^2=R^2, y>2$. $P_1(-\sqrt{R^2-4},2)$ $P_2(\sqrt{R^2-4},2)$ are endpoints of $\widetilde{\gamma_p'}$ and $P_1'(-\sqrt{R^2-1},1)$ $P_2'(\sqrt{R^2-1},1)$ are endpoints of $\widetilde{\gamma_p'}$. The hyperbolic length of the arc $P_1P_2$ is $\ell(P_1P_2)=\ell(\gamma_p)$. $\widetilde{x'}$ is the midpoint of $P_1P_2$.

\begin{lemma}\label{lemma_31}
For $1\leqslant p\leqslant m$, when $\gamma_p$ is of general case, we have
  $$\abs{\gamma_p\cap\gamma_p}<w(\gamma_p)$$
When $\gamma_p$ is of special case, 
$\abs{\gamma_p\cap\gamma_p}=0$.
\end{lemma}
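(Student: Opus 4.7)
The plan is to lift $\gamma_p$ to the universal cover $\Omega$ of $N_0(c_i)$ and count self-intersections through the deck group action. The group is cyclic, generated by $T$: either the hyperbolic translation with axis $\bbh^1$ and translation length $\ell(c_i)$ (when $c_i\in\calx$), or a parabolic translation moving the distinguished horocycle by its length $h$ (when $c_i\in\caly$). Fix a lift $\widetilde{\gamma}_p$; it is a geodesic arc with endpoints on the lift of $\partial N_0(c_i)$. A self-intersection of $\gamma_p$ at $x\in N_0(c_i)$ corresponds to distinct parameters $s,t$ with $T^k\widetilde{\gamma}_p(s)=\widetilde{\gamma}_p(t)$ for a unique nonzero $k\in\bbz$. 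Since two distinct geodesic arcs in $\bbh^2$ meet in at most one point, each $k\ne 0$ contributes at most one pair, and the involution $k\mapsto -k$ identifies the pair in $\Sigma$, giving
$$\abs{\gamma_p\cap\gamma_p}=\#\bigl\{k\in\bbz_{>0}:T^k\widetilde{\gamma}_p\cap\widetilde{\gamma}_p\neq\emptyset\text{ inside }\Omega\bigr\}.$$

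In the special case, $\widetilde{\gamma}_p$ crosses $\bbh^1$, so the signed distance $h(s)$ from $\widetilde{\gamma}_p(s)$ to $\bbh^1$ is strictly monotonic. Since $T$ preserves signed distance to $\bbh^1$, any intersection $\widetilde{\gamma}_p(s_1)=T^k\widetilde{\gamma}_p(s_2)$ would force $h(s_1)=h(s_2)$, hence $s_1=s_2$, and then matching projections forces $k=0$. Thus $\abs{\gamma_p\cap\gamma_p}=0$.

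In the general case, $\widetilde{\gamma}_p$ lies on one side of $\bbh^1$ with both endpoints at distance $\log(2/\ell(c_i))$ from $\bbh^1$ (or, for a cusp, at the same horocyclic height), and the geodesic arc is symmetric about its midpoint, so the unsigned distance satisfies $h(s)=h(L-s)$ where $L=\ell(\gamma_p)$. Let $Q(s)$ denote the orthogonal projection of $\widetilde{\gamma}_p(s)$ to $\bbh^1$ (respectively to the horocycle); a standard Fermi-coordinate computation shows $Q$ is strictly monotonic, and the length of its image equals $w(\gamma_p)\cdot\ell(c_i)$ (resp.\ $w(\gamma_p)\cdot h$) by definition of the winding number. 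An intersection $\widetilde{\gamma}_p(s)=T^k\widetilde{\gamma}_p(t)$ forces $h(s)=h(t)$, so $s+t=L$, and matching projections gives $f(s):=Q(s)-Q(L-s)=k\ell(c_i)$. Since $f$ is strictly monotonic on $[0,L]$ with $f(0)=-w(\gamma_p)\ell(c_i)$ and $f(L)=w(\gamma_p)\ell(c_i)$, the admissible positive integers are exactly those with $0<k<w(\gamma_p)$, and their count is at most $\lceil w(\gamma_p)\rceil-1<w(\gamma_p)$.

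The main obstacle is establishing the geometric monotonicity claims: strict monotonicity of the signed distance $h$ in the special case and of the orthogonal projection $Q$ in the general case. Both follow from Fermi coordinates once the target (a geodesic or a horocycle) and the arc are parametrized consistently, but the cusp-versus-short-geodesic bookkeeping must be handled separately. A secondary check is that the unique intersection point found in $\bbh^2$ for $|k|<w(\gamma_p)$ actually lies in $\Omega$, which is automatic from the endpoint constraints on $\widetilde{\gamma}_p$.
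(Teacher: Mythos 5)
Your proof is correct and takes essentially the same approach as the paper: lift to the cover of $N_0(c_i)$, exploit the deck-group action (the paper phrases this as pairs $P,P'\in\widetilde{\gamma}_p$ with $p(P)=p(P')$), use monotonicity of the signed distance / Fermi projection to rule out the special case and to reduce the general case to counting sub-arcs of integer winding number, and conclude the count is the number of positive integers strictly below $w(\gamma_p)$. The paper states these monotonicity and winding-number facts tersely; your write-up just makes the deck-transformation bookkeeping and the strict-monotonicity arguments explicit.
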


\begin{proof}

  Notice that when $P$ goes from $p(P_1)$ to $p(P_2)$ along $\gamma_p$, then the function $d(P,\partial N_3(c_i))$ first increases and then decreases. Assume $P,P'\in\widetilde\gamma_p$, if $p(P)=p(P')$, then $\gamma_p$ must be of general case. $p(P)=p(P')$ if and only if the winding number of the arc $PP'\subseteq\gamma_p$ is a positive integer. Then the self intersection number of $\gamma_p$ equals to the number of positive integers less than $w(\gamma_p)$. Hence the lemma holds.
\end{proof}

\begin{figure}[htbp]
\begin{center}
\tikzexternaldisable
\begin{tikzpicture}
\def\x{1.5}

\draw[thick] (-1,0) to (-1,4*\x);
\draw[thick] (1,0) to (1,4*\x);

\draw[dashed] (4,0) arc (0:180:4);
\draw[dashed] (-4.5,1) to (4.5,1);
\draw[dashed] (-4.5,2) to (4.5,2);

\draw[dashed] (-3*\x,0) to (3*\x,0);

\draw[thick, color=red] (3.464,2) arc (30:150:4);

%\path (O) node[circle, fill, inner sep=1pt, label={[shift={(270:0.45)}, anchor=center]{$O$}}]{};
\path (0,0) node[circle, fill, inner sep=1pt, label={[shift={(270:0.45)}, anchor=center]{$O$}}]{};
\path (-1,1) node[circle, fill, inner sep=1pt, label={[shift={(135:0.45)}, anchor=center]{$A_0'$}}]{};
 \path (1,1) node[circle, fill, inner sep=1pt, label={[shift={(45:0.45)}, anchor=center]{$B_0'$}}]{};
\path (-1,2) node[circle, fill, inner sep=1pt, label={[shift={(135:0.45)}, anchor=center]{$A_0$}}]{};
 \path (1,2) node[circle, fill, inner sep=1pt, label={[shift={(45:0.45)}, anchor=center]{$B_0$}}]{};
\path (-3.464,2) node[circle, fill, inner sep=1pt, label={[shift={(135:0.45)}, anchor=center]{$P_1$}}]{};
 \path (3.464,2) node[circle, fill, inner sep=1pt, label={[shift={(45:0.45)}, anchor=center]{$P_2$}}]{};
\path (-3.873,1) node[circle, fill, inner sep=1pt, label={[shift={(135:0.45)}, anchor=center]{$P_1'$}}]{};
 \path (3.873,1) node[circle, fill, inner sep=1pt, label={[shift={(45:0.45)}, anchor=center]{$P_2'$}}]{};
\path (0,4) node[circle, fill, inner sep=1pt, label={[shift={(90:0.45)}, anchor=center]{$\widetilde{x'}$}}]{};

\draw (0,4*\x) node[above] {$\infty$};

%\path (-1.5*\x,\x) node[circle, fill, inner sep=1pt, label={[shift={(180:0.45)}, anchor=center]{$P_1$}}]{};
%\path (1.5*\x,\x) node[circle, fill, inner sep=1pt, label={[shift={(0:0.45)}, anchor=center]{$Q_1$}}]{};
%\path (Q1) node[circle, fill, inner sep=1pt, label={[shift={(315:0.45)}, anchor=center]{$Q_1$}}]{};
	  			
\end{tikzpicture}
\end{center}
\tikzexternalenable
\caption{\label{fig:nbhd_cusps222}
A covering of $N_0(c_i)$ when $c_i\in\caly$}
\end{figure}

\begin{lemma}\label{lemma_32}

\begin{enumerate}
\item
   $w(\gamma_p)\leqslant 2\sinh\bracket{\frac{\ell(\gamma_p)}2}$. Similarly $w(\gamma_p')\leqslant \sinh\bracket{\frac{\ell(\gamma_p')}2}$.

\item
   For any $p$ such that $\gamma_p'\cap N_0(c_i)\neq\emptyset$ and $\gamma_p'$ is of general case, then $\ell(\gamma_p')\geqslant2\log(2+\sqrt3)$.

\item
For any $p$ such that $c_i\in\calx$, $\gamma_p\cap N_3(c_i)\neq\emptyset$, then $w(\gamma_p)\leqslant\frac{\ell(\gamma_p)}{\ell(c_i)}$.
\end{enumerate}
\end{lemma}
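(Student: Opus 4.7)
The plan is to handle the three parts separately, with part (3) being a simple Lipschitz argument and parts (1) and (2) following from an explicit integration in the universal cover.

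For part (3): for $c_i\in\calx$, the orthogonal projection onto the closed geodesic $c_i$ is $1$-Lipschitz, since in Fermi coordinates $(s,r)$ around $c_i$ the metric reads $dr^2+\cosh^2(r)\,ds^2$ and the projection $(s,r)\mapsto(s,0)$ contracts by factor $1/\cosh r\leqslant 1$. Thus the length of the projection of $\gamma_p$ onto $c_i$ is at most $\ell(\gamma_p)$, giving $w(\gamma_p)\leqslant\ell(\gamma_p)/\ell(c_i)$ after dividing by $\ell(c_i)$.

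For part (1), I would split into two cases. For the cusp case, the upper half-plane picture already set up in the paper gives the answer on the nose. The lift $\widetilde{\gamma_p}$ is the semicircle $x^2+y^2=R^2$ with $y>2$; its hyperbolic length is $2\cosh^{-1}(R/2)$ and its vertical projection onto the horocycle $y=2$ has hyperbolic length $\sqrt{R^2-4}$, which when divided by the quotient horocycle length $1$ gives $w(\gamma_p)=\sqrt{R^2-4}=2\sinh(\ell(\gamma_p)/2)$. The analogous calculation at height $1$ (horocycle of length $2$) yields $w(\gamma_p')=\sinh(\ell(\gamma_p')/2)$. For the short-geodesic case, work in Fermi coordinates so that $c_i$ is the geodesic $r=0$; by symmetry the midpoint of $\widetilde{\gamma_p}$ lies at distance $r_{\min}$ from $c_i$, and integrating the Clairaut relation gives $\cosh(\ell(\gamma_p)/2)=\sinh D_0/\sinh r_{\min}$ together with the clean identity
$$\sinh\!\left(\frac{\ell(c_i)\,w(\gamma_p)}{2}\right) = \frac{\sinh(\ell(\gamma_p)/2)}{\cosh D_0},$$
where $D_0=\log(2/\ell(c_i))$. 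Since $\sinh^{-1}(x)\leqslant x$ and $1/\cosh D_0=\ell(c_i)/(1+\ell(c_i)^2/4)\leqslant\ell(c_i)$, this immediately yields $w(\gamma_p)\leqslant 2\sinh(\ell(\gamma_p)/2)$. The same argument applied in $N_3(c_i)$ uses $D_3=\log(4/\ell(c_i))$ and $1/\cosh D_3=\ell(c_i)/(2+\ell(c_i)^2/8)\leqslant\ell(c_i)/2$, giving $w(\gamma_p')\leqslant\sinh(\ell(\gamma_p')/2)$.

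For part (2): since $\gamma_p'$ is of general case and meets $N_0(c_i)$, in the universal cover it enters through $\partial N_3(c_i)$, reaches a minimum distance $r_{\min}\leqslant D_0$ to $c_i$, and returns to $\partial N_3(c_i)$. For the cusp case the lift is a semicircle of radius $R\geqslant 2$ with endpoints at height $1$, and $\ell(\gamma_p')=2\cosh^{-1}(R)\geqslant 2\cosh^{-1}(2)=2\log(2+\sqrt{3})$. For $c_i\in\calx$, the Fermi length formula gives $\ell(\gamma_p')\geqslant 2\cosh^{-1}(\sinh D_3/\sinh D_0)$, and a direct computation yields $\sinh D_3/\sinh D_0=(16-\ell(c_i)^2)/(8-2\ell(c_i)^2)\geqslant 2$ with equality only in the cusp limit (the inequality reduces to $3\ell(c_i)^2\geqslant 0$). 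Hence in all cases $\ell(\gamma_p')\geqslant 2\log(2+\sqrt{3})$.

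The central computation is the Fermi-coordinate integration in part (1) yielding the identity $\sinh(\ell(c_i)w(\gamma_p)/2)=\sinh(\ell(\gamma_p)/2)/\cosh D_0$; once this is in hand, the remaining inequalities all follow from elementary bounds ($\sinh^{-1}(x)\leqslant x$, $\cosh D_0\geqslant 1/\ell(c_i)$) together with the definitions of $D_0$ and $D_3$.
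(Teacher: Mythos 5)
Your proposal is essentially correct in its main thrust and recovers the paper's key identity (the paper writes it as a Lambert-quadrilateral relation in the Poincar\'e disk; you write it as a Fermi-coordinate/Clairaut integration — they are the same formula). Your treatment of part~(3) via the $1$-Lipschitz orthogonal projection in Fermi coordinates is cleaner and more conceptual than the paper's, which derives it as a corollary of the Lambert identity together with monotonicity of $\sinh$, and your Lipschitz argument has the advantage of applying uniformly to both general and special arcs. Parts~(2) and the cusp half of part~(1) match the paper almost verbatim.

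There is, however, a genuine gap in part~(1) for $c_i\in\calx$: your Fermi argument assumes "by symmetry the midpoint of $\widetilde\gamma_p$ lies at distance $r_{\min}$ from $c_i$," which is only true for a \emph{general case} arc, i.e.\ one that stays on a single side of $c_i$ and has a turning point at $r_{\min}>0$. Part~(1) as stated carries no such hypothesis (unlike part~(2), which explicitly says "general case"), and the paper separately handles the \emph{special case}: when $\gamma_p$ runs from one boundary component of $N_0(c_i)$ to the other, the geodesic crosses $c_i$, there is no apex, and the identity
$\sinh\bigl(\ell(c_i)w(\gamma_p)/2\bigr)\cosh D_0=\sinh\bigl(\ell(\gamma_p)/2\bigr)$
does not apply. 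The paper fixes this by reflecting one endpoint $P_2''$ across $c_i$ to a point $P_2$ on the same side as $P_1$, observing that reflection preserves the $s$-projection so the chord $P_1P_2$ has the same winding number as $\gamma_p$, while $d(P_1,P_2)\leqslant d(P_1,O)+d(O,P_2)=\ell(\gamma_p)$; it then applies the general-case bound to the chord. You should add this folding step, or alternatively observe that the special-case bound also follows from your projection argument together with the explicit form of $\cosh D_0$ (the projection of $\gamma_p$ onto $c_i$ has length $\leqslant\ell(\gamma_p)/\cosh D_0$ because $|s'|\leqslant|\gamma_p'|/\cosh r$ everywhere and $r$ is bounded away from... no, this does not close, since $r$ passes through $0$). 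The folding argument really is needed.
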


\begin{proof}

 \begin{enumerate}

\item
If $c_i\in\caly$ is a cusp, we have
$$\frac{\ell(\gamma_p)}2=\log\bracket{\frac{\sqrt{R^2-4}+R}{2}}=\log\bracket{\sqrt{(\frac{R}2)^2-1}+\frac{R}2}$$
Hence
$$w(\gamma_p)=\sqrt{R^2-4}=2\sinh\bracket{\frac{\ell(\gamma_p)}2}$$
On the other hand, when $\gamma_p=\gamma_p'\cap N_0(c_i)\neq\emptyset$, then $R>2$, hence
$$\ell(\gamma_p')=\ell(\widetilde{\gamma_p'})=\ell(P_1'P_2')=2\log(R+\sqrt{R^2-1})>2\log(2+\sqrt3)$$

\item
If $c_i\in\calx$ and $\gamma_p$ of general case,as in Figure \ref{fig:boundary_length_annulus}, $\widetilde{x'}$ is the midpoint of $P_1P_2$, since $p(P_1),p(P_2)\in\partial N_0(c_i)$, then $d(P,\bbh^1)=d(p(P),c_i)=\log\frac2{\ell(c_i)}$. From the definition of winding number, $d(Q_1,O)=\frac12d(Q_1,Q_2)=\frac{\ell(c_i)}2\cdot w(\gamma_p)$. The geodesic $P_1Q_1$ from $P_1$ to $Q_1$, horizontal line $Q_1O$, vertical line $O\widetilde{x'}$ and the left half of geodesic $\widetilde{\gamma_1}$ form a Lambert quadrilateril, the property of Lambert quadrilateril gives
\begin{align*}
\sinh\frac{\ell(\gamma_p)}2&=\sinh\bracket{\frac{\ell(c_i)}2\cdot w(\gamma_1)}\cosh\bracket{\log\frac2{\ell(c_i)}}\\
&\geqslant\frac{\ell(c_i)}2\cdot w(\gamma_1)\cdot\frac12\bracket{\frac2{\ell(c_i)}+\frac{\ell(c_i)}2}\geqslant\frac12w(\gamma_1)
\end{align*}
$w(\gamma_p)\leqslant\frac{\ell(\gamma_p)}{\ell(c_i)}$ follows from
\begin{align*}
\sinh\frac{\ell(\gamma_p)}2&=\sinh\bracket{\frac{\ell(c_i)}2\cdot w(\gamma_1)}\cosh\bracket{\log\frac2{\ell(c_i)}}\geqslant\sinh\bracket{\frac{\ell(c_i)}2\cdot w(\gamma_1)}
\end{align*}

On the other hand, if $\gamma_p=\gamma_p'\cap N_0(c_i)\neq\emptyset$, then $d(P_1',Q')=\log\frac4{\ell(c_i)}$ and $d(O,\widetilde{x'})<\log\frac2{\ell(c_i)}$. The property of Lambert quadrilateril gives
\begin{align*}
  \cosh{\frac{\ell(\gamma_p')}2}=\cosh{d(P_1',\widetilde{x'})}=\frac{\sinh{d(P_1',Q_1')}}{\sinh{d(O,\widetilde{x'})}}>\frac{\sinh{\log\frac4{\ell(c_i)}}}{\sinh{\log\frac2{\ell(c_i)}}}=\frac{16-\ell(c_i)^2}{8-2\ell(c_i)^2}>2
\end{align*}
Hence $\frac{\ell(\gamma_p')}2>\log(2+\sqrt3)$.

\item
If $c_i\in\calx$ and $\gamma_1$ of special case, as in Figure \ref{fig:boundary_length_annulus}, then $\widetilde{\gamma_1}$ is the dashed line $P_1OP_2''$, and $P_2''$ is the symmetry point of $P_2$, hence $P_1,P_2$ is on the same side of $\bbh^1$ and $P_2''$ is on the other side. We have
$$l_1=d(P_1,O)+d(O,P_2'')=d(P_1,O)+d(O,P_2)\geqslant d(P_1,P_2)$$
Hence from the proof of case 2 above
$$w(\gamma_1)\leqslant 2\sinh\bracket{\frac{d(P_1,P_2)}2}\leqslant 2\sinh\bracket{\frac{l_1}2}\qquad w(\gamma_1)\leqslant\frac{d(P_1,P_2)}{\ell(c_i)}<\frac{\ell(\gamma_1)}{\ell(c_i)}$$

\end{enumerate}

\end{proof}

If $\gamma_p,\gamma_q\subseteq N_0(c_i)$, we have conclusions about the intersection number $\abs{\gamma_p\cap\gamma_q}$:

\begin{lemma}
  Let $\gamma_1,\gamma_2$ be two distinct curves of general case in $\Gamma_2$ and $\gamma_3,\gamma_4$ be two distinct curves of special case, and $w(\gamma_1)\leqslant w(\gamma_2)$, $w(\gamma_3)\leqslant w(\gamma_4)$. Then
\begin{enumerate}
\item $\abs{\gamma_1\cap\gamma_3}\leqslant\lceil{w(\gamma_1)}\rceil$
\item $\abs{\gamma_1\cap\gamma_1}\leqslant\lceil{w(\gamma_1)}\rceil-1$
\item $\abs{\gamma_1\cap\gamma_2}\leqslant2\lceil{w(\gamma_1)}\rceil$
\item $\abs{\gamma_3\cap\gamma_4}\leqslant\lceil\frac{\ell(\gamma_3)+\ell(\gamma_4)}{\ell(c_i)}\rceil$
\end{enumerate}
\end{lemma}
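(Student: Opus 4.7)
The plan is to lift everything to the universal cover $p\colon\Omega\to N_0(c_i)$. Assume all four arcs lie in the same $N_0(c_i)$ (otherwise the pairwise intersections vanish trivially); since the special case requires $c_i\in\calx$, whenever $\gamma_3$ or $\gamma_4$ appears, $c_i$ is a short geodesic, the deck group is cyclic generated by a hyperbolic translation $\phi$ of the axis $\bbh^1$ (the lift of $c_i$) with step $\ell(c_i)$, and $\Omega$ is the infinite strip around $\bbh^1$ bounded by two equidistant curves $\partial\Omega_+,\partial\Omega_-$. Each arc $\gamma_p$ lifts to a geodesic arc $\widetilde\gamma_p\subset\Omega$, with full preimage $\bigcup_n\phi^n(\widetilde\gamma_p)$. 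Since two distinct geodesic arcs in $\bbh^2$ meet in at most one point, fixing a reference lift gives
$$\abs{\gamma_p\cap\gamma_q}=\#\bigl\{n\in\bbz:\widetilde\gamma_p\cap\phi^n(\widetilde\gamma_q)\neq\emptyset\bigr\},$$
and it suffices to bound this set of integers in each case.

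For (1), choose the lift so that both endpoints $P_1,P_2$ of $\widetilde\gamma_1$ lie on $\partial\Omega_+$; then each $\phi^n(\widetilde\gamma_3)$ has endpoints $R_1^n\in\partial\Omega_+$ and $R_2^n\in\partial\Omega_-$. The closed curve $\widetilde\gamma_1\cup[P_1,P_2]$ bounds a region $D_1$ in the upper half of $\Omega$. Because each geodesic arc has its interior in $\Omega^\circ$ (an interior point cannot land on an equidistant-curve boundary component), a Jordan-curve argument forces $\widetilde\gamma_1\cap\phi^n(\widetilde\gamma_3)\neq\emptyset$ precisely when $R_1^n\in(P_1,P_2)$: the arc from $R_1^n$ to $R_2^n$ must exit $D_1$ through $\widetilde\gamma_1$. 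The orthogonal projection $\pi([P_1,P_2])$ on $\bbh^1$ has length $\ell(c_i)w(\gamma_1)$, while $\pi(R_1^n)$ forms an arithmetic progression with spacing $\ell(c_i)$, so at most $\lceil w(\gamma_1)\rceil$ values of $n$ qualify. Part (2) is immediate from Lemma \ref{lemma_31}: the bound $\abs{\gamma_1\cap\gamma_1}<w(\gamma_1)$ combined with integrality gives $\abs{\gamma_1\cap\gamma_1}\leqslant\lceil w(\gamma_1)\rceil-1$.

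For (3), discard the trivial case where $\gamma_1,\gamma_2$ lie on opposite sides of $c_i$ and arrange both lifts to have all endpoints on $\partial\Omega_+$. Two geodesic arcs in the upper half-strip with all four endpoints on $\partial\Omega_+$ intersect iff their endpoints interlink on $\partial\Omega_+$; equivalently, exactly one of $Q_1^n,Q_2^n$ lies in $(P_1,P_2)$ (the case of both inside is excluded because a geodesic arc cannot cross $\widetilde\gamma_1$ twice). Each single-endpoint containment holds for at most $\lceil w(\gamma_1)\rceil$ values of $n$ by the progression argument of (1), so the symmetric difference has at most $2\lceil w(\gamma_1)\rceil$ elements. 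For (4), each of $\widetilde\gamma_3,\phi^n(\widetilde\gamma_4)$ has one endpoint on each of $\partial\Omega_\pm$, and arcs cross iff the endpoint order reverses between the two boundaries, i.e., $(\pi(R_1)-\pi(S_1^n))(\pi(R_2)-\pi(S_2^n))<0$. Setting $a_n=\pi(S_1^n)-\pi(R_1)$, the quantity $a_n-(\pi(S_2^n)-\pi(R_2))$ is $n$-independent and in absolute value bounded by $\ell(c_i)(w(\gamma_3)+w(\gamma_4))$, whereas $a_n$ shifts by $\ell(c_i)$ per increment of $n$; the sign constraint confines $a_n$ to an interval of that length, and Lemma \ref{lemma_32}(3) then converts the resulting bound into the stated $\lceil(\ell(\gamma_3)+\ell(\gamma_4))/\ell(c_i)\rceil$.

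The main obstacle will be nailing down the Jordan and endpoint-interlink criteria cleanly enough to rule out the double-crossing scenarios (which are prevented by the uniqueness of intersection of two geodesic arcs in $\bbh^2$) and to ensure the correct ceiling constants rather than off-by-one errors. The arithmetic-progression counting with step $\ell(c_i)$ and the monotonicity of orthogonal projections onto $\bbh^1$ for special-case arcs then follow routinely from the Lambert-quadrilateral trigonometry already exploited in the proof of Lemma \ref{lemma_32}.
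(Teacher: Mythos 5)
Your proof is correct but takes a genuinely different route from the paper's. For parts (1) and (3) the paper simply cites \cite[Lemma 3.2]{EP2802}; you instead give a self-contained argument by lifting to the universal cover of $N_0(c_i)$, using the at-most-one-intersection property of hyperbolic geodesics together with Jordan-curve separation and arithmetic-progression counting along the axis. For part (4) the paper's proof is shorter and more elementary: it observes that the segments $\epsilon_k^3\cup\epsilon_k^4$ between consecutive intersection points are noncontractible closed curves in the annulus $N_0(c_i)$, hence each has length at least $\ell(c_i)$, and the bound follows by summing lengths. Your argument for (4) is the deck-transformation count combined with the projection bound from Lemma~\ref{lemma_32}(3); it actually yields the intermediate bound $\lceil w(\gamma_3)+w(\gamma_4)\rceil$, which is no larger than $\lceil(\ell(\gamma_3)+\ell(\gamma_4))/\ell(c_i)\rceil$ — so you get a sharper statement en route. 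Part (2) you prove exactly as the paper does, via Lemma~\ref{lemma_31} plus integrality.

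One loose end to tidy up: your preamble sets up the deck group as a cyclic group generated by a \emph{hyperbolic} translation of the axis $\bbh^1$, which is justified whenever a special-case arc is present (so $c_i\in\calx$). But inequality (3) involves only general-case arcs $\gamma_1,\gamma_2$, which can also sit inside a cusp neighborhood $N_0(c_i)$ with $c_i\in\caly$. There the deck group is generated by a parabolic and the role of $\bbh^1$ is played by a horocycle; the interlinking and spacing argument still goes through verbatim with the horocyclic arclength replacing $\ell(c_i)$ as the step, but as written your setup silently excludes this case. Stating the parallel parabolic case explicitly would close the gap. With that addition, the proof is complete and arguably preferable to the paper's in that it does not outsource (1) and (3) to an external reference.
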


\begin{proof}
 The first and the third inequality follows from \cite[Lemma 3.2]{EP2802} and the second is from Lemma \ref{lemma_31}. We only need to prove the fourth. Suppose $\gamma_3\subseteq N_0(c_i)$, $\gamma_4\subseteq N_0(c_{i'})$. If $i\neq i'$ then $\abs{\gamma_3\cap\gamma_4}=0$, next suppose $i=i'$.

Suppose $P_3,P_3'$ are endpoints of $\gamma_3$, and $P_4,P_4'$ are endpoints of $\gamma_4$, and $P_3,P_4$ are on the same boundary component of $\partial N_0(c_i)$ and $P_3',P_4'$ are on the another. Suppose the self-intersection points are $R_1,...,R_t\in\gamma_3$ from $P_3$ to $P_3'$, and define $\epsilon_k^3$, $\epsilon_k^4$ are the geodesic between $R_k,R_{k+1}$ along $\gamma_3$, $\gamma_4$. Since $\epsilon_k^3\cup\epsilon_k^4\subseteq N_0(c_i)$ is a closed curve freely homotopic to $c_i$, hence $\ell(\epsilon_k^3)+\ell(\epsilon_k^4)\geqslant\ell(c_i)$, hence
$$\abs{\gamma_3\cap\gamma_4}<1+\frac{\ell(\gamma_3)+\ell(\gamma_4)}{\ell(c_i)}$$
Since $\abs{\gamma_3\cap\gamma_4}$ is an integer, the fourth is proved.

\end{proof}

By abuse of notation assume there are $m_0$ arcs $\gamma_1,...,\gamma_{m_0}$ in $\Gamma_2$ with general case, suppose $w(\gamma_1)\leqslant...\leqslant w(\gamma_{m_0})$. And $m_2$ arcs $\gamma_{m_0+1},...,\gamma_{m}$ of special case. Set $m=m_0+m_2$. Define
$$\Gamma_2'=\bigcup_{j=1}^{m_0}\gamma_j\qquad \Gamma_2''=\bigcup_{j=m_0+1}^{m}\gamma_j\qquad L_2'=\sum_{j=1}^{m_0}\ell(\gamma_j)\qquad L_2''=\sum_{j=m_0+1}^{m}\ell(\gamma_j)$$
Clearly $\Gamma_2=\Gamma_2'\cup\Gamma_2''$, $L_2=L_2'+L_2''$. 
Similar as \cite[Lemma 2.5]{Y2805} we have
\begin{thm}
\begin{align}\label{ttt}
  \abs{\Gamma_2\cap\Gamma_2}\leqslant(2m-1)w(\gamma_1)+...+(2m_2+1)w(\gamma_{m_0})+{L_2''}e^{\frac{L_2''}4}+m^2-m
\end{align}
\end{thm}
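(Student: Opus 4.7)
The plan is to substitute the appropriate bound from Lemma \ref{lemma_31} and the four-part lemma just established into each term of
$$\abs{\Gamma_2\cap\Gamma_2}=\sum_{p=1}^{m}\abs{\gamma_p\cap\gamma_p}+\sum_{1\leqslant p<q\leqslant m}\abs{\gamma_p\cap\gamma_q}$$
and then to group the contributions by $w(\gamma_j)$ for each general-case arc.

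The self-intersection of a general-case arc $\gamma_j$ with $1\leqslant j\leqslant m_0$ contributes at most $\lceil w(\gamma_j)\rceil-1$, while each special-case arc contributes $0$. For the pair sum I distinguish three subcases: a general-general pair $(\gamma_p,\gamma_q)$ with $p<q\leqslant m_0$ contributes at most $2\lceil w(\gamma_p)\rceil$ (since $w(\gamma_p)\leqslant w(\gamma_q)$); a general-special pair contributes at most $\lceil w(\gamma_p)\rceil$, which I deliberately weaken to $2\lceil w(\gamma_p)\rceil$ in order to make the coefficient of each $\gamma_j$ uniform; and a special-special pair sharing a common $N_0(c_i)$ contributes at most $\lceil(\ell(\gamma_p)+\ell(\gamma_q))/\ell(c_i)\rceil$, with pairs lying in different $N_0(c_i)$'s contributing nothing. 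Collecting terms, the arc $\gamma_j$ picks up total coefficient $1+2(m_0-j)+2m_2=2(m-j)+1$ in front of $\lceil w(\gamma_j)\rceil$; passing from $\lceil w\rceil$ to $w+1$ yields the announced $(2(m-j)+1)w(\gamma_j)$ contribution plus an arithmetic contribution to the constant.

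The special-special contribution is the origin of the exponential factor $L_2''e^{L_2''/4}$. The geometric input is that any special arc $\gamma_p$ in $N_0(c_i)$ must cross the entire annulus, each side of which has hyperbolic width $\log(2/\ell(c_i))$; hence $\ell(\gamma_p)\geqslant 2\log(2/\ell(c_i))$, equivalently $1/\ell(c_i)\leqslant\tfrac12 e^{\ell(\gamma_p)/2}$. Grouping special arcs by the neighborhood $N_0(c_i)$ they inhabit, summing $\lceil(\ell(\gamma_p)+\ell(\gamma_q))/\ell(c_i)\rceil$ over pairs, and invoking this inequality should yield the required exponential bound.

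The main obstacle is extracting the sharp exponent $L_2''/4$ rather than the naive $L_2''/2$ afforded directly by the inequality above. I expect the improvement to come from the following observation: if $k\geqslant 2$ special arcs share a single neighborhood $N_0(c_i)$ with total length $L_i''$, their minimum length satisfies $\ell_{\min}\leqslant L_i''/k$, giving $1/\ell(c_i)\leqslant\tfrac12 e^{L_i''/(2k)}$; when combined with the pair count $\binom{k}{2}$ and then summed over distinct $c_i$'s using $\sum_iL_i''=L_2''$, the effective exponent becomes at most $L_2''/(2k)\leqslant L_2''/4$ for $k\geqslant 2$, and the linear prefactor matches $L_2''$. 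Once this is in place, the residual constants from the ceilings in the three families of pair bounds, together with the $-m_0$ from the self-intersection bounds, combine by direct arithmetic to at most $m^2-m$, completing the proof.
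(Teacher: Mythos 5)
Your framework is correct and matches the paper's: decompose $\abs{\Gamma_2\cap\Gamma_2}$ into self-intersections plus pairs, apply Lemma~\ref{lemma_31} and the four-part lemma to each, note that a general-case arc $\gamma_j$ acquires coefficient $1+2(m_0-j)+2m_2=2(m-j)+1$ in front of $\lceil w(\gamma_j)\rceil$, and pass $\lceil w\rceil\leqslant w+1$. Your constant bookkeeping ($m^2-m_2^2$ from $\sum(2m+1-2j)$, $-m_0$ from the self-intersection ceilings, plus $\sum_j\binom{m_{2,j}}{2}\leqslant\binom{m_2}{2}$) also combines correctly to $\leqslant m^2-m$. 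All of this agrees with the paper.

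There is, however, a genuine gap in your treatment of the special--special exponential term, and it is precisely the point you yourself flag as ``the main obstacle.'' Summing $\bigl\lceil(\ell(\gamma')+\ell(\gamma''))/\ell(c_i)\bigr\rceil$ over pairs in a fixed $N_0(c_i)$ with $m_{2,i}$ special arcs of total length $L_i''$ yields (after removing the constant $+1$'s) exactly $(m_{2,i}-1)\,L_i''/\ell(c_i)$, \emph{not} something with prefactor $L_i''$. Applying $1/\ell(c_i)\leqslant\tfrac12 e^{L_i''/(2m_{2,i})}$ gives $\tfrac12(m_{2,i}-1)L_i''\,e^{L_i''/(2m_{2,i})}$. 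If you then, as you propose, crudely replace the exponent by $L_2''/4$ and sum over $i$, the linear part is $\tfrac12\sum_i(m_{2,i}-1)L_i''$, which can be much larger than $L_2''$ (take all $m_{2,i}=10$: you get $\tfrac92 L_2''$). The heuristic ``the linear prefactor matches $L_2''$'' therefore does not hold as stated, and the mention of the pair count $\binom{k}{2}$ is a distraction since the sum over pairs produces the factor $(m_{2,i}-1)$, not $\binom{m_{2,i}}{2}$.

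The missing ingredient is an elementary but genuinely needed inequality that trades the large prefactor against the smaller exponent: for integers $x\geqslant2$ and reals $a\geqslant 2x\log2$ one has
\begin{equation*}
(x-1)\,e^{a/(2x)}\;\leqslant\;2\,e^{a/4},
\end{equation*}
where the hypothesis $a\geqslant 2x\log 2$ is exactly the constraint $L_i''\geqslant 2m_{2,i}\log 2$ coming from $\ell(\gamma)\geqslant 2\log\tfrac{2}{\ell(c_i)}>2\log2$ for every special arc. With this, $\tfrac12(m_{2,i}-1)L_i''\,e^{L_i''/(2m_{2,i})}\leqslant L_i''\,e^{L_i''/4}$ per neighborhood, and then the sum $\sum_i L_i''\,e^{L_i''/4}\leqslant L_2''\,e^{L_2''/4}$ follows since each $L_i''\leqslant L_2''$ and $\sum_i L_i''=L_2''$. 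Without this lemma, or some equivalent mechanism exploiting the coupling between $m_{2,i}$ and $L_i''$, the $(m_{2,i}-1)$ factor cannot be absorbed and the claimed bound does not follow.
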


\begin{proof}

Suppose $C:=\{c_1,...,c_l\}\subseteq\calx$ is the set $\{c\in\calx:c\cap\Gamma_2''\neq\emptyset\}$. For $1\leqslant j\leqslant l$, define $C_j$ is the collection of arcs in $\Gamma_2''$ intersecting $c_j$, assume $m_{2,j}:=\#(C_j)$ and $\Gamma''_{2,j}:=\bigcup_{\gamma\in C_j}\gamma$, $L''_{2,j}:=\sum_{\gamma\in C_j}\ell(\gamma)$. Clearly $m_2=m_{2,1}+...+m_{2,l}$ and $L_2''=L_{2,1}''+...+L_{2,l}''$. Notice that for each $\gamma\in\Gamma_{2,j}''$, since $\gamma$ is of special case we have $\ell(\gamma)\geqslant2\log\frac2{\ell(c_j)}$, hence we have $\ell(c_j)\geqslant2\exp(-{\frac{L_{2,j}''}{2m_{2,j}}})$. By definition of $\calx$ we have $2\log\frac2{\ell(c_j)}>2\log2$, hence $2\log2\cdot{m_{2,j}}\leqslant L_{2,j}''$. As a result,

\begin{align*}
  \abs{\Gamma_2\cap\Gamma_2}&=\sum_{j=1}^{m}\abs{\gamma_j\cap\gamma_j}+\sum_{1\leqslant j_1<j_2\leqslant m}\abs{\gamma_{j_1}\cap\gamma_{j_2}}\\
&\leqslant\sum_{j=1}^{m_0}(\lceil{w(\gamma_j)}\rceil-1)+\sum_{1\leqslant j_1<j_2\leqslant m, j_1\leqslant m_0}2\lceil{w(\gamma_{j_1})}\rceil
+\sum_{j=1}^{l}\sum_{\{\gamma',\gamma''\}\subseteq C_j}1+\frac{\ell(\gamma')+\ell(\gamma'')}{\ell(c_j)}\\
&\leqslant\sum_{j=1}^{m_0}(2m+1-2j)(w(\gamma_j)+1)+\sum_{j=1}^{l}(m_{2,j}-1)\sum_{\gamma\in C_j}\frac{\ell(\gamma)}{\ell(c_j)}+\sum_{j=1}^{l}\frac{m_{2,j}(m_{2,j}-1)}2-m_0\\
&\leqslant\sum_{j=1}^{m_0}(2m+1-2j)w(\gamma_j)+m^2-m_2^2+\sum_{j=1}^{l}(m_{2,j}-1)\frac{L_{2,j}''}{\ell(c_j)}+m_2^2-m_2-m_0\\
&\leqslant\sum_{j=1}^{m_0}(2m+1-2j)w(\gamma_j)+\sum_{1\leqslant j\leqslant l, m_{2,j}\geqslant2}\frac{m_{2,j}-1}2L_{2,j}''e^{\frac{L_{2,j}''}{2m_{2,j}}}+m^2-m\\
&\leqslant\sum_{j=1}^{m_0}(2m+1-2j)w(\gamma_j)+\sum_{1\leqslant j\leqslant l, m_{2,j}\geqslant2}L_{2,j}''e^{\frac{L_{2,j}''}{4}}+m^2-m\\
&\leqslant\sum_{j=1}^{m_0}(2m+1-2j)w(\gamma_j)+{L_2''}e^{\frac{L_2''}4}+m^2-m
\end{align*}
The last but one inequality using the fact that when $x\geqslant2$, $a\geqslant 2x\log2$, then $(x-1)e^{\frac{a}{2x}}\leqslant2e^{\frac{a}4}$.
\end{proof}

\subsection{Upper bound estimate for $|\Gamma_2\cap\Gamma_2|$}

In order to estimate the upper bound of $\abs{\Gamma_2\cap\Gamma_2}$ we need to estimate (10). Define function $D:\mathbb{R}^m\rightarrow\mathbb{R}$:
$$D(x_1,...,x_{m_0}):=(2m-1)\sinh{x_1}+...+(2m_2+3)\sinh{x_{m_0-1}}+(2m_2+1)\sinh{x_{m_0}}$$

\begin{lemma}
Define $A\subseteq\mathbb{R}^m$:
$$A:=\set{(x_1,...,x_{m_0}): 0\leqslant x_1\leqslant...\leqslant x_m\qquad x_1+...+x_{m_0}=L_2'}$$
If function $D$ attains its maximum on $x'=(x_1',...,x_{m_0}')\in\mathbb{R}^m$, then there exists integer $1\leqslant m_1\leqslant m_0$, such that 
$$x'=\bracket{0,0,...,0,\frac{L_2'}{m_1},...,\frac{L_2'}{m_1}}$$
Here the number of 0 is $m_0-m_1$.
\end{lemma}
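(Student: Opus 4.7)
The plan is to recognize the statement as maximizing a convex function over a compact convex polytope and to invoke Bauer's maximum principle. First I would observe that $A\subset\mathbb{R}^{m_0}$ is cut out by the inequalities $x_1\geqslant 0$ and $x_{j+1}-x_j\geqslant 0$ for $j=1,\dots,m_0-1$ together with the affine equation $\sum_{j=1}^{m_0}x_j=L_2'$, so it is a compact convex polytope in $\mathbb{R}^{m_0}$. Continuity of $D$ then guarantees that the maximum is attained.

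Next I would verify that $D$ is strictly convex on the closed nonnegative orthant containing $A$. Every coefficient $2(m-j)+1\geqslant 2m_2+1\geqslant 1$ is positive, and $\sinh$ is strictly convex on $[0,\infty)$ since $\sinh''(t)=\sinh(t)>0$ for $t>0$. Therefore $D$ is a positive linear combination of one-variable strictly convex functions, one in each coordinate; two distinct points in the orthant differ in at least one coordinate, and strict convexity of $\sinh$ there gives the strict inequality needed to deduce strict convexity of $D$ as a function on $\mathbb{R}^{m_0}_{\geqslant 0}$.

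The crucial consequence is that a strictly convex function on a convex set attains its maximum only at extreme points: if $x=\lambda y+(1-\lambda)z$ with $y\neq z$ in $A$ and $\lambda\in(0,1)$, then
$$D(x)<\lambda D(y)+(1-\lambda)D(z)\leqslant\max\{D(y),D(z)\},$$
so such an $x$ cannot be a maximizer. Hence any maximizer $x'$ must be an extreme point of $A$.

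Finally I would enumerate the extreme points of $A$. Among the $m_0$ inequality constraints, exactly $m_0-1$ must be tight at an extreme point. If the slack inequality is the $k$th ordering constraint $x_k\leqslant x_{k+1}$ (with the convention that $k=0$ means $x_1\geqslant 0$ is slack), then the remaining tight inequalities force $x_1=\cdots=x_k=0$ and $x_{k+1}=\cdots=x_{m_0}$, and the sum constraint fixes the common value to be $L_2'/(m_0-k)$. Setting $m_1=m_0-k\in\{1,\dots,m_0\}$ recovers precisely the form in the statement. This extreme-point enumeration is the only step involving any concrete computation; the rest is standard convex analysis, and I do not anticipate a serious obstacle.
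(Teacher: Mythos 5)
Your proof is correct, and it takes a genuinely different route from the paper. The paper argues by contradiction: it assumes a maximizer $x'$ is not of the claimed form, so there is an index $s$ with $0<x_{s-1}'<x_s'$, and then constructs an explicit two-sided admissible perturbation $H(\delta)$ that fixes the sum. The convexity of $\sinh$ (via $H''(0)>0$) then contradicts the second-order necessary condition $H''(0)\leqslant0$ for an interior maximum of $H$ at $\delta=0$. Your approach instead packages this as a single abstract principle: $D$ is strictly convex on the nonnegative orthant (as a positive linear combination of the strictly convex functions $\sinh$, one in each separate coordinate), so any maximizer over the compact convex polytope $A$ must be an extreme point, and you finish by enumerating the vertices of $A$. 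What your approach buys is a cleaner argument that avoids the careful $\epsilon$-bookkeeping needed in the paper to keep the perturbed point inside $A$ in both directions, and it cleanly separates the convexity input from the combinatorics of the feasible set; the cost is that you have to justify that all $m_0$ vertices produced by dropping one constraint actually satisfy the dropped constraint (which they do, since the common value $L_2'/(m_0-k)$ is nonnegative), a point worth stating explicitly. Both proofs rest on the same essential observation — strict convexity of $\sinh$ — but yours is the more standard convex-analysis formulation.
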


\begin{proof}
 Since $A$ is compact, we know the maximum point $x'$ of function $D$ in $A$ exists. We prove the lemma by contradiction. Otherwise there exists $2\leqslant s\leqslant m_0$ such that $0<x_{s-1}'<x_s'$. Let $u,v\geqslant1$ be maximal integers satisfying $0<x_{s-u}'=...=x_{s-1}'<x_s'=...=x_{s+v-1}'$. Choose $\epsilon>0$ small that $v\epsilon<x_{s-u}'$, $2uv\epsilon<x_s'-x_{s-1}'$, and if $s-u>1$ then $v\epsilon<x_{s-u}'-x_{s-u-1}'$, if $s+v-1<m_0$ then $u\epsilon<x_{s+v}'-x_{s+v-1}'$. For $0\leqslant\delta\leqslant\epsilon$ define function
$$H(\delta):=\sum_{k=s-u}^{s-1}(2m+1-2k)\sinh(x_k'-v\delta)+\sum_{k=s}^{s+v}(2m+1-2k)\sinh(x_k'+u\delta)$$
Since $x'$ is the maximal point of $D$, we have $H'(0)=0$ and $H''(0)\leqslant0$, but the function $f(x)=\sinh{x}$ is a convex function, we have $H''(0)>0$, a contradiction. 

\begin{comment}
However
\begin{align*}
 H'(0)&=-v\sum_{k=s-u}^{s-1}(2m+1-2k)\cosh(x_k')+u\sum_{k=s}^{s+v}(2m+1-2k)\cosh(x_k')\\
&>-v\sum_{k=s-u}^{s-1}(2m+1-2k)\cosh(x_s')+u\sum_{k=s}^{s+v}(2m+1-2k)\cosh(x_s')\\
&\geqslant(-uv(2m+1-2s)+uv(2m+1-2s))\cosh(x_s')=0
\end{align*}
\end{comment}

\end{proof}

As a simple corollary we have

\begin{cor}\label{m_bigger2_1}
 If $\Gamma\cap\caln_t\neq\emptyset$ and $m_2=0$, i.e. $m\geqslant1$, then there exists $1\leqslant m_1\leqslant m_0$ that
\begin{align}\label{fff12}
\abs{\Gamma\cap\Gamma}<\frac12\bracket{\frac{25}{12}L_1+m}^2+m_1^2e^{\frac{L_2}{2m_1}}+m^2-m
\end{align}
If $m_2\geqslant1$ then
\begin{align}\label{fff12_1}
\abs{\Gamma\cap\Gamma}<\frac12\bracket{\frac{25}{12}L_1+m}^2+(m_1^2+2m_1m_2)e^{\frac{L_2'}{2m_1}}+{L_2''}e^{\frac{L_2''}4}+m^2-m
\end{align}
\end{cor}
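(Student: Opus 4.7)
The plan is to combine the thick-part bound from Theorem 2.5, the thin-part bound (\ref{ttt}), the winding-number estimate in Lemma 3.2 (part 1), and the extremal configuration identified in Lemma 4.1. Since $\abs{\Gamma\cap\Gamma}=\abs{\Gamma_1\cap\Gamma_1}+\abs{\Gamma_2\cap\Gamma_2}$, Theorem 2.5 immediately contributes the term $\frac12\bracket{\frac{25}{12}L_1+m}^2$, and the $m^2-m$ tail is taken directly from (\ref{ttt}); everything else must come from bounding the weighted sum $\sum_{j=1}^{m_0}(2m+1-2j)w(\gamma_j)$ and the extra term $L_2''e^{L_2''/4}$.

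First I would plug in Lemma 3.2(1), which gives $w(\gamma_j)\leqslant 2\sinh(\ell(\gamma_j)/2)$. After setting $x_j:=\ell(\gamma_j)/2$, the weighted sum in (\ref{ttt}) is bounded by $2D(x_1,\dots,x_{m_0})$ where $D$ is exactly the functional studied in Lemma 4.1 and the $x_j$ satisfy $0\leqslant x_1\leqslant\cdots\leqslant x_{m_0}$ with $\sum_j x_j=L_2'/2$. Then I would invoke Lemma 4.1 (applied with total $L_2'/2$ in place of $L_2'$, which only rescales the conclusion) to conclude that there is an integer $1\leqslant m_1\leqslant m_0$ such that the maximum of $D$ subject to these constraints is achieved at the point with $m_0-m_1$ zero entries and $m_1$ entries equal to $L_2'/(2m_1)$.

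Next I would evaluate the maximum. The nonzero contributions come from indices $k=m_0-m_1+1,\dots,m_0$, so the maximum equals $\sinh\!\bracket{\tfrac{L_2'}{2m_1}}\cdot\sum_{k=m_0-m_1+1}^{m_0}(2m+1-2k)$, and the arithmetic sum telescopes to $m_1(2m_2+m_1)=m_1^2+2m_1m_2$. Using the elementary bound $2\sinh x<e^x$, the weighted sum $\sum(2m+1-2j)w(\gamma_j)$ is then strictly less than $(m_1^2+2m_1m_2)\,e^{L_2'/(2m_1)}$. Substituting back into (\ref{ttt}) and Theorem 2.5 yields (\ref{fff12_1}).

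For the case $m_2=0$, one has $L_2'=L_2$, $L_2''=0$, and $2m_1m_2=0$, so the same computation collapses to $m_1^2 e^{L_2/(2m_1)}$ and the $L_2''e^{L_2''/4}$ term vanishes, giving (\ref{fff12}). The main obstacle is mostly bookkeeping: making sure the index shifts match between $D$ in Lemma 4.1 and the coefficients $(2m+1-2j)$ in (\ref{ttt}), and verifying that the arithmetic sum produces exactly $m_1^2+2m_1m_2$; the analytic content has already been carried out in Lemmas 3.2 and 4.1.
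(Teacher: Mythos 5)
Your proposal matches the paper's own proof: both combine the thick-part estimate (Theorem 2.7 here, which you call Theorem 2.5 — a label slip, not a mathematical one) with Theorem 3.4, bound $w(\gamma_j)$ by $2\sinh(\ell(\gamma_j)/2)$ via Lemma \ref{lemma_32}, maximize the resulting functional $D$ via Lemma 4.1, evaluate the arithmetic sum to $m_1^2+2m_1m_2$, and finish with $2\sinh x<e^x$. Your remark that Lemma 4.1 must be invoked with total $L_2'/2$ rather than $L_2'$ is a detail the paper glosses over but which you handle correctly.
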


\begin{proof}
 When $m_2=0$, combining with Theorem 2.7 and Theorem 3.4 there exists such $m_1$ that
\begin{align*}
\abs{\Gamma\cap\Gamma}&\leqslant\frac12\bracket{\frac{25}{12}L_1+m}^2+2D\bracket{\frac{\ell(\gamma_1)}2,...,\frac{\ell(\gamma_m)}2}+m^2-m\\
&\leqslant\frac12\bracket{\frac{25}{12}L_1+m}^2+2m_1^2\sinh\frac{L_2}{2m_1}+m^2-m<\frac12\bracket{\frac{25}{12}L_1+m}^2+m_1^2e^{\frac{L_2}{2m_1}}+m^2-m
\end{align*}
When $m_2\geqslant1$ the proof is same.
\end{proof}

\section{Total intersection number estimate}\label{section5}

In this section, we complete the proof of Theorem \ref{thm:main} by examining two distinct cases. For $m \geq 2$, the geodesic cannot penetrate deeply into the thin part, which ensures a controlled self-intersection pattern. When $m = 1$, by minimizing the length in the thick region, we allow the geodesic to extend further into the thin part, thereby increasing its self-intersections. In this configuration, the geodesic necessarily adopts a corescrew structure, whose well-defined geometry enables precise determination of the intersection count through systematic examination. This comprehensive case analysis establishes the desired result.

\subsection{Case $m\geqslant2$}

In this subsection we consider the case when $m\geqslant2$:

\begin{thm}\label{m_bigger2} 
 When $k>1750$ and $m\geqslant2$, if $\abs{\Gamma\cap\Gamma}=k$, then $L\geqslant2\cosh^{-1}(2k+1)$. 
\end{thm}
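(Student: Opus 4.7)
The plan is simply to observe that Theorem \ref{m_bigger2} is an immediate corollary of Theorems \ref{m_bigger2_01}, \ref{m_bigger2_02}, and \ref{m_bigger2_03}, which together exhaust all possibilities for the pair $(m_0, m_2)$. Recall the definitions from Section 3: $m_0$ counts the arcs of $\Gamma_2$ of general case and $m_2$ counts the arcs of special case, with $m = m_0 + m_2$. Thus under the hypothesis $m \geqslant 2$, exactly one of the following three alternatives holds: (i) $m_2 = 0$, so $m_0 = m \geqslant 2$; (ii) $m_0 = 0$, so $m_2 = m \geqslant 2$; or (iii) $m_0 \geqslant 1$ and $m_2 \geqslant 1$.

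In alternative (i), Theorem \ref{m_bigger2_01} directly yields $L \geqslant 2\cosh^{-1}(2k+1)$. In alternative (ii), Theorem \ref{m_bigger2_02} yields the same conclusion. In alternative (iii), Theorem \ref{m_bigger2_03} yields the same conclusion. Combining these three cases gives the desired inequality under the single hypothesis $m \geqslant 2$, completing the proof.

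Since each of the three constituent theorems has already been established through the delicate convexity analysis of the function $I$ and its specialization according to the structure of the thin part, no new obstacle arises here: the only task is to observe that the disjunction $\set{m_2 = 0} \cup \set{m_0 = 0} \cup \set{m_0 \geqslant 1 \text{ and } m_2 \geqslant 1}$ covers all the cases consistent with $m \geqslant 2$. Consequently the proof of Theorem \ref{m_bigger2} can be written in just a few lines as a case split, with each branch referring back to the appropriate earlier theorem.
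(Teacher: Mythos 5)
Your proof is correct and matches the paper exactly: the paper presents Theorem \ref{m_bigger2} with the remark ``As a corollary of above 3 theorems we summarize,'' relying on precisely the observation you make, namely that the three cases $m_2=0$, $m_0=0$, and $m_0,m_2\geqslant1$ exhaust all possibilities for $(m_0,m_2)$ with $m_0+m_2=m\geqslant2$. Nothing more is needed, and you supply nothing different.
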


The proof is organized into three main components, each formulated as a separate theorem: Theorems~\ref{m_bigger2_01}, \ref{m_bigger2_02}, and~\ref{m_bigger2_03} below.

\begin{thm}\label{m_bigger2_01} 
 When $k>1750$ and $m\geqslant2$, if $m_2=0$ and $\abs{\Gamma\cap\Gamma}=k$, then $L\geqslant2\cosh^{-1}(2k+1)$. 
\end{thm}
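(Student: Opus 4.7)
The plan is to use the upper bound of Corollary \ref{m_bigger2_1},
\[
k < \tfrac{1}{2}\bigl(\tfrac{25}{12}L_1+m\bigr)^2 + m_1^2 e^{L_2/(2m_1)} + m^2 - m,
\]
and show its right-hand side is at most $\sinh^2(L/4)$, which is equivalent to the desired inequality $L\geq 2\cosh^{-1}(2k+1)$ via the identity $\cosh(L/2)=2\sinh^2(L/4)+1$.

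First I would establish a geometric lower bound $L_1\geq 2m\log 2$. Each thick arc $\delta_j$ of $\Gamma$ has both endpoints on some $\partial N_0(c_i)$, and at each endpoint must traverse the annular shell $N_3(c_i)\setminus N_0(c_i)$ of width $\log 2$, so $\ell(\delta_j)\geq 2\log 2$; summing over the $m$ thick arcs yields $L_1\geq 2m\log 2$, and the hypothesis $m\geq 2$ gives $L_1\geq 4\log 2$. Next I would regard the right-hand side of Corollary \ref{m_bigger2_1} as a function of $L_1\in[2m\log 2,L]$ with $L_2=L-L_1$; a direct computation of the second derivative yields $(25/12)^2 + e^{L_2/(2m_1)}/4>0$, so the function is convex in $L_1$ and its maximum on this interval is attained at an endpoint. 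At $L_1=L$ the bound is polynomial in $L$, and since $k>1750$ forces $L$ to be roughly $17.7$ or larger, it is dominated by $\sinh^2(L/4)\sim e^{L/2}/4$. Hence the critical endpoint is $L_1=2m\log 2$.

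Substituting $L_1=2m\log 2$ reduces the task to maximizing $m_1^2 e^{(L-2m\log 2)/(2m_1)}$ over $m_1\in\{1,\ldots,m\}$, using also Proposition 2.6 to bound $m$. For $m\geq 3$ the worst case $m_1=1$ contributes at most $e^{L/2}/4^m\leq e^{L/2}/64$, an order of magnitude below $\sinh^2(L/4)\sim e^{L/2}/4$, so these cases are handled routinely by polynomial domination. The decisive case is $m=2$, $m_1=1$, where the exponential term equals $e^{(L-4\log 2)/2}=e^{L/2}/4$, exactly matching the leading term of $\sinh^2(L/4)=e^{L/2}/4 + e^{-L/2}/4 - 1/2$. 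The complementary case $m=2$, $m_1=2$ produces $4e^{L/4}$, which is far below $\sinh^2(L/4)$ for large $L$ and is closed by a direct comparison.

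The main obstacle is the $m=2$, $m_1=1$ case where the dominant exponentials cancel, leaving a bounded additive comparison. Closing it requires bookkeeping of every lower-order constant in the polynomial piece $\tfrac{1}{2}(25\log 2/3+2)^2+2$ together with the strict inequality in Corollary \ref{m_bigger2_1} and the integrality of $k$; the numerical threshold $k>1750$ is precisely tuned to make this comparison close. A likely refinement needed here is an improvement on the geometric lower bound $L_1\geq 4\log 2$: in the $m_1=1$ regime the $D$-maximum is concentrated at one arc, which forces the other arc to be shallow, and a shallow arc's shell pieces have length closer to $\log(2+\sqrt 3)$ than to $\log 2$, supplying the additional $L_1$-length that compensates for the $e^{L_2/2}$ term.
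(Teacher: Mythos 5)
Your overall strategy matches the paper's: start from Corollary~\ref{m_bigger2_1}, exploit convexity of the bound in $m_1$ and in $L_1$ (equivalently $L_2$), reduce to the endpoints, and compare against $\sinh^2(L/4)$. You correctly obtain $L_1\geqslant 2m\log 2$ and correctly identify $m=2$, $m_1=1$ as the critical case where the naive exponential $e^{L_2/2}=e^{L/2}/4$ exactly matches the leading term of $\sinh^2(L/4)$, so the argument cannot close on constants alone. A small arithmetic slip along the way: for $m_1=1$ and $L_2=L-2m\log 2$, the exponential term is $e^{L_2/2}=e^{L/2}/2^m$, not $e^{L/2}/4^m$; for $m=3$ this is $e^{L/2}/8$ rather than $e^{L/2}/64$, so the margin is considerably tighter than you state, though that sub-case still closes.

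The genuine gap is in the critical case. Your proposed fix --- improving $L_1\geqslant 4\log 2$ by observing that a ``shallow'' arc has shell pieces close to $\log(2+\sqrt 3)$ --- is not fully developed and, as stated, does not quantitatively close the comparison. The difficulty is that Corollary~\ref{m_bigger2_1} is already the output of the $D$-maximization; the parameter $m_1=1$ records where the abstract extremal problem is maximized, not a geometric property of the actual arcs $\gamma_1,\gamma_2$. There is no a priori reason for one of the $\ell(\gamma_p)$ to be small, so you cannot assume the shallow-arc estimate applies. The paper instead splits $m=2$ by whether $L_2\leqslant L-4\log 2-0.48$. The margin $0.48$ arises from the segment length used in the thick-part estimate and yields the extra factor $e^{-0.24}<1$ that closes the first sub-case. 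In the remaining sub-case $L_1<4\log 2+0.48$, the paper abandons the $I$-functional entirely and returns to the raw winding-number estimate for the thin part: by Lemma~\ref{lemma_32} a general-case arc satisfies $w(\gamma_p)\leqslant w(\gamma_p')\leqslant\sinh(\ell(\gamma_p')/2)$ --- note the coefficient $1$ here rather than the coefficient $2$ in $w(\gamma_p)\leqslant 2\sinh(\ell(\gamma_p)/2)$ --- together with $\ell(\gamma_p')\geqslant 2\log(2+\sqrt 3)$, and if one arc were special case with $\ell(\gamma_p')<2\log(2+\sqrt 3)$ then $\ell(c_{i'})>1$ contradicts the definition of $\calx$. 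It is this sharper bound on the winding number of the full arc $\gamma_p'$ (measured at $\partial N_3$ rather than at $\partial N_0$, giving the missing factor of $2$) that carries the critical sub-case, not an improvement of the $L_1$ lower bound. Your proof would need to incorporate this finer thin-part estimate to be complete.
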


\begin{proof}
 Consider the function
$$I(m,m_1,L_2)=\frac12\bracket{\frac{25}{12}(L-L_2)+m}^2+m_1^2e^{\frac{L_2}{2m_1}}+m^2-m$$
where $1\leqslant m_1\leqslant m<\frac{L}{2\log2}$, $0\leqslant L_2\leqslant L-2m\log2$, $L_1+L_2=L$.
We have
\begin{align*}
\frac{\partial^2I}{\partial m_1^2}&=\frac{\partial}{\partial m_1}\bracket{\bracket{2m_1-\frac{L_2}2}e^{\frac{L_2}{2m_1}}}=\bracket{2-\frac{L_2}{2m_1^2}\bracket{2m_1-\frac{L_2}2}}e^{\frac{L_2}{2m_1}}\\
&=\bracket{1+\bracket{\frac{L_2}{2m_1}-1}^2}e^{\frac{L_2}{2m_1}}\geqslant0
\end{align*}
So $I(m,m_1,L_2)$ is a convex function of variable $m_1$. Hence
$$I(m,m_1,L_2)\leqslant\max\{I(m,m,L_2),I(m,1,L_2)\}$$
\begin{enumerate}
\item
When $m_1=1$, we have
\begin{align*}
 \frac{\partial^2I(m,1,L_2)}{\partial L_2^2}=\frac12\frac{\partial^2}{\partial L_2^2}\bracket{\frac{25}{12}(L-L_2)+m}^2+\frac{\partial^2}{\partial L_2^2}e^{\frac{L_2}{2}}=\frac{625}{144}+\frac14e^{\frac{L_2}2}>0
\end{align*}
Using the fact $L_2\leqslant L-2m\log2$, if $m\geqslant3$ we have
\begin{align*}
 I(m,1,L_2)\leqslant \max&\{I(m,1,0),I(m,1,L-2m\log2)\}\\
\leqslant\max&\left\{\frac12\bracket{\frac{25}{12}+\frac1{2\log2}}^2L^2+m^2-m+1,\frac12\bracket{\frac{6+25\log2}6m}^2+\frac1{2^m}e^{\frac{L}2}+m^2-m\right\}\\
\leqslant \max&\left\{\frac12\bracket{\frac{25}{12}+\frac1{2\log2}}^2L^2+(\frac{L}{2\log2})^2,\frac12\bracket{\frac{6+25\log2}2}^2+\frac1{8}e^{\frac{L}2}+6\right\}\\
\leqslant\max&\left\{75+\frac18e^{\frac{L}2}, 4.53L^2\right\}
\end{align*}
The third inequality using the fact that both two functions in "max" are convex on $m\in[3,\frac{L}{2\log2}]$. If $L<2\cosh^{-1}(2k+1)$ then $e^{\frac{L}2}<4k+2$, but when $L\geqslant17.2$ we have $e^{\frac{L}2}>18.12L^2+2$, $302+\frac12e^{\frac{L}2}<e^{\frac{L}2}$, contradiction with $4\abs{\Gamma\cap\Gamma}+2>e^{\frac{L}2}$. Then $L<17.2$, hence 
$$\max\left\{4.46L^2, 75+\frac18e^{\frac{L}2}\right\}<1600$$
we get a contradiction.

If $m=2$ and $L_2\leqslant L-2m\log2-0.48$, then 
\begin{align*}
 I(m,1,L_2)\leqslant \max&\{I(m,1,0),I(m,1,L-4\log2-0.48)\}\\
\leqslant\max&\left\{\frac12\bracket{\frac{25}{12}+\frac1{2\log2}}^2L^2+m^2-m+1,\frac1{2^me^{0.24}}e^{\frac{L}2}+41\right\}\\
\leqslant \max&\left\{4.46L^2+3,\frac{0.79}{4}e^{\frac{L}2}+41\right\}
\end{align*}
Since $4\abs{\Gamma\cap\Gamma}+2\leqslant\max\{17.84L^2+14, 166+{0.79}e^{\frac{L}2}\}<e^{\frac{L}2}$ for $L>17.7$ we get a contradiction. When $L\leqslant17.7$, $\max\left\{4.46L^2+3, 41+\frac{0.79}{4}e^{\frac{L}2}\right\}<1750$, contradiction.

\begin{comment}
suppose $\gamma_1$ and $\gamma_2$ are two arcs of $\Gamma\cap\bigcup_{c\in\calx\cup\caly}N_0(c_i)$, and $\gamma_1'$ and $\gamma_2'$ are corresponding arcs of $\Gamma\cap\bigcup_{c_i\in\calx\cup\caly}N_3(c_i)$. And $\Gamma\setminus(\gamma_1'\cup\gamma_2')$ consists of two geodesic segments $\delta_1'$, $\delta_2'$ and assume $\ell(\delta_1')\leqslant\ell(\delta_2')$. Then $\ell(\delta_1')+\ell(\delta_2')<0.48$. If $\delta_1'\cap\delta_2'\neq\emptyset$, suppose $y\in\delta_1'\cap\delta_2'$, since the injective radius of $y$ is larger than 0.48, the ball $B_y(0.48)$ centered at $y$ of radius 0.48 is an embedded ball, $\delta_1'\cup\delta_2'\subseteq B_y(0.48)$, so $\delta_1'\cap\delta_2'$ contains at most one point.
\end{comment}

If $m=2$ and $L_2>L-4\log2-0.48$, assume $L_1':=\ell(\gamma_1'), L_2':=\ell(\gamma_2'), L':=L_1'+L_2', L_1'\leqslant L_2'$. Since there exists $c_{i'},c_i\in\calx\cup\caly$, such that $\gamma_1'\cap N_0(c_{i'})\neq\emptyset$, $\gamma_2'\cap N_0(c_{i})\neq\emptyset$. if $L_1',L_2'\geqslant2\log(2+\sqrt3)$, 
using Lemma \ref{lemma_32} and \cite[Lemma 2.5]{Y2805} we have
\begin{align*}
 \abs{\Gamma\cap\Gamma}&\leqslant\frac12\bracket{\frac{4\log2+0.48}{0.48}+2}^2+3\sinh{\frac{L_1'}2}+\sinh\frac{L_2'}2+2\\
&\leqslant\max\left\{41+4\sinh{\frac{L}4}, 40.6+3\sinh(\log(2+\sqrt3))+\sinh\bracket{\frac{L}2-\log(2+\sqrt3)}\right\}\\
&\leqslant\max\left\{41+2e^{\frac{L}4}, 46+\frac17e^{\frac{L}2}\right\}
\end{align*}
Since $166+8e^{\frac{L}4}<e^{\frac{L}2}$ and $186+\frac47e^{\frac{L}2}<e^{\frac{L}2}$ holds for $L>17$, contradicting $4\abs{\Gamma\cap\Gamma}+2>e^{\frac{L}2}$. When $L\leqslant17$, $\max\left\{41+2e^{\frac{L}4}, 46+\frac17e^{\frac{L}2}\right\}<1600$, contradiction.

If $L_1'<2\log(2+\sqrt3)$, using Lemma \ref{lemma_32}, $\gamma_1'$ cannot be general case, it must be special case. Hence $2\log\frac4{\ell(c_{i'})}\leqslant2\log(2+\sqrt3)<4\log2$, we have $\ell(c_{i'})>1$, contradiction to the definition of $\calx$ and $\caly$.

\begin{comment}
 If $i=i'$, $w(\gamma_1')\leqslant\frac{\ell(\gamma_1')}{\ell(c_i')}\leqslant\ell(\gamma_1')$. Hence we have
$$\abs{\Gamma\cap\Gamma}\leqslant5+3(w(\gamma_1')+1)+w(\gamma_2')+1\leqslant3L+9$$
When $L>17$ we have $4(3L+9)+2<e^{\frac{L}2}$, when $L\leqslant17$ we have $3L+9<1750$, contradiction.
If $i\neq i'$, $\gamma_1\cap\gamma_1=\emptyset$. For $1\leqslant x\leqslant2$ we have
$$\sinh^{-1}\bracket{\frac1{\sinh\frac{x}2}}-\log\frac4{x}>0.02$$
and $N(c_{i'})\cap N_3(c_i)=\emptyset$, then $L_2<L-4\log2-0.04$, then
\begin{align*}
\abs{\Gamma\cap\Gamma}&\leqslant\frac12\bracket{\frac{25}{12}(L-L_2)+2}^2+2\sinh\frac{L_2}2\\
&\leqslant\max\left\{\frac12\bracket{\frac{25}{12}L+2}^2, \frac12\bracket{\frac{25}{12}(4\log2+0.04)+2}^2+e^{\frac{L}2-2\log2-0.04}\right\}\\
&\leqslant\max\left\{3L^2+5L+2, \frac{0.961}4e^{\frac{L}2}+31\right\}
\end{align*}
When $L\leqslant17.6$, then $k=\abs{\Gamma\cap\Gamma}<1750$, contradiction. When $L>17.6$ then $4k+2\leqslant\max\{16L^2, 0.961e^{\frac{L}2}+126\}<e^{\frac{L}2}$, contradiction.
\end{comment}

\item

When $m_1=m$, we have
\begin{align*}
\frac{\partial^2I(m,m,L_2)}{\partial m^2}&=2+\frac{\partial^2}{\partial m^2}m^2e^{\frac{L_2}{2m}}=2+\bracket{1+\bracket{\frac{L_2}{2m}-1}^2}e^{\frac{L_2}{2m}}>0\\
\frac{\partial^2I(m,m,L_2)}{\partial L_2^2}&=\frac12\frac{\partial^2}{\partial L_2^2}\bracket{\frac{25}{12}(L-L_2)+m}^2+\frac{\partial^2}{\partial L_2^2}m^2e^{\frac{L_2}{2m}}=\frac{625}{144}+\frac14e^{\frac{L_2}{2m}}>0
\end{align*}
hence
\begin{align*}
 I(m,m,L_2)\leqslant&\max\left\{I(2,2,L_2),I(\frac{L}{2\log2},\frac{L}{2\log2},L_2)\right\}\\
\leqslant&\max\{I(2,2,0),I(2,2,L-4\log2),I(\frac{L}{2\log2},\frac{L}{2\log2},0),I(\frac{L}{2\log2},\\
&\frac{L}{2\log2},L-4\log2)\}\\
\leqslant&\max\{\frac12\bracket{\frac{25}{12}+\frac1{2\log2}}^2L^2+(\frac{L}{2\log2})^2+4e^{\frac{L}4}, \frac12\bracket{\frac{25}{12}+\frac1{2\log2}}^2L^2+3(\frac{L}{2\log2})^2\}\\
\leqslant&\max\{4.454L^2+4e^{\frac{L}4}, 5.5L^2\}
\end{align*}
When $L>17.7$, we have $4(4.454L^2+4e^{\frac{L}4})+2<e^{\frac{L}2}$, $22L^2+2<e^{\frac{L}2}$, a contradiction. When $L\leqslant17.7$, we have $4.454L^2+4e^{\frac{L}4}<1750$ and $5.5L^2<1750$, contradiction.

\end{enumerate}

\end{proof}
Note that when $m=0$, i.e. $\Gamma\cap\caln_t=\emptyset$, then Lemma \ref{m_bigger2_1} holds, hence we have
$$k=\abs{\Gamma\cap\Gamma}\leqslant\frac12\bracket{\frac{25}{12}L+1}^2$$
Hence when $k>1700$, then $L>17$, hence $4k+2\leqslant2+2\bracket{\frac{25}{12}L+1}^2<e^{\frac{L}2}$, contradiction.

\begin{thm}\label{m_bigger2_02} 
 When $k>1750$ and $m\geqslant2$, if $m_0=0$ and $\abs{\Gamma\cap\Gamma}=k$, then $L\geqslant2\cosh^{-1}(2k+1)$. 
\end{thm}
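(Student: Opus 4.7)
The plan is to mirror the proof of Theorem~\ref{m_bigger2_01}, but this time specialising the bound (\ref{fff12_1}) of Corollary~\ref{m_bigger2_1} to the ``all special case'' regime $m_0=0$. When $m_0=0$ one has $L_2'=0$, $L_2''=L_2$ and $m_2=m$, so the general-case winding-number contribution $(m_1^{2}+2m_1m_2)e^{L_2'/(2m_1)}$ disappears and what remains is the two-variable bound
\[
k \;=\;\abs{\Gamma\cap\Gamma}\;<\;I(m,L_2)\;:=\;\tfrac12\bigl(\tfrac{25}{12}(L-L_2)+m\bigr)^{2}+L_2\,e^{L_2/4}+m^{2}-m.
\]
Since each special-case arc crosses a collar $N_0(c_i)$ with $\ell(c_i)<1$, every $\gamma_j$ has length at least $2\log 2$, so $L_2\ge 2m\log 2$; combined with Proposition~2.6, which gives $m\le L/(2\log 2)$, this traps $(m,L_2)$ in the triangle $T$ with vertices $V_1=(2,4\log 2)$, $V_2=(2,L)$, and $V_3=\bigl(L/(2\log 2),L\bigr)$.

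Next I would reduce the problem to a finite vertex check by convexity. A direct computation gives
\[
\frac{\partial^{2}I}{\partial L_2^{2}}\;=\;\tfrac{625}{144}+\bigl(\tfrac12+\tfrac{L_2}{16}\bigr)e^{L_2/4}\;>\;0,
\]
so $I(m,\cdot)$ is convex on $[2m\log 2,L]$ and its maximum is attained at one of the two endpoints. The resulting upper envelope is a pointwise maximum of two explicitly convex functions of $m$, hence still convex on $[2,L/(2\log 2)]$, and its maximum lies at $m=2$ or $m=L/(2\log 2)$. This narrows the search to $V_1$, $V_2$, and $V_3$.

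For each vertex I would then run the two-regime dichotomy used in Theorem~\ref{m_bigger2_01}: assume for contradiction that $L<2\cosh^{-1}(2k+1)$, so that $e^{L/2}<4k+2$, and compare $4I(V)+2$ with $e^{L/2}$. At $V_1$ the exponential factor collapses to $e^{\log 2}=2$ and $I(V_1)$ is quadratic in $L$; past some explicit threshold $e^{L/2}$ dominates, while below it one checks numerically that $I(V_1)<1750<k$. At $V_2$ one has $I(V_2)=4+Le^{L/4}$, and the required inequality $18+4Le^{L/4}\le e^{L/2}$ reduces to $4L+18e^{-L/4}\le e^{L/4}$, which holds for $L\gtrsim 17$. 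At $V_3$ one merely adds polynomial-in-$L$ contributions from $\tfrac12 m^{2}$ and $m^{2}-m$, and the same comparison closes.

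The main obstacle I expect is numerical tightness at $V_3$, where $m=L/(2\log 2)$ pushes the $m^{2}$ terms up to the same order as $Le^{L/4}$ in the crossover range around $L\approx 17$--$18$. But those terms remain polynomial in $L$, while $e^{L/2}$ is exponential, so the gap eventually opens; the threshold $k>1750$ has been calibrated precisely so that the corresponding $L$ lies past the crossover. Collecting the three vertex-wise contradictions will prove the theorem.
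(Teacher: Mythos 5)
Your proposal is essentially the paper's proof restated with slightly different bookkeeping: the paper also starts from the same two-term bound and also uses convexity to push to endpoints before a numerical check. A few minor differences are worth flagging. First, the citation: Corollary~\ref{m_bigger2_1}'s second estimate (\ref{fff12_1}) presupposes $m_0\geqslant1$ (so that $m_1$ is defined); when $m_0=0$ one should invoke Theorem~3.4 directly, which then has an empty winding-number sum — that is the real reason the general-case term is absent, not "$L_2'=0$ kills it", since $e^{L_2'/(2m_1)}\to 1$ rather than $0$. The resulting bound $k<\tfrac12\bigl(\tfrac{25}{12}L_1+m\bigr)^2+L_2 e^{L_2/4}+m^2-m$ is nonetheless correct. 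Second, the paper substitutes $L_1\leqslant L$ and $L_2''\leqslant L-2m\log 2$ and then optimizes the single variable $m$, whereas you keep $L_1=L-L_2$ exact but relax the true constraint $L_2\leqslant L-2m\log 2$ to $L_2\leqslant L$, getting a triangle whose corner $V_3=(L/(2\log 2),L)$ lies well outside the feasible region. This makes the $V_3$ estimate noticeably looser: at $L\approx 17.7$ one gets $I(V_3)\approx 1710$ against the paper's $\approx 1400$, so the margin against $1750$ (and against $e^{L/2}$ just above the threshold) is thin, though it does close — as you anticipate. Third, your two-stage convexity argument (convex in $L_2$ for each $m$, then convexity of each endpoint branch in $m$) is a valid way to reduce to the three vertices; it does not require joint convexity. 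In short: same strategy, correct, just marginally tighter numerics than the paper's version because of the relaxed $L_2$-range.
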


\begin{proof}
When $m_0=0$ then $m=m_2$ and $m_2\geqslant2$, we have
\begin{align*}
k=\abs{\Gamma\cap\Gamma}&<\frac12\bracket{\frac{25}{12}L_1+m}^2+{L_2''}e^{\frac{L_2''}4}+m^2-m\\
&\leqslant\frac12\bracket{\frac{25}{12}L+m}^2+({L-2m\log2})e^{\frac{L-2m\log2}4}+m^2-m\\
&\leqslant\max\left\{\frac12\bracket{\frac{25}{12}L+2}^2+(L-4\log2)e^{\frac{L-4\log2}4}+2, \frac12\bracket{\frac{25}{12}+\frac1{2\log2}}^2L^2+\bracket{\frac{L}{2\log2}}^2\right\}\\
&\leqslant\max\left\{\frac12\bracket{\frac{25}{12}L+2}^2+\frac{L}2e^{\frac{L}4}+2,4.46L^2\right\}
\end{align*}
The third inequality using the fact that the function before the inequality sign is convex on $m\in(2,\frac{L}{2\log2})$. 
When $L>17.7$, we have $4k+2<e^{\frac{L}2}$. When $L\leqslant17.7$ we have $k<1750$, contradiction.

\end{proof}

\begin{thm}\label{m_bigger2_03} 
 When $k>1750$ and $m\geqslant2$, if $m_0,m_2\geqslant1$ and $\abs{\Gamma\cap\Gamma}=k$, then $L\geqslant2\cosh^{-1}(2k+1)$. 
\end{thm}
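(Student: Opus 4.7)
The plan is to mirror the strategy of Theorems \ref{m_bigger2_01} and \ref{m_bigger2_02}, now applied to inequality \ref{fff12_1} of Corollary \ref{m_bigger2_1}. Set
$$I(m,m_1,L_2',L_2''):=\frac12\bracket{\frac{25}{12}(L-L_2'-L_2'')+m}^2+(m_1^2+2m_1m_2)e^{\frac{L_2'}{2m_1}}+L_2''e^{\frac{L_2''}4}+m^2-m,$$
so that $k<I$ under the constraints $m=m_0+m_2$ with $m_0,m_2\geqslant 1$, $1\leqslant m_1\leqslant m_0$, $L_1+L_2'+L_2''=L$, $L_2''\geqslant 2m_2\log 2$ (since every special-case arc has length at least $2\log 2$), and $m\leqslant L/(2\log 2)$.

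First I would verify convexity of $I$ in $m_1$ exactly as in the proof of Theorem \ref{m_bigger2_01}: a direct second-derivative computation gives
$$\frac{\partial^2 I}{\partial m_1^2}=\bracket{1+\bracket{\frac{L_2'}{2m_1}-1}^2}e^{\frac{L_2'}{2m_1}}+2m_2\bracket{\frac{L_2'^2}{4m_1^3}}e^{\frac{L_2'}{2m_1}}\geqslant 0,$$
so $I$ is maximized at $m_1=1$ or $m_1=m_0$. The same quick check shows $\partial^2 I/\partial L_2'^2>0$ and $\partial^2 I/\partial L_2''^2>0$, so in each of these length variables $I$ is maximized on the boundary of its admissible range.

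Next I would reduce to the corner cases $(L_2',L_2'')\in\{0,\,L-2m_2\log 2\}\times\{2m_2\log 2,\,L-L_2'\}$ and consider both choices $m_1\in\{1,m_0\}$. In every resulting case $I$ becomes a polynomial in $L$ of degree at most two, plus at most two exponential terms of the form $e^{L/4}$ or $e^{L/2}$, with coefficients depending only on $m\leqslant L/(2\log 2)$. As in the previous two theorems one checks numerically that either $4k+2<e^{L/2}$ already contradicts $L<2\cosh^{-1}(2k+1)$ once $L\gtrsim 17.7$, or the resulting explicit bound on $I$ stays below $1750$ for $L$ below that threshold. The most delicate subcase is $m_0=m_2=1$ with $L_2$ close to $L$; there I would mimic the end of the proof of Theorem \ref{m_bigger2_01}, invoking Lemma \ref{lemma_32}(2) to guarantee $\ell(\gamma_p')\geqslant 2\log(2+\sqrt{3})$ for the general-case arc and replacing the crude $e^{L_2'/2}$ by a $\sinh$-estimate that controls the intersections using the length of the $N_3$-arc.

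The main obstacle is the cross term $2m_1 m_2\,e^{L_2'/(2m_1)}$, which did not appear in either Theorem \ref{m_bigger2_01} or Theorem \ref{m_bigger2_02}: its coefficient grows linearly in $m_2$, so the reduction to $m_1\in\{1,m_0\}$ must be combined with a careful split between the $m_1=m_0$ branch (where $m_0$ can be comparable with $L$ and the two exponentials compete) and the $L_2'=0$ branch (where the cross term collapses and the estimate reduces to the one already obtained in Theorem \ref{m_bigger2_02}). Once both extremes are dispatched, the convexity steps above interpolate all intermediate configurations, and collecting the worst constants yields the threshold $L\geqslant 2\cosh^{-1}(2k+1)$ for $k>1750$.
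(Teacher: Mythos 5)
Your plan follows the same skeleton as the paper's proof: reduce to the function $I$, verify convexity in $m_1$ (your second-derivative computation
$\partial^2_{m_1}I=\bigl(1+(\tfrac{L_2'}{2m_1}-1)^2+\tfrac{m_2L_2'^2}{2m_1^3}\bigr)e^{L_2'/2m_1}\geqslant 0$
is correct and is the same observation the paper makes via convexity of $xe^{a/x}$ and $x^2e^{a/x}$), land in the two branches $m_1=1$ and $m_1=m_0$, and then use convexity in the length variables plus numerics. That is indeed what the paper does, and your identification of the cross term $2m_1m_2e^{L_2'/2m_1}$ as the new difficulty, with the observation that $m_1=m_0\geqslant2$ tames it because $e^{L_2'/2m_0}\leqslant e^{L_2'/4}$, is exactly the right instinct.

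Two details in your sketch need repair before it becomes a proof. First, the admissible region for $(L_2',L_2'')$ is a triangle, not a box: the constraints are $L_2'\geqslant 0$, $L_2''\geqslant 2m_2\log 2$, and $L_2'+L_2''\leqslant L-2m\log 2$; the corner $L_2'=L-2m_2\log 2$ in your list is infeasible, and the cap $L_2''\leqslant L-L_2'$ should be $L_2''\leqslant L-2m\log 2-L_2'$. Moreover, for the $m_1=1$ branch the paper does not descend to the corners in $L_2'$; it keeps $L_2'\leqslant L-2m\log2-2m_2\log2$, converts $(1+2m_2)e^{L_2'/2}$ into $\tfrac{1+2m_2}{2^{m+m_2}}e^{L/2}$, and then splits on $m\geqslant 7$, $4\leqslant m\leqslant 6$, $m=3$, $m=2$, exploiting the geometric decay of $\tfrac{1+2m_2}{2^{m+m_2}}$. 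You should make the extra subtraction of $2m_2\log 2$ explicit, as it is what keeps the $e^{L/2}$ coefficient small enough. Second, and more importantly, the $m_0=m_2=1$ subcase is not settled by Lemma~\ref{lemma_32}(2) together with a sinh estimate. The paper handles it by asking whether the general-case arc $\gamma_1$ and the special-case arc $\gamma_2$ share the same $N_0(c_i)$ (if not, the mutual intersections vanish and one is back to the situation of Theorem~\ref{m_bigger2_01}), and when they do, by branching on $\ell(c_i)\geqslant 0.25$ versus $\ell(c_i)<0.25$: in the first case $w(\gamma_1)\leqslant L_2/\ell(c_i)$ from Lemma~\ref{lemma_32}(3) gives a linear-in-$L$ bound, and in the second the special arc already consumes $6\log 2$ of $L_2$, so Lemma~\ref{lemma_32}(1) controls the rest. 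That $\ell(c_i)$-cutoff argument is the actual mechanism; Lemma~\ref{lemma_32}(2) alone does not produce the required estimate.
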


\begin{proof}
Recall that $\abs{\Gamma\cap\Gamma}=I(L_2',L_2'',m,m_1,m_2)$, where $L_1=L-L_2-L_2''$ and
\begin{align*}
I(L_2',L_2'',m,m_1,m_2)=\frac12\bracket{\frac{25}{12}L_1+m}^2+(m_1^2+2m_1m_2)e^{\frac{L_2'}{2m_1}}+{L_2''}e^{\frac{L_2''}4}+m^2-m
\end{align*}
For $a>0$ functions $f(x)=xe^{\frac{a}{x}}$ and $f(x)=x^2e^{\frac{a}{x}}$ are convex, hence $I(L_2',L_2'',m,m_1,m_2)$ is a convex function for $m_1\in(1,m_0)$. Hence one of the following holds:
\begin{align}\label{0001}
I\leqslant\frac12\bracket{\frac{25}{12}L_1+m}^2+(1+2m_2)e^{\frac{L_2'}{2}}+{L_2''}e^{\frac{L_2''}4}+m^2-m
\end{align}
\begin{align}\label{0002}
I\leqslant\frac12\bracket{\frac{25}{12}L_1+m}^2+(m_0^2+2m_0m_2)e^{\frac{L_2'}{2m_0}}+{L_2''}e^{\frac{L_2''}4}+m^2-m
\end{align}

\begin{enumerate}
\item
If (\ref{0001}) holds, since for $m_0+1\leqslant j\leqslant m$, $\ell(\gamma_j)\geqslant2\log2$, we have $L_2'\leqslant L_2-2m_2\log2\leqslant L-2m\log2-2m_2\log2$.  then when $m\geqslant7$ we have $m+m_2\geqslant8$, hence
\begin{align*}
 I&\leqslant\frac12\bracket{\frac{25}{12}L+m}^2+\frac{1+2m_2}{2^{m+m_2}}e^{\frac{L}{2}}+{L_2''}e^{\frac{L_2''}4}+m^2-m\\
&\leqslant\frac12\bracket{\frac{25}{12}L+m}^2+\frac{3}{256}e^{\frac{L}{2}}+(L-2m\log2)e^{\frac{L-2m\log2}4}+m^2-m\\
&\leqslant\max\left\{\frac12\bracket{\frac{25}{12}L+7}^2+\frac{3}{256}e^{\frac{L}{2}}+Le^{\frac{L-14\log2}4}+42,\frac12\bracket{\frac{25}{12}L+\frac{L}{2\log2}}^2+\frac{3}{256}e^{\frac{L}{2}}+\bracket{\frac{L}{2\log2}}^2\right\}\\
&\leqslant\max\left\{\frac12\bracket{\frac{25}{12}L+7}^2+\frac{3}{256}e^{\frac{L}{2}}+\frac{L}{8\sqrt2}e^{\frac{L}4}+42,\frac{3}{256}e^{\frac{L}{2}}+4.46L^2\right\}
\end{align*}
The second inequality uses the fact that when $m>m_2\geqslant1$ and $m\geqslant7$, then $\frac{1+2m_2}{2^{m+m_2}}\leqslant\frac3{256}$. The third inequality uses the convexity on $m\in(7,\frac{L}{2\log2})$. When $L>17.7$ we have $4I+2\leqslant e^{\frac{L}2}$, and when $L\leqslant17.7$ we have $4.46L^2+\frac{3}{256}e^{\frac{L}{2}}<1700$ and $\frac12\bracket{\frac{25}{12}L+7}^2+\frac{3}{256}e^{\frac{L}{2}}+\frac{L}{8\sqrt2}e^{\frac{L}4}+42<1700$, contradiction.

When $4\leqslant m\leqslant6$, using the convexity of $L_2\in(0,L-2m\log2)$, and $L_2\leqslant L-2m\log2\leqslant L-8\log2$, we have
\begin{align*}
 I&\leqslant\frac12\bracket{\frac{25}{12}(L-L_2)+m}^2+(1+2m_2)e^{\frac{L_2-2m_2\log2}2}+{L_2}e^{\frac{L_2}4}+m^2-m\\
&\leqslant\max\left\{m^2-m+2m_2+1+\frac12\bracket{\frac{25}{12}L+6}^2, \frac12\bracket{\frac{25\log2}{6}m+m}^2+\frac{3}{32}e^{\frac{L}{2}}+\frac{L}{4}e^{\frac{L}4}+m^2-m\right\}\\
&\leqslant\max\left\{44+\frac12\bracket{\frac{25}{12}L+6}^2, 303+\frac{3}{32}e^{\frac{L}{2}}+\frac{L}{4}e^{\frac{L}4}\right\}
\end{align*}
since $\frac{1+2m_2}{2^{m+m_2}}\leqslant\frac3{32}$ when $m\geqslant4, m_2\geqslant1$. Hence when $L>17.7$ then $4I+2\leqslant e^{\frac{L}2}$, when $L\leqslant17.7$ then $I\leqslant1750$, contradiction.

When $m=3$, $m_1\geqslant1$ implies $\ell(\gamma_1')\geqslant2\log(2+\sqrt3)>2.62$ using Lemma \ref{lemma_32}, hence $L_2''\leqslant L-4\log2-2\log(2+\sqrt3)\in(L-5.41,L-5.4)$. Using the convexity of $L_2\in(0,L-6\log2)$, we have
\begin{align*}
 I&\leqslant\frac12\bracket{\frac{25}{12}(L-L_2)+3}^2+(1+2m_2)e^{\frac{L_2-2m_2\log2}{2}}+{L_2}e^{\frac{L-4\log2-2\log(2+\sqrt3)}4}+6\\
&\leqslant\max\left\{13+\frac12\bracket{\frac{25}{12}L+6}^2,\frac12\bracket{\frac{25\times6\log2}{12}+3}^2+\frac{3}{16}e^{\frac{L}{2}}+\frac{L-6\log2}{2\sqrt{2+\sqrt3}}e^{\frac{L}4}+6\right\}\\
&\leqslant\max\left\{13+\frac12\bracket{\frac{25}{12}L+6}^2, 110+\frac{3}{16}e^{\frac{L}{2}}+\frac{L-6\log2}{3.86}e^{\frac{L}4}\right\}
\end{align*}
since $\frac{1+2m_2}{2^{m+m_2}}\leqslant\frac3{16}$ when $m\geqslant3, m_2\geqslant1$. Hence when $L>17.7$ then $4I+2\leqslant e^{\frac{L}2}$, when $L\leqslant17.7$ then $I\leqslant1750$, contradiction.

When $m_1=m_2=1$, then suppose $\gamma_1$ is of general case and $\gamma_2$ is of special case, and $\gamma_1\cap N_0(c_i)\neq\emptyset$, $\gamma_2\cap N_0(c_{i'})\neq\emptyset$. If $i\neq i'$ then $\abs{\gamma_2\cap\gamma_2}=\abs{\gamma_1\cap\gamma_2}=0$, same as Theorem \ref{m_bigger2_01} to get the proof. Otherwise $i=i'$. 

If $\ell(c_i)\geqslant 0.25$ then $w(\gamma_1)\leqslant\frac{L_2}{0.25}<4L$, and $\abs{\Gamma_2\cap\Gamma_2}\leqslant2w(\gamma_1)+4$, hence 
$$k=\abs{\Gamma\cap\Gamma}\leqslant\frac12\bracket{\frac{25}{12}L+2}^2+8L+4$$ 
When $L>17.7$ we have $4(\frac12\bracket{\frac{25}{12}L+2}^2+8L+4)+2<e^{\frac{L}2}$, when $L\leqslant17.7$ we have $k<1750$, ontradiction.

If $\ell(c_i)<0.25$, we have $\ell(\gamma_2)\geqslant6\log2$, hence $\ell(\gamma_1)\leqslant L-10\log2$. Hence using Lemma \ref{lemma_32} we have $w(\gamma_1)\leqslant\frac1{32}e^{\frac{L}2}$, then
$$k=\abs{\Gamma\cap\Gamma}\leqslant\frac12\bracket{\frac{25}{12}L+2}^2+\frac1{16}e^{\frac{L}2}+4$$
When $L>17.7$ we have $4k+2<e^{\frac{L}2}$, when $L\leqslant17.7$ we have $k<1750$, ontradiction.

\item
If (\ref{0002}) holds and $m_0\geqslant2$, then $\frac{L_2'}{2m_0}\leqslant\frac{L_2'}4$, hence when $L\geqslant16.7$ we have
\begin{align*}
I&\leqslant\frac12\bracket{\frac{25}{12}L_1+m}^2+m^2e^{\frac{L_2'}{2m_0}}+{L_2''}e^{\frac{L_2''}4}+m^2-m\\
&\leqslant\frac12\bracket{\frac{25}{12}L+m}^2+\frac{m^2+L}{2^{\frac{m}2}}e^{\frac{L}4}+m^2-m\\
&\leqslant\max\left\{\frac12\bracket{\frac{25}{12}L+2}^2+(2+\frac{L}2)e^{\frac{L}4}+2, \frac12\bracket{\frac{25}{12}L+\frac{L}{2\log2}}^2+2\bracket{\frac{L}{2\log2}}^2+L\right\}
\end{align*}
The third inequality using the convexity on $m\in(2,\frac{L}{2\log2})$, here when $L\geqslant16.7$ the function $f(x)=\frac{x^2+L}{2^{\frac{x}2}}$ is convex for $x\in(2,+\infty)$. When $L\geqslant17.7$ then $4I+2\leqslant e^{\frac{L}2}$, and when $16.7\leqslant L<17.7$, $I<1750$, contradiction.

When $L<16.7$, since for $x>1$, $\frac{x^2}{2^{\frac{x}2}}<4.51$, then using the convexity on $m\in(2,\frac{L}{2\log2})$ we have
\begin{align*}
I&\leqslant\frac12\bracket{\frac{25}{12}L_1+m}^2+(4.51+\frac{L}{2^{\frac{m}2}})e^{\frac{L}4}+m^2-m\\
&\leqslant\max\left\{\frac12\bracket{\frac{25}{12}L+2}^2+(4.51+\frac{L}2)e^{\frac{L}4}+2, \frac12\bracket{\frac{25}{12}L+\frac{L}{2\log2}}^2+\bracket{\frac{L}{2\log2}}^2+L+4.51e^{\frac{L}4}\right\}\\
&<1750
\end{align*}
a contradiction.

\end{enumerate}

\end{proof}

\subsection{Case $m=1$}

Suppose $\Gamma\cap N_0(c_i)\neq\emptyset$. Let $\gamma=\gamma_1=\Gamma\cap N_0(c_i)$ and $\delta=\delta_1=\Gamma\setminus\gamma$.

\begin{thm}
  Suppose $\Gamma$ is the shortest closed geodesic with $\abs{\Gamma\cap\Gamma}\geqslant k\geqslant1750$, $14<L<2\cosh^{-1}(2k+1)$. If $\ell(\delta)<1.44+2\log2$, then there is a pair of pants $\Sigma_0\subseteq\Sigma$ with geodesic boundaries or punctures, that $\Gamma\subseteq\Sigma_0$, and $\Gamma$ is a corkscrew geodesic, i.e. a geodesic in the homotopy class of a curve consisting of the concatenation of a simple arc and another that winds $k$ times along the boundary, see \cite{Y2801}.
\end{thm}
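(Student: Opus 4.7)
The strategy is to identify a pair-of-pants subsurface $\Sigma_0\subseteq\Sigma$ with geodesic or cuspidal boundaries, containing $\Gamma$, such that $\Gamma$ sits in the corkscrew homotopy class of $\Sigma_0$. The first step is to show that $\delta=\Gamma\setminus N_0(c_i)$ is a simple arc. Since $\delta\subseteq\caln_T$, Lemma 2.8 guarantees that the injectivity radius along $\delta$ is at least $\log\frac{1+\sqrt5}{2}>0.48$. The bound $\ell(\delta)<1.44+2\log 2 = 3\cdot 0.48 + 2\log 2$ is tailored so that the portion of $\delta$ in $\Sigma\setminus N_3(c_i)$ has length at most $3\cdot 0.48$ and decomposes into at most three sub-segments of length less than $0.48$, each lifting isometrically to $\bbh^2$, while the portions of $\delta$ inside the annular collar $N_3(c_i)\setminus N_0(c_i)$ (of width exactly $\log 2$) are embedded arcs on which $d(\cdot,c_i)$ is strictly monotone. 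As in the proof of Theorem 2.7, any two sub-segments of length less than $0.48$ intersect at most once; combined with the embeddedness of the collar portions, a short case analysis would rule out all self-intersections of $\delta$.

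Next, I would build $\Sigma_0$ as a small closed regular neighborhood of $\overline{N_0(c_i)}\cup\delta$ in $\Sigma$. Since $\delta$ is a simple arc with both endpoints on $\partial N_0(c_i)$, this set deformation retracts onto a theta graph consisting of $c_i$ together with the arc $\delta$ attached at two distinct points (after collapsing $N_0(c_i)$ onto $c_i$), and a small regular neighborhood of a theta graph in an orientable surface is a pair of pants with three boundary components. One boundary is $c_i$ itself (or its cusp); the other two, call them $\alpha$ and $\beta$, are simple closed curves in $\Sigma$. By the collar lemma and Lemma 2.3 (disjointness of $N_3(c)$ for distinct $c\in\calx\cup\caly$), after isotoping to geodesic or cuspidal representatives, $c_i$, $\alpha$, and $\beta$ are pairwise disjoint simple closed geodesics or cusps in $\Sigma$ cobounding an embedded pair of pants, which is our $\Sigma_0$.

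Finally, $\Gamma\subseteq\Sigma_0$ by construction, and in $\pi_1(\Sigma_0)=\langle T,S\rangle$ its free homotopy class is $T^w\cdot S$, where $T$ is the peripheral loop around $c_i$, $S$ is the peripheral loop around $\alpha$, and $w$ is the winding number of $\gamma=\Gamma\cap N_0(c_i)$; this is by definition the corkscrew class in the pair of pants $\Sigma_0$. The main obstacle I expect is the first step: rigorously excluding all self-intersections of $\delta$ will require careful use of the length bound $\ell(\delta)<1.44+2\log 2$ together with control over how $\delta$ may re-enter $N_3(c_i)$, and will likely need case-splitting on whether $c_i\in\calx$ or $c_i\in\caly$, and on whether the endpoints of $\delta$ lie on the same component of $\partial N_0(c_i)$ or on different components.
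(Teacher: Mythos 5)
Your proposal hinges on first showing that $\delta$ is a simple arc, but this is both unproved and, more to the point, not in general true under the stated hypotheses — and your plan never invokes the \emph{minimality} of $L$, which is precisely what the paper uses to dispose of the bad non-simple cases. The injectivity-radius bound $\geqslant 0.48$ on the thick part only forbids noncontractible loops of length $<0.96$; a simple loop $v\subseteq\delta$ arising from a self-intersection could easily have length in $(0.96,\,1.44+2\log2)$, so the ``three sub-segments of length $<0.48$'' pigeonhole only caps the number of potential self-intersections of $\delta$ at $\binom{3}{2}=3$, it does not eliminate them. The paper's own proof makes no attempt to show $\delta$ is simple. Instead it splits into cases: (i) $\delta$ simple, in which case the two boundary circles $\delta\cup\epsilon_1$, $\delta\cup\epsilon_2$ are shown to be simple and disjoint from $\Gamma$ so that tightening them yields the pair of pants $\Sigma_0$; (ii) $\delta$ non-simple, where the first simple sub-loop $v\subseteq\delta$ is examined. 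If $v$ is freely homotopic to $c_i$, a monotonicity argument on $d_A(\delta(t),\epsilon)$ along $v$ produces a contradiction. If $v$ is not homotopic to $c_i$ but $\Gamma\setminus v$ is homotopic to a power of $c_i$, then $\Gamma$ is corkscrew by definition. In the remaining sub-case the paper \emph{replaces} $\Gamma$ by a strictly shorter geodesic $\Gamma'$ with at least as many self-intersections (wind $\gamma$ an extra $\sim 30$ times around $c_i$ and replace $\delta$ by the orthogeodesic $\delta_1$, losing $\ell(v)\geqslant 0.96$), contradicting the assumed minimality of $L$. This last step is where the hypothesis ``$\Gamma$ is the shortest closed geodesic with $\geqslant k$ self-intersections'' is essential; your proposal never touches it.

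Two secondary points. First, your regular-neighborhood construction of $\Sigma_0$ needs more care than ``after isotoping to geodesic or cuspidal representatives'': the boundary curves of a regular neighborhood of $\overline{N_0(c_i)}\cup\delta$ are only piecewise smooth and could a priori be contractible, peripheral, or mutually isotopic, and one must verify that the geodesic representatives stay disjoint from $\Gamma$ (the paper does this by ruling out bigons between $\delta$ and $c',c''$). Second, even granting $\delta$ simple, whether $\Gamma$ is corkscrew depends on $\gamma$ being of general case (i.e.\ $\delta$ starting and ending on the same component of $\partial N_0(c_i)$); this follows here because $m=1$ and the self-intersection count forces $w(\gamma)$ large, but your write-up assumes it silently.
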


\begin{figure}[htbp]
\centering
\tikzexternaldisable	
\begin{tikzpicture}[declare function={
	R = 42;
	f(\x) = R+.5-sqrt(R*R-\x*\x);
	ratio = 3;
}]
\def\x{2.5}
\draw [thick] (-5.5*\x,{f(5.5*\x)}) arc ({270-asin(5.5*\x/R)}:270:{R} and {R});
\draw [thick] (-5.5*\x,{-f(5.5*\x)}) arc ({90+asin(5.5*\x/R)}:90:{R} and {R});

\draw [thick] (-14.7,2.8) arc (-180:180:{0.52} and {0.12});
\draw [thick] (-14.4,-2.8) arc (-180:180:{0.4} and {0.12});

\draw [thick] (-14.6,2.7) arc (45:-45:{4} and {4});

\foreach \a/\b in {0/black, -11/red} {
	\draw [color=\b, thick] (\a,{-f(\a)}) arc (-90:90:{f(\a)/ratio} and {f(\a)});
	\draw [dashed, color=\b, thick] (\a,{f(\a)}) arc (90:270:{f(\a)/ratio} and {f(\a)});
}
%\draw [color=black, thick] (4*\x,{-f(4*\x)}) arc (-90:90:{f(4*\x)/ratio} and {f(4*\x)});
%\draw [color=black, thick, dashed] (4*\x,{f(4*\x)}) arc (90:270:{f(4*\x)/ratio} and {f(4*\x)});

%\draw[smooth, color=blue, thick] (-5*\x,0.4*\x)
%  to [start angle=-20, next angle=0] ($(-4*\x,0)+(160:{f(4*\x)/ratio} and {f(4*\x)})$)
% to [next angle=40] ($(-4*\x,0)+(40:{f(4*\x)/ratio} and {f(4*\x)})$) coordinate(x);
%\draw[smooth, color=blue, thick, dashed] (x) 
%	to [start angle=40, next angle=-5] (-9,{f(9)}) coordinate (x);
\draw[smooth, color=blue, thick] (-9, {f(9)}) 
	to [start angle=-5, next angle=280] (-8.3,0) 
	to [next angle=5] (-7.8,{-f(7.8)}) coordinate (x);
\draw[smooth, color=blue, thick, dashed] (x) 
	to [start angle=5, next angle=80] (-7.35,0) 
	to [next angle=-5] (-6.7,{f(6.7)}) coordinate (x);
\draw[smooth, color=blue, thick] (x) 
	to [start angle=-5, next angle=280] (-6.1,0) 
	to [next angle=5] (-5.7,{-f(5.7)}) coordinate (x);
\draw[smooth, color=blue, thick, dashed] (x) 
	to [start angle=5, next angle=80] (-5.25,0) 
	to [next angle=-5] (-4.8,{f(4.8)}) coordinate (x);
\draw[smooth, color=blue, thick] (x) 
	to [start angle=-5, next angle=280] (-4.3,0) 
	to [next angle=5] (-4,{-f(4)}) coordinate (x);
\draw[smooth, color=blue, thick, dashed] (x) 
	to [start angle=5, next angle=80] (-3.65,0) 
	to [next angle=-5] (-3.3,{f(3.3)}) coordinate (x);
\draw[smooth, color=blue, thick] (x) 
	to [start angle=-5, next angle=270] (-2.8,0) 
	to [next angle=185] (-3.3,{-f(3.3)}) coordinate (x);
\draw[smooth, color=blue, thick, dashed] (x) 
	to [start angle=185, next angle=100] (-3.65,0) 
	to [next angle=175] (-4,{f(4)}) coordinate (x);
\draw[smooth, color=blue, thick] (x) 
	to [start angle=175, next angle=260] (-4.3,0) 
	to [next angle=185] (-4.8,{-f(4.8)}) coordinate (x);
\draw[smooth, color=blue, thick, dashed] (x) 
	to [start angle=185, next angle=100] (-5.25,0) 
	to [next angle=175] (-5.7,{f(5.7)}) coordinate (x);
\draw[smooth, color=blue, thick] (x) 
	to [start angle=175, next angle=260] (-6.1,0) 
	to [next angle=185] (-6.7,{-f(6.7)}) coordinate (x);
\draw[smooth, color=blue, thick, dashed] (x) 
	to [start angle=185, next angle=100] (-7.35,0) 
	to [next angle=175] (-7.8,{f(7.8)}) coordinate (x);
\draw[smooth, color=blue, thick] (x) 
	to [start angle=175, next angle=260] (-8.3,0)
	to [next angle=185] (-9,{-f(9)}) coordinate (x);
\draw[smooth, color=blue, thick, dashed] (x) 
	to [start angle=185, next angle=105] (-10,0)
	to [next angle=160] (-11,{f(11)}) coordinate (x);
\draw[smooth, color=blue, thick] (x) 
	to [start angle=160, next angle=280] (-13.5,0.5) coordinate (x);
\draw[smooth, color=blue, thick, dashed] (x) 
	to [start angle=320, next angle=340] (-9, {f(9)}) coordinate (x);

%\draw[smooth, color=blue, thick, dashed] (x) 
%	to [start angle=185, next angle=140] ($(-4*\x,0)+(-40:{f(4*\x)/ratio} and {f(4*\x)})$) coordinate(x);
%\draw[smooth, color=blue, thick] (x) 
%	to [start angle=140, next angle=180] ($(-4*\x,0)+(200:{f(4*\x)/ratio} and {f(4*\x)})$)
%	to [next angle=200] (-5*\x,-0.4*\x);

%\draw [thick, decoration={
%	brace, raise=0.1cm
%	}, decorate
%] (-4*\x,{f(4*\x)}) -- (0,{f(4*\x)}) 
%	node [pos=0.5,anchor=south,yshift=0.1cm] {$N(c_i)$}; 
  			
\path ({f(0)/ratio},0) node[circle, inner sep=1pt, label={[label distance=.4em, anchor=center]0:$c_i$}]{};

\path (-11, {f(11)}) node[circle, fill, inner sep=1pt, label={[shift={(90:0.45)}, anchor=center]{$P$}}]{};
\path (-11.62, 0.6) node[circle, fill, inner sep=1pt, label={[shift={(315:0.45)}, anchor=center]{$Q$}}]{};
\path (-11.2, 1.3) node[circle, inner sep=1pt, label={[shift={(140:0.45)}, anchor=center]{$\epsilon_1$}}]{};
\path (-10.3, -1.3) node[circle, inner sep=1pt, label={[shift={(140:0.45)}, anchor=center]{$\epsilon_2$}}]{};
\path (-12.5, 1.7) node[circle, inner sep=1pt, label={[shift={(140:0.45)}, anchor=center]{$\delta$}}]{};
\path (-14.3, 2.8) node[circle, inner sep=1pt, label={[shift={(90:0.45)}, anchor=center]{$c'$}}]{};
\path (-14.3, -2.8) node[circle, inner sep=1pt, label={[shift={(270:0.45)}, anchor=center]{$c''$}}]{};

\end{tikzpicture}
\caption{\label{fig:corkscrew}
A corkscrew geodesic: the blue curve in the figure}
\end{figure}
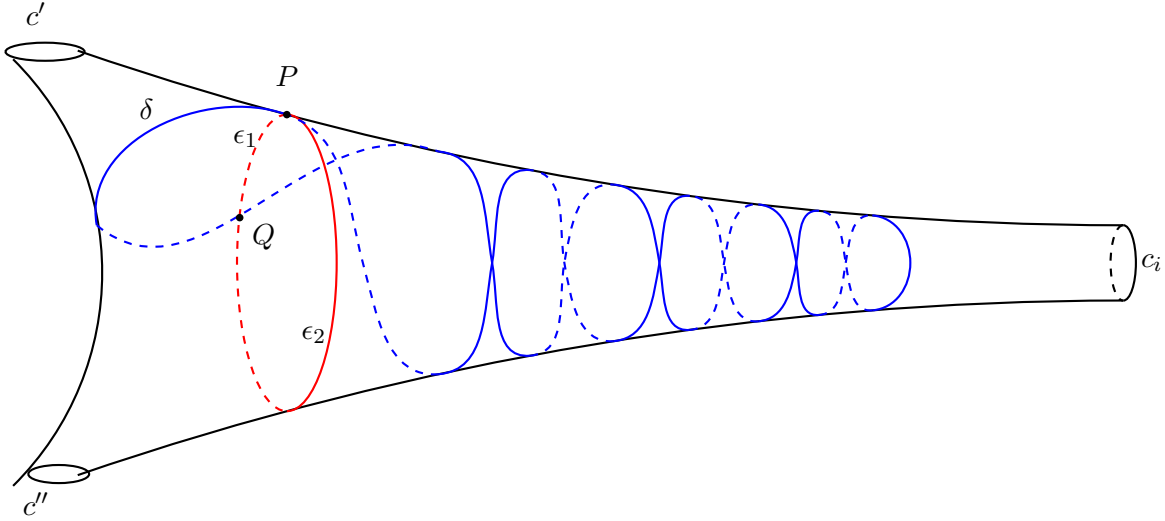

\begin{proof}

  The proof is similar as the proof of \cite[Theorem 1.1(2)]{Y2801}. Let $P,Q\in\Gamma\cap\partial N_0(c_i)$ are endpoints of arc $\gamma$. Then $\gamma$ and $\delta$ both have $P,Q$ as endpoints. Suppose $\epsilon$ is one of two components of $\partial N_0(c_i)$ containing $P,Q$, then $\epsilon$ is a closed curve, is divided to 2 curves $\epsilon_1\cup\epsilon_2$ and $P,Q$ are both endpoints of $\epsilon_1$ and $\epsilon_2$ as in Figure \ref{fig:corkscrew}.

\begin{enumerate}
\item

If $\delta$ has no self-intersection, then both $\delta\cup\epsilon_1$ and $\delta\cup\epsilon_2$ are closed curves with no self-intersection, hence homotopic to closed geodesics or cusps $c'$ and $c''$, and $c_i,c',c''$ is a boundary of a pair of pants $\Sigma_0$. 

We claim that $\Gamma\subseteq\Sigma_0$. If this fails, since clearly $\delta\cup\epsilon_1$ is freely homotopic to a simple closed curve contained in $\Sigma_0$, then if $\delta\cup c'\neq\emptyset$ then $\delta\cup c'$ must create bigons, but both $\delta$ and $c'$ are geodesics, a contradiction. Similarly $\delta\cup c''=\emptyset$. Hence the claim holds.

\item
If $\delta$ has self-intersection, choose a parametrization $\delta:[0,\ell(\delta)]\rightarrow\Sigma$ of $\delta$ and $\delta(0)=P,\delta(\ell(\delta))=Q$. Let $t_2$ be the supremum of all $t$ such that the restriction $\delta|_{[0,t]}$ is a simple arc, then there exists a unique $t_1\in[0,t_2)$ with $\delta(t_1)=\delta(t_2)$. hence $\delta|_{[t_1,t_2]}$ is a simple loop in $\delta$ and denote it by $v$. The simple loop $v$ is noncontractible so we only need to consider whether it is freely homotopic to $c_i$ or not.

\item
If $v$ is freely homotopic to $c_i$, since $v\cap N_0(c_i)=\emptyset$, $v\cup c_i$ is the boundary of an open annulus $A\subseteq\Sigma$. Since $\delta|_{[t_1,t_2]}$ is a closed curve which is geodesic except $\delta(t_1)=\delta(t_2)$, $d(\delta(t),\epsilon)$ first decreases and then increases for $t\in[t_1,t_2]$. There exists $t_3$ such that $t_3$ is the infimum of $t$ such that $\delta(t)\notin A$. Then $\delta(t_3)\in\delta|_{[t_1,t_2]}$ and $t_3\leqslant t_1$. If $t_3<t_1$ then there exists unique $t_4\in[t_1,t_2)$ such that $\delta(t_4)=\delta(t_3)$, contradicts the definition of $t_2$. Hence $t_3=t_1$, and $\delta|_{[0,t_2]}\subseteq\overline{A}$. Hence the function $d_A(\delta(t),\epsilon)$ attains its maximum on $t=t_1$ in $[0,t_2]$, where $d_A$ means the distance function in $\overline{A}$. 

On the other hand, since $\delta|_{[t_1,t_2]}$ is a closed curve which is geodesic except $\delta(t_1)=\delta(t_2)$, and $d_A(\delta(t),\epsilon)$ first decreases and then increases for $t\in[t_1,t_2]$, hence $\frac{d}{dt}d_A(\delta(t),\epsilon)|_{t=t_1}<0$, a contradiction.

\item

If $v$ is not freely homotopic to $c_i$, define closed curve $\chi=\Gamma\setminus v$, if $\chi$ is freely homotopic to a power of $c_i$ (or a horocycle of $c_i$), then $\Gamma$ is freely homotopic to a corkscrew geodesic and hence the theorem holds.

\item
If $v$ is not freely homotopic to $c_i$ and $\chi$ is not freely homotopic to a power of $c_i$, define $x'\in\gamma$ as the unique point satisfying $d(x',\epsilon)=\max_{x\in\gamma}d(x,\epsilon)$. Since $\abs{\delta\cap\delta}\leqslant\frac12\bracket{\frac{1.44+2\log2}{0.48}+1}^2<25$, hence $k_0:=\abs{\gamma\cap\gamma}\geqslant k-25$, suppose all the self-intersection points are $x_1,...,x_{k_0}\in\gamma\cap\gamma$, and $d(x_1,\epsilon)<...<d(x_{k_0},\epsilon)$. For $1\leqslant j\leqslant k_0-1$, there exists geodesic segment $\gamma_j^1,\gamma_j^2\subseteq\gamma$ connecting $x_j,x_{j+1}$, suppose $\gamma_j^0=\gamma_j^1\cup\gamma_j^2$ be a nontrivial closed curve. Clearly 
$$\ell(\gamma_1^0)+...+\ell(\gamma_{k_0-1}^0)\leqslant\ell(\gamma)<L$$
Suppose $r(x_j),r(x')$ are the injective radius of $x_j,x'$, since when $x\in N_0(c_i)$, $r(x)=r'(d(x,\partial N_0(c_i)))$ is a decreasing function on $d(x,\partial N_0(c_i))$, then when $k\geqslant1750$ we have
$$\ell(c_i)\leqslant 2r(x')\leqslant\min_{1\leqslant j\leqslant k_0-1}2r(x_j)\leqslant\min_{1\leqslant j\leqslant k_0-1}\ell(\gamma_j^0)\leqslant\frac{L}{k_0-1}\leqslant\frac{2\cosh^{-1}(2k+1)}{k-50}<0.011$$
Here if $c_i$ is a cusp, $\ell(c_i)=0$. Hence 
$$\ell(\epsilon)=\bracket{\frac{\ell(c_i)}2+\frac2{\ell(c_i)}}\cdot\frac{\ell(c_i)}2<1.01$$

Next we define another shorter closed geodesic $\Gamma''$ with more self-intersection to get a contradiction. Let $\delta_1$ be the shortest orthogonal geodesic from $\epsilon$ to itself, clearly $\delta_1$ has no self-intersection, its endpoints are $X,Y\in\epsilon$. We can choose a curve $\gamma''$ homotopic to $c_i$ of length less than $0.011$ such that $x'\in\gamma''$, define $k_1\gamma''$ is the multicurve of $\gamma''$ of multiplicity $k_1$. Since $\chi$ is not freely homotopic to a multiple of $c_i$ we have $\ell(\delta_1)\leqslant\ell(\delta)-\ell(v)\leqslant\ell(\delta)-2\times0.48$. Define the geodesic $\gamma_0$ with endpoints $P,Q$ in the homotopy class of $\gamma\cup30\gamma''$ (the curve obtained by following $\gamma$ from $Q$ to $x'$, then choose such orientation of $\gamma''$ and winding around $\gamma''$ for 30 times, finally following $\gamma$ from $x'$ to $P$). Clearly the winding number $w(\gamma_0)=w(\gamma)+30$.

Define the geodesic $\gamma_0'\subseteq N_0(c_i)$, with endpoints $X,Y$, such that $\gamma_0'$ winding around $c_i$ with winding number $w(\gamma_0')\in[w(\gamma)+28,w(\gamma)+29)$. Since $w(\gamma_0')<w(\gamma_0)$ we have $\ell(\gamma_0')<\ell(\gamma_0)$. Define $\Gamma'$ is the closed geodesic freely homotopic to the closed curve $\gamma_0'\cup\delta_1$.

 Then $\abs{\Gamma'\cap\Gamma'}\geqslant\abs{\Gamma\cap\Gamma}-25+28>k$, but
$$\ell(\Gamma')\leqslant\ell(\delta_1)+\ell(\gamma_0')\leqslant\ell(\delta)-\ell(v)+\ell(\gamma_0)<\ell(\delta)-0.96+\ell(\gamma)+0.33<\ell(\Gamma)$$
contradicting with the minimality of the length $L$.

\end{enumerate}

\end{proof}

Since the minimal length of all the corkscrew geodesics on pair of pants are computed in \cite{B2803} and \cite{B2795}, we have:

\begin{cor}
 If $L>14$, $\ell(\delta)<1.44+2\log2$, $k\geqslant1750$ then $L\geqslant2\cosh^{-1}(2k+1)$ and the equality holds if $\Gamma$ is a corkscrew geodesic on a thrice-punctured sphere.
\end{cor}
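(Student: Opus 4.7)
The plan is to run Theorem 4.4 in contrapositive form and then cite the explicit minimum-length computation for corkscrew geodesics on pairs of pants. Suppose for contradiction that $L<2\cosh^{-1}(2k+1)$. Combined with the hypotheses $L>14$, $\ell(\delta)<1.44+2\log 2$, $k\geqslant 1750$, and the standing assumption $\abs{\Gamma\cap\Gamma}\geqslant k$ with $\Gamma$ length-minimizing (so in particular $m=1$, since $m=0$ is ruled out by the remark after Theorem 4.3 and $m\geqslant 2$ by Theorem 4.4 on multiple arcs in the thin part), Theorem 4.4 applies and forces $\Gamma$ to lie inside an embedded pair of pants $\Sigma_0\subseteq\Sigma$ with boundary curves (or cusps) $c_i,c',c''$, and moreover $\Gamma$ is freely homotopic to a corkscrew curve winding along $c_i$.

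Since $\abs{\Gamma\cap\Gamma}=k$ exactly in the length-minimizing situation (equality can be checked directly against the corkscrew self-intersection count), I then invoke the explicit formula for the length of corkscrew geodesics due to Baribaud \cite{B2795} and Basmajian \cite{B2803}. Their computation writes the length of a corkscrew of winding $k$ on a pair of pants with boundary lengths $(\ell_i,\ell',\ell'')$ as a monotone function of the three boundary lengths via the trace formula in $PSL(2,\bbr)$; the minimum over all pairs of pants is achieved in the limit $\ell_i,\ell',\ell''\to 0$, i.e. on the thrice-punctured sphere, and equals exactly $2\cosh^{-1}(1+2k)$. This yields $L=\ell(\Gamma)\geqslant 2\cosh^{-1}(2k+1)$, contradicting the assumption.

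For the equality statement, the corkscrew geodesic on the thrice-punctured sphere with winding number $k$ has length exactly $2\cosh^{-1}(1+2k)$ (a direct trace computation in the Farey/Fricke coordinates of the thrice-punctured sphere), and its self-intersection number is $k$, so the infimum is realized.

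The main obstacle is that the reduction only gets us as far as ``some pair of pants''; the actual inequality then rests entirely on the Baribaud--Basmajian length formula and the verification that this formula is minimized in the triply-cusped limit. Everything else in the corollary is bookkeeping: ensuring that the three excluded regimes ($m=0$, $m\geqslant 2$, and $m=1$ with $\ell(\delta)\geqslant 1.44+2\log 2$) are already handled by Theorems \ref{m_bigger2_01}--\ref{m_bigger2_03} and Theorem \ref{m_bigger2}, so that the corkscrew conclusion of Theorem 4.4 is the only remaining case.
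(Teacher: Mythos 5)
Your proof is correct and takes essentially the same route as the paper: the paper's own argument consists of the one-sentence observation preceding the corollary that the minimal length of corkscrew geodesics on pairs of pants is computed in Baribaud \cite{B2795} and Basmajian \cite{B2803}, applied to the conclusion of the preceding theorem (which, under the stated hypotheses plus the contradiction assumption $L<2\cosh^{-1}(2k+1)$, forces $\Gamma$ into a pair of pants as a corkscrew). Your expanded contradiction argument, the appeal to monotonicity of the corkscrew length in the boundary lengths, and the realization of the minimum at the thrice-punctured sphere are exactly what the paper is implicitly invoking; note only that the parenthetical claim that $\abs{\Gamma\cap\Gamma}=k$ exactly is unnecessary — the corkscrew length bound is monotone in the winding/self-intersection count, so $\abs{\Gamma\cap\Gamma}\geqslant k$ already gives $L\geqslant 2\cosh^{-1}(2k+1)$ without having to argue equality.
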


\begin{thm}
 If $L\leqslant14$ or $\ell(\delta)\geqslant1.44+2\log2$, and $k>1750$, then $L\geqslant2\cosh^{-1}(2k+1)$.
\end{thm}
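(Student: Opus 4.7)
Since $m=1$, $\Gamma$ decomposes as $\delta\cup\gamma$ with $\delta$ the single thick arc of length $L_1=\ell(\delta)$ and $\gamma=\Gamma\cap N_0(c_i)$ the thin arc of length $L_2=\ell(\gamma)$, so $L=L_1+L_2$. Theorem 2.7 bounds $\abs{\delta\cap\delta}\leq f_2(L_1)$ where $f_2(x):=\tfrac12\bigl(\tfrac{25}{12}x+1\bigr)^2$, and Lemma \ref{lemma_31} combined with Lemma \ref{lemma_32} gives $\abs{\gamma\cap\gamma}\leq w(\gamma)\leq 2\sinh(L_2/2)$ in the general case (trivially so in the special case, where $\abs{\gamma\cap\gamma}=0$). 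Hence
\[
k \leq f_2(L_1) + 2\sinh(L_2/2). \qquad (\dagger)
\]
The plan is to handle the two clauses of the hypothesis separately.

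\emph{Case $L\leq 14$.} The right-hand side of $(\dagger)$, viewed as a strictly convex function of $L_1\in[0,L]$ with $L_2=L-L_1$, attains its maximum at an endpoint and is bounded by $\max\{f_2(14),\,\tfrac12+2\sinh 7\}<1098$, contradicting $k>1750$. So this branch of the hypothesis is vacuous.

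\emph{Case $L>14$ and $L_1\geq 1.44+2\log 2$.} Assume for contradiction $L<2\cosh^{-1}(2k+1)$, equivalently $e^{L/2}+e^{-L/2}<4k+2$. Substituting $(\dagger)$ and dropping the non-negative tails yields
\[
e^{L_2/2}(e^{L_1/2}-4) < 4 f_2(L_1) + 2, \qquad (*)
\]
where $e^{L_1/2}-4\geq 4e^{0.72}-4>0.1$ by the hypothesis on $L_1$. I split at $L_1=15.5$. When $L_1\leq 15.5$, $f_2(L_1)\leq f_2(15.5)<560<1750$, so $(\dagger)$ together with $k>1750$ yields $e^{L_2/2}>1750-f_2(L_1)$; substituting into $(*)$ and rearranging reduces the contradiction assumption to $\phi(L_1)>1750$, where $\phi(x):=f_2(x)+7002\,e^{-x/2}$ is a sum of convex functions with endpoint values $\phi(1.44+2\log 2)<1730$ and $\phi(15.5)<560$. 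Convexity then gives $\phi\leq 1730<1750$ throughout the interval, a contradiction. When $L_1>15.5$, $(*)$ alone gives $e^{L_2/2}<R(L_1):=\frac{4f_2(L_1)+2}{e^{L_1/2}-4}$; verifying $R(15.5)<1$ (since $4f_2(15.5)+2<e^{7.75}-4$) and, by comparing the logarithmic derivatives of numerator and denominator, that $R$ is decreasing on $[15.5,\infty)$, forces $e^{L_2/2}<1$, hence $L_2<0$, another contradiction.

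The most delicate step is the numerical check $\phi(1.44+2\log 2)\approx 1729<1750$: the quantity sits only about $21$ below the critical value, which explains why the hypothesis demands precisely $\ell(\delta)\geq 1.44+2\log 2$ and why the universal constant $k>1750$ in the main theorem cannot be substantially reduced by this chain of estimates.
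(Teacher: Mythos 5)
Your proof is correct, and it takes a genuinely different (though kindred) route from the paper's. The paper works directly in $L$: it applies convexity of $l\mapsto 1+2\sinh\tfrac{l}{2}+\tfrac12\bigl(\tfrac{25}{12}(L-l)+1\bigr)^2$ on $[0,\,L-1.44-2\log 2]$ to deduce $\abs{\Gamma\cap\Gamma}\leqslant\max\{25+0.244\,e^{L/2},\ 2.2L^2+2.1L+2\}$, notes that $k>1750$ forces $L>17.7$ in either branch, and then checks that $4\cdot(\text{either branch})+2<e^{L/2}$, giving $\cosh(L/2)>2k+1$ directly. You instead rearrange $\cosh(L/2)<2k+1$ together with $(\dagger)$ into $e^{L_2/2}\bigl(e^{L_1/2}-4\bigr)<4f_2(L_1)+2$ and split on $L_1\lessgtr15.5$, with a convexity argument for $\phi$ on the small-$L_1$ regime and monotonicity of $R$ on the large-$L_1$ regime. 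Your version makes the role of the threshold $\ell(\delta)\geqslant1.44+2\log2$ fully transparent: it is precisely what makes the coefficient $e^{L_1/2}-4$ positive, which is the engine of the argument, and your closing remark that $\phi(1.44+2\log2)\approx1728$ sits barely below $1750$ correctly identifies where the estimate is tight. The paper's version is shorter and avoids the auxiliary functions $\phi$, $R$; yours isolates the mechanism more cleanly. One typo worth fixing: from $L_1\geqslant1.44+2\log2$ one gets $e^{L_1/2}\geqslant e^{0.72+\log2}=2e^{0.72}\approx4.108$, so the bound should read $e^{L_1/2}-4\geqslant 2e^{0.72}-4>0.1$, not $4e^{0.72}-4$; the stated numerical conclusion $>0.1$ is nevertheless correct.
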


\begin{proof}
 If $L>14$ or $\ell(\delta)\geqslant1.44+2\log2$, as we discussed before, let $l=\ell(\gamma)$, $L-l=\ell(\delta)\geqslant1.44+2\log2$, using Lemma \ref{lemma_32} we have
\begin{align*}
\abs{\Gamma\cap\Gamma}&\leqslant1+2\sinh\frac{l}2+\frac12\bracket{\frac{25}{12}(L-l)+1}^2\\
&\leqslant\max\left\{1+2\sinh\bracket{\frac{L}2-0.72-\log2}+\frac12\bracket{\frac{25}{12}\times(1.44+2\log2)+1}^2, 1+\frac12\bracket{\frac{25}{12}L+1}^2\right\}\\
&\leqslant\max\left\{25+0.244e^{\frac{L}2}, 2.2L^2+2.1L+2\right\}
\end{align*}
The second inequality uses the fact that $1+\sinh\frac{l}2+\frac12\bracket{\frac{25}{12}(L-l)+1}^2$ is a convex function on $l\in[0,L-1.44-2\log2]$. When $k>1750$, then $25+0.244e^{\frac{L}2}>1750$ or $2.2L^2+2.1L+2>1750$, both have $L>17.7$. But when $L>17.7$, $4(25+0.244e^{\frac{L}2})+2<e^{\frac{L}2}$ and $4(2.2L^2+2.1L+2)+2<e^{\frac{L}2}$, contradiction.

If $L\leqslant14$, then we get a contradiction since
$$\abs{\Gamma\cap\Gamma}\leqslant1+2\sinh\frac{L}2+\frac12\bracket{\frac{25}{12}L+1}^2<1750$$

\end{proof}

Hence we finished the proof of Theorem \ref{thm:main}.

\end{document}